\renewcommand{\leq}{\leqslant} 
\renewcommand{\geq}{\geqslant}
\renewcommand{\epsilon}{\varepsilon} 
\renewcommand{\hat}{\widehat}
\renewcommand{\bar}{\overline}
\renewcommand{\tilde}{\widetilde}
\def\1{\mbox{1\hspace{-.35em}1}}
\def\R{\mathbb{R}}
\def\N{\mathbb{N}}
\def\P{\mathbb{P}}
\def\E{\mathbb{E}}
\def\L{\mathbb{L}}
\def\C{\mathbb{C}}
\def\Y{\mathbb{Y}}
\def\Z{\mathbb{Z}}
\def\H{\mathbb{H}}
\def\rmd{\mathrm d}
\def\rme{\mathrm e}
\def\rmi{\mathrm i}
\def\GIG{\bG\centerdot\tilde \bI\centerdot \bG}
\def\GIGd{\bG\centerdot\mathcal I^{\bdexp}\centerdot \bG}
\def\GIGG{\bG\centerdot\mathcal I^{\bGexp}\centerdot \bG}
\def\bdexp{\mbox{\footnotesize $\bd$}}
\def\bDexp{\mbox{\footnotesize $\bD$}}
\def\bGexp{\mbox{\footnotesize $\bG$}}
\newcommand{\mV}{\mbox{${\mathcal V}$}}
\newcommand{\mW}{\mbox{${\mathcal W}$}}
\newcommand{\mY}{\mbox{${\mathcal Y}$}}
\newcommand{\mZ}{\mbox{${\mathcal Z}$}}
\newcommand{\mS}{\mbox{${\mathcal S}$}}
\newcommand{\mR}{\mbox{${\mathcal R}$}}
\newcommand{\bmV}{\mbox{$\boldsymbol{\mathcal V}$}}
\newcommand{\bmW}{\mbox{$\boldsymbol{\mathcal W}$}}
\newcommand{\bmY}{\mbox{$\boldsymbol{\mathcal Y}$}}
\newcommand{\bmZ}{\mbox{$\boldsymbol{\mathcal Z}$}}
\newcommand{\bmS}{\mbox{$\boldsymbol{\mathcal S}$}}
\newcommand{\bmR}{\mbox{$\boldsymbol{\mathcal R}$}}
\newcommand{\bff}{\mbox{${\boldsymbol{f}}$}}
\newcommand{\bd}{\mbox{${\mathbf d}$}}
\newcommand{\bV}{\mbox{${\mathbf V}$}}
\newcommand{\bv}{\mbox{$\boldsymbol{\upsilon}$}}
\newcommand{\bT}{\mbox{${\mathbf T}$}}
\newcommand{\bS}{\mbox{${\mathbf S}$}}
\newcommand{\bg}{\mbox{${\mathbf g}$}}
\newcommand{\bi}{\mbox{${\mathbf i}$}}
\newcommand{\bI}{\mbox{${\mathbf I}$}}
\newcommand{\bA}{\mbox{${\mathbf A}$}}
\newcommand{\bB}{\mbox{${\mathbf B}$}}
\newcommand{\bD}{\mbox{${\mathbf D}$}}
\newcommand{\bQ}{\mbox{${\mathbf Q}$}}
\newcommand{\bM}{\mbox{${\mathbf M}$}}
\newcommand{\bG}{\mbox{${\mathbf G}$}}
\newcommand{\bY}{\mbox{${\mathbf Y}$}}
\newcommand{\bX}{\mbox{${\mathbf X}$}}
\newcommand{\bZ}{\mbox{${\mathbf Z}$}}
\newcommand{\bW}{\mbox{${\mathbf W}$}}
\newcommand{\bomega}{\mbox{\boldmath$\omega$}}
\newcommand{\bOmega}{\mbox{\boldmath$\Omega$}}
\newcommand{\bLambda}{\mbox{\boldmath$\Lambda$}}
\newcommand{\bGamma}{\mbox{\boldmath$\Gamma$}}
\newcommand{\bSigma}{\mbox{\boldmath$\Sigma$}}
\newcommand{\bUpsilon}{\mbox{\boldmath$\Upsilon$}}
\newcommand{\bnu}{\mbox{\boldmath$\nu$}}
\newcommand{\bepsilon}{\mbox{\boldmath$\epsilon$}}
\newcommand{\bTheta}{\mbox{\boldmath$\Theta$}}
\newcommand{\bPhi}{\mbox{\boldmath$\Phi$}}
\newcommand{\barG}{\mbox{$\tilde{G}$}}
\newcommand{\bbarG}{\mbox{\boldmath$\tilde{G}$}}
\newcommand{\barR}{\mbox{$\tilde{R}$}}
\newcommand{\mj}{\mbox{$<\mathcal J>$}}
\newcommand{\argmin}{\displaystyle \mathop{\mbox{argmin}}}
\newcommand{\tend}{\displaystyle \mathop{\;\longrightarrow\;}}
\newcommand{\BigO}{O}
\newcommand{\abs}[1]{\left\lvert #1 \right\rvert}
\newcommand{\norm}[1]{\left\lVert #1 \right\rVert}
\DeclareMathOperator{\var}{Var}
\DeclareMathOperator{\cov}{Cov}
\DeclareMathOperator{\diag}{diag}
\DeclareMathOperator{\sign}{sgn}
\DeclareMathOperator{\trace}{trace}
\DeclareMathOperator{\Vect}{vec}
\newcommand{\vect}[1]{\Vect\!\left( #1 \right)}
\newcommand{\pushright}[1]{\ifmeasuring@#1\else\omit\hfill$\displaystyle#1$\fi\ignorespaces}
\newcommand{\pushleft}[1]{\ifmeasuring@#1\else\omit$\displaystyle#1$\hfill\fi\ignorespaces}
\theoremstyle{plain}
\newtheorem{thm}{Theorem}
\newtheorem{lem}[thm]{Lemma}
\newtheorem{prop}[thm]{Proposition}
\newtheorem{cor}[thm]{Corollary}
\theoremstyle{definition}
\theoremstyle{remark}
\newtheorem{rem}{Remark}
\journal{Stochastic Processes and Applications}
\begin{document}

\begin{frontmatter}

\title{Asymptotic normality of wavelet covariances and multivariate wavelet Whittle estimators}

\author{Ir\`ene Gannaz}
\address{Univ Lyon, INSA Lyon, UJM, UCBL, ECL, ICJ, UMR5208, 69621 Villeurbanne, France}
\ead{irene.gannaz@insa-lyon.fr}

\begin{abstract}
Multivariate processes with long-range dependence properties can be encountered in many fields of application. Two fundamental characteristics in such frameworks are long-range dependence parameters and correlations between component time series. We consider multivariate long-range dependent linear processes, not necessarily Gaussian. We show that the covariances between the wavelet coefficients in this setting are asymptotically Gaussian. We also study the asymptotic distributions of the estimators of the long-range dependence parameter and the long-run covariance by a wavelet-based Whittle procedure. We prove the asymptotic normality of the estimators, and we provide an explicit expression for the asymptotic covariances. An empirical illustration of this result is proposed on a real dataset of rat brain connectivity.
\end{abstract}

\begin{keyword}
Multivariate processes \sep long-range dependence \sep covariance \sep wavelets \sep asymptotic normality \sep cerebral connectivity
\end{keyword}

\end{frontmatter}

\mathtoolsset{showonlyrefs}

\setlength{\parindent}{0pt}
\setlength{\parskip}{\baselineskip}

\section{Introduction}

Univariate long-range dependent processes are processes with an autocovariance function with a power-law decay or equivalently a spectral density diverging at the zero frequency with a power-law rate. Univariate long-range dependence (LRD) has encountered much interest and is used widely in applications. See, for example, \cite{PercivalWalden,Beran2016,Pipiras2017} and references therein.

Data is often recorded by multiple sensors where multivariate modeling brings better representation and can increase the consistency of inference.  Multivariate processes with LRD properties are found in a wide range of applications, such as geoscience \citep{WhitcherJensen00}, finance \citep{Gencay}, or neuroscience \citep{AchardGannaz}. Extensions of univariate LRD models to multivariate frameworks were initiated by \cite{Robinson95b}, and this topic has met great interest over the last decades. Several models have been proposed, such as multivariate autoregressive
fractionally integrated moving average (ARFIMA) models \citep{Lobato97,SelaHurvich2008,KechagiasPipiras}. In \cite{KechagiasPipiras}, Kechagias and Pipiras provide properties in the time and spectral domains of linear representations of multivariate long-range dependent processes. A nonlinear example of multivariate long-range dependent processes was also proposed by Didier and Pipiras in \cite{DidierPipiras}, where a multivariate Brownian motion was defined.

The specificity of the multivariate setting is that, in addition to LRD properties, it helps identify the correlation structure between the processes. The coupling between each component is characterized by the long-run covariance matrix \cite{Robinson95b}. 
 A key point for real data application is the development of statistical tests on LRD parameters and on long-run covariance. For example, as illustrated here on a real data example, these characteristics are intrinsically related to the brain activity recordings in neuroscience. Some work has shown that their distributions can be modified by  pathologies (see {\it e.g.} \cite{maxim.2005.1} and \cite{Achard_coma}). A statistical test may be useful to rigorously assess such observations.

We focus on semiparametric estimators, which are more robust to model misspecification~\citep{Robinson95b}. A common estimation procedure in this framework is Whittle estimation, which is based on a Fourier decomposition of the processes \citep{Lobato99, Shimotsu07, Nielsen11}. The authors prove the consistency and the asymptotic distribution of their estimators. More recently, the asymptotic normality of estimators has been provided by Baek et al.~\cite{Baek2020}, in prolongation of \cite{Robinson08}, in a multivariate framework where components can be co-integrated. An estimation with a Lasso penalty is also proposed in this setting by Pipiras et al. \cite{pipiras_lasso}, and D\"uker and Pipiras \cite{DukerPipiras} establish the asymptotic normality of this procedure.

As an alternative to Fourier, wavelet-based estimators can be used. Wavelet transforms are interesting especially because wavelet analysis performs an implicit differentiation, which offers the possibility to consider non-stationary processes. Wavelet-based Whittle estimation was introduced by Moulines et al. in \cite{Moulines08Whittle} for univariate long-range dependent time series. It was generalized to the multivariate setting by Achard and Gannaz \cite{AchardGannaz}. Estimators are consistent and have theoretical rates comparable to Fourier-based estimators. The numerical performances of wavelet-based and Fourier-based estimators are also similar, as illustrated in \cite{AchardGannaz_code}.

This paper considers linear processes, not necessarily Gaussian, with long-range dependence. The two main characteristics we are interested in are the long-range dependence parameters, which measure the LRD behavior of the processes, and long-run covariance, which captures the dependence structure between the components. Long-range dependence parameters and long-run covariance are estimated jointly with the procedure described in \cite{AchardGannaz}. The aim of this paper is to establish the asymptotic normality of these estimators. Roueff and Taqqu \cite{Roueff09asymptotic} prove that this asymptotic normality is acquired in the univariate setting, but no result exists for wavelet-based estimation in the multivariate setting. 

We first state that the sample covariances between the wavelet coefficients at a given scale are asymptotically Gaussian. We recover the univariate results of \cite{Roueff09asymptotic} but also provide the behavior of sample wavelet covariance between two processes, with possible different LRD parameters.

Asymptotic distributions of the estimators of long-range dependence parameters and long-run covariance of \cite{AchardGannaz} are then obtained. The results highlight that multivariate estimation of LRD parameters decreases variance with respect to an estimation of LRD parameters component by component. Long-run covariance can be estimated by the sample wavelet correlation at a unique scale or by wavelet-based Whittle procedure, which aggregates the sample wavelet covariance at numerous scales. We highlight that, not surprisingly, Whittle estimation converges at a better rate. We also prove the asymptotic normality of the Whittle estimator. Moreover, test procedures can be built from the asymptotic normality theorems, for LRD parameters, and for long-run covariance.
 
The paper is organized as follows. Section \ref{sec:model} introduces the specific framework of our study. The LRD properties of the processes are described, and assumptions on a linear representation of the time series are given. The properties of the wavelet representation of the processes are also synthesized. The asymptotic behavior of the covariance between wavelet coefficients is provided in Section~\ref{sec:cov}. Wavelet-based Whittle estimation is considered in Section~\ref{sec:d}. The asymptotic normality of the estimators is established. Section~\ref{sec:real} illustrates the asymptotic normality of the estimators on a real data example, with the study of functional magnetic resonance images (fMRI) of a dead rat and a live rat.

\section{The semiparametric multivariate long-range dependence framework} 
\label{sec:model}

Let $\bX=\{X_{a}(k),k\in\Z, a=1,\dots,p\}$ be a multivariate stochastic process. We consider a process $\bX$ with long-range dependence parameters $\bd=(d_1,d_2,\ldots,d_p)$. The stationary framework corresponds to LRD parameters $d_i\in(-1.2,1/2)$. In this case, following \cite{Lobato99, Shimotsu07}, we suppose that the cross-spectral density satisfies: for all $\lambda\in[-\pi,\pi]$,
$$\text{for all } (a,b)~,~f_{a,b}(\lambda)=\frac{1}{2\pi}\Omega_{a,b}(1-e^{-i\lambda})^{-d_a}(1-e^{i\lambda})^{-d_b}f_{a,b}^S(\lambda).$$
The functions $f_{a,b}^S(\cdot)$ correspond to the short-range dependence behavior of the process. This modelling is semiparametric since, if it imposes the LRD behavior, short-range dependence is left nonparametric through functions $f^S(\cdot)$. Some assumptions on $f^S(\cdot)$ are needed, which will be detailed below.

The LRD parameters, $\bd$, model the long-run dynamics of
the process. This model is a multivariate extension of a scalar
fractionally integrated process (the so-called I(d) process), and for any $a\in\{1,\dots,p\}$, the time series $X_a$ exhibits long-range dependence whenever $0<d_a<1/2$. The case $-1/2<d_a<0$ corresponds to antipersistence, where the spectral density $f_a(\cdot)$ tends toward 0 at the origin. The case $d_a=0$ is the weak-dependence case, where the spectral density $f_a(\cdot)$ tends toward a positive constant at the origin. See \cite{Lobato99,Shimotsu07}. For simplicity, the term LRD is used throughout the paper, regardless of the values of $\bd$.  

Wavelet analysis performs an implicit differentiation, which offers the possibility to consider non stationary processes, that is, LRD parameters $d_i$ possibly higher than $1/2$.
Let $\mathbb{L}$ denote the difference operator, $\mathbb{L} \bX(t)=\bX(t+1)-\bX(t)$. The $k$th difference operator, $\mathbb{L}^k$, $k\in\N$, is defined by $k$ recursive applications of $\mathbb{L}$. Introduce $\bD\in\N^p$. We suppose that the multivariate process $\bZ=\left\{\mathbb{L}^{D_{a}}X_a(k),\; k\in\Z,\; a=1,\dots,p\right\}$ is covariance stationary with a spectral density matrix given by, for all $\lambda\in[-\pi,\pi]$:
\begin{equation}
\text{for all } (a,b)\in\{1,\dots,p\}^2~,~f_{a,b}^{(D_a,D_b)}(\lambda)=\frac{1}{2\pi}\Omega_{a,b}(1-e^{-i\lambda})^{-d_a^*}(1-e^{i\lambda})^{-d_b^*}f_{a,b}^S(\lambda),
\end{equation}
where the long-range dependence parameters of $\bZ$ are given by $d_a^*\in(-1/2,1/2)$ for all $a=1,\dots, p$. 

Let the overline be the conjugate operator and $\circ$ be the Hadamard product. For any vector $\bv\in\R^p$, $\diag(\bv)$ stands for the $p\times p$ matrix with entries $\bv$ in the diagonal and 0 elsewhere. 

The LRD assumption can be expressed as follows:

\begin{enumerate}[label={(M1)}]
\item \label{ass:LRD}
The generalized spectral density of the multivariate process $\bX$ is, for all $\lambda\in[-\pi,\pi]$, 
\begin{equation}\label{eqn:density}
\bff(\lambda)=\bOmega\circ( \bLambda^0(\bd) \bff^S(\lambda) \overline{\bLambda^0(\bd)} ) ,\;  \text{~~with~~} \bLambda^0(\bd)=\diag((1-e^{-i\lambda})^{-\bdexp}),
\end{equation} 
where $\bd=\bD+\bd^*$, $\bD\in\N^p$, $\bd^*\in(-1/2, 1/2)^p$. 
\end{enumerate}

The matrix $\bOmega$ is called \textit{fractal connectivity} by~\cite{Achard08} or \textit{long-run covariance} matrix by \cite{RobinsonDiscussion}. Similar to \cite{Moulines07SpectralDensity, AchardGannaz} we introduce some regularity assumptions on the short-range dependence, modeled by function $\bff^S(\cdot)$.

The space $\mathcal H_p(\beta,L)$ is defined as the class of non-negative symmetric functions $\bg(\cdot): [\pi,\pi]\to \C^{p\times p}$ such that $\bg(0)=\mathbf{1}_{p\times p}$ and such that $$\sup_{\lambda\in(-\pi,\pi)}|\bg(\lambda)-\mathbf{1}_{p\times p}|\leq L |\lambda|^\beta,$$
with $\mathbf{1}_{p\times p}$ the $p\times p$ matrix with all entries equal to 1. We suppose that the following assumption is fulfilled:
\begin{enumerate}[label={(M2)}]
\item \label{ass:beta}
$\bff^S(\cdot)\in\mathcal H_p(\beta,L)$ with $0<\beta\leq 2$ and $0<L$. 
\end{enumerate}
Assumption \ref{ass:beta} imposes that $\bff^S(0)$ has constant entries equal to 1. This assumption is necessary to make $\bOmega$  identifiable in \ref{ass:LRD}.

When $\lambda$ tends toward 0, the spectral density matrix can be approximated at the first order by
\begin{equation}\label{eqn:approx}\bff(\lambda)\sim  \tilde\bLambda(\bd) \bOmega  \overline{\tilde\bLambda(\bd)}, \; \text{~~with~~} \tilde \bLambda(\bd)=\diag(|\lambda|^{-\bdexp}e^{-i\pi \bdexp/2}),
\end{equation} 
where $\sim$ means that the ratio of the left- and right-hand sides converges to one.

Lobato \cite{Lobato99} uses $\tilde \bLambda(\bd)=\diag(\lambda^{-\bdexp})$ as an approximation of $\bff(\cdot)$ whereas Shimotsu \cite{Shimotsu07} chooses to approximate $\bff(\cdot)$ using $\tilde \bLambda(\bd)=\diag(\lambda^{-\bdexp}e^{-i(\pi-\lambda) \bdexp/2})$, which corresponds to a second-order approximation due to the remaining term $\lambda$ in the exponential. We refer to \cite[Section 2.1]{AchardGannaz} and references therein for examples of processes satisfying approximation \eqref{eqn:approx}.

\subsection{Linear decomposition}

We suppose hereafter that the multivariate process admits a linear representation.

\begin{enumerate}[label=(M3)]
\item \label{ass:linear}
There exists a sequence $\{\bA^{(\bDexp)}(u)\}_{u\in\Z}$ in $\R^{p\times p}$ such that $\sum_{u\in\Z} \max_{a,b=1,\dots,p}|A_{a,b}^{(\bDexp)}(u)|^2<\infty$ and 
\[
\forall t\in\Z,\,~\bigl(\L^{D_{a}}X_a(t)\bigr)_{a=1,\dots,p}=\sum_{u\in\Z} \bA^{(\bDexp)}({t+u}) \bepsilon(u)
\]
with $\bepsilon(t)$ weak white noise process, in $\R^p$. Let $\mathcal F_{t-1}$ denote the $\sigma$-field of events generated by $\{\bepsilon(s), \,s\leq t-1\}$. Assume that $\bepsilon$ satisfies $\E[\bepsilon(t)|\mathcal F_{t-1}]=0$, $\E[\epsilon_a(t)\epsilon_b(t)|\mathcal F_{t-1}]=\1_{a=b}$ and $\E[\epsilon_a(t)\epsilon_b(t)\epsilon_c(t)\epsilon_d(t)|\mathcal F_{t-1}]=\mu_{a,b,c,d}$ with $|\mu_{a,b,c,d}|\leq \mu_\infty<\infty$, for all $a,b,c,d=1,\ldots,p$.
\end{enumerate}

Define for all $\lambda\in\R$, $\bA^{(\bDexp)\ast}(\lambda)=\sum_{t\in\Z} \bA^{(\bDexp)}(t)\rme^{\rmi\lambda\,t}$ the Fourier series associated to $\{\bA^{(\bDexp)}(u)\}_{u\in\Z}$. That is, $\bA^{(\bD)\ast}(\lambda)=\bigl(A^{(\bDexp)\ast}_{a,b}(\lambda)\bigr)_{a,b=1,\dots,p}$ with
\begin{equation}\label{eqn:Aast}
A^{(\bDexp)\ast}_{a,b}(\lambda)= (2\pi)^{-1/2}\sum_{t\in\Z}A^{(\bDexp)}_{a,b}(t)e^{-i\lambda t}\,,\quad \lambda\in\R\,.
\end{equation}
We add the following assumption:
\begin{enumerate}[label=(M4),topsep=-10pt]
\item \label{ass:Aast} For all $(a,b)\in\{1,\dots,p\}^2$, for all $\lambda\in\R$, the sequence $(2^{-j\,d_a}\lvert A^{(\bDexp)\ast}_{a,b}(2^{-j}\lambda)\rvert)_{j\geq 0}$ is convergent as $j$ goes to infinity.
\end{enumerate}
This assumption is necessary for technical reasons. It does not seem restrictive.

An example of a process that satisfies these assumptions is the causal multivariate linear representations with trigonometric power law coefficients proposed in \cite{KechagiasPipiras}.

\subsection{Wavelet representation}

\label{sec:W}

We introduce a discrete wavelet transform. Write $L^2(\R)$ the set of square-integrable functions with respect to the Lebesgue measure. Let $\phi(\cdot)$ and $\psi(\cdot)$ be two functions of $L^2(\R)$. Their Fourier transforms are given by $\hat \phi(\lambda)=\int_{-\infty}^\infty \phi(t)e^{-i\lambda t}dt$ and $\hat \psi(\lambda)=\int_{-\infty}^\infty \psi(t)e^{-i\lambda t}dt$, for all $\lambda\in\R$. We suppose that  $\phi(\cdot)$ and $\psi(\cdot)$ satisfy the following assumptions:
\begin{enumerate}[label=(W\arabic*),topsep=-10pt]
\item \label{ass:Wcompact}The functions $\phi(\cdot)$ and $\psi(\cdot)$ are integrable,  have compact supports, $\int_\R \phi(t)dt=1$ and $\int_\R \psi^2(t)dt=1$.
\item \label{ass:Wregularity}There exists $\alpha>1$ such that $\sup_{\lambda\in\R} |\hat \psi(\lambda)|(1+|\lambda|)^\alpha\,<\,\infty$.
\item \label{ass:Wmoments} The mother wavelet $\psi(\cdot)$ has $M>1$ vanishing moments.
\item \label{ass:Wpoly} The function $\sum_{k\in\Z}k^\ell\phi(\cdot -k)$ is polynomial with degree $\ell$ for all $\ell=1,\ldots,M-1$.
\item \label{ass:Wparameters}For all $i=1,\dots,p$, $(1+\beta)/2-\alpha\,<\,d_i\,\leq\, M$.
\end{enumerate}
Recall that $\beta$ in \ref{ass:Wparameters} is the regularity of the short-range dependence behavior introduced in \ref{ass:beta}.

These assumptions are the same as the ones considered in \cite{Moulines07SpectralDensity, Moulines08Whittle, AchardGannaz}. Assumptions \ref{ass:Wcompact}--\ref{ass:Wpoly} are usual when considering that $\phi(\cdot)$ and $\psi(\cdot)$ are respectively the scaling-function and the wavelet-function associated with a multiresolution analysis~\citep{Cohen}. They are satisfied, for example, by Daubechies wavelets. These wavelets are parametrized by the number of vanishing moments $M$. Assumption \ref{ass:Wregularity} holds with $\alpha$ an increasing function of $M$ going to infinity (see \cite{Daubechies}). Assumptions \ref{ass:Wcompact}--\ref{ass:Wparameters} are fulfilled by Daubechies wavelet basis with sufficiently large $M$. 

Assumption \ref{ass:Wmoments} implies that the wavelet transform performs an implicit differentiation of order $M$ and makes it possible to consider nonstationary processes. In Fourier analysis, tapering procedures are necessary to consider nonstationary frameworks, see {\it e.g.}~\cite{VelascoRobinson00, HurvichChen00}, and references therein.

At a given resolution $j\geq 0$, for $k\in\Z$, we define the dilated and translated functions $\phi_{j,k}(\cdot)=2^{-j/2}\phi(2^{-j}\cdot -k)$ and $\psi_{j,k}(\cdot)=2^{-j/2}\psi(2^{-j}\cdot -k)$. The wavelet coefficients of the process $\bX$ are defined by \[
\bW({j,k})=\int_\R \tilde{\bX}(t)\psi_{j,k}(t)dt\quad j\geq 0, k\in\Z,
\]
where $\tilde{\bX}(t)=\sum_{k\in\Z}\bX(k)\phi(t-k).$
For given $j\geq 0$ and $k\in\Z$, $\bW({j,k})$ is a $p$-dimensional vector $\bW({j,k})=\begin{pmatrix}
W_1({j,k}) & W_2({j,k}) & \dots & W_p({j,k})^T \end{pmatrix}$ where $W_a({j,k})= \int_\R \tilde{X_a}(t)\psi_{j,k}(t)dt$, $a=1,\dots,p$.
Throughout the paper, we adopt the same convention as in \cite{Moulines07SpectralDensity} and \cite{Moulines08Whittle}; that is, large values of the scale index $j$ correspond to coarse scales (low frequencies). The index $k$ is a location parameter, and $\bW(j,k)$ captures information at scale $j$ and location $k$ on the behavior of the process $\bX$.

In practice, let $\bX(1),\ldots \bX(N_X)$ denote the observations of the process $\bX$.  
Since the wavelets have a compact support, only a finite number $n_j$ of coefficients are non-null at each scale $j$. Suppose without loss of generality that the support of $\psi(\cdot)$ is included in $[0,T_{\psi}]$ with $T_{\psi}\geq 1$. For every $j\geq 0$, define \begin{equation}\label{eqn:nj} 
n_j:=  \max{(0,2^{-j}(N_X-T_{\psi}+1)-T_\psi+1)}.
\end{equation}
At each scale $j$, the non-zero coefficients belong to $\{\bW({j,k}), \, k=0,\dots,n_j\}$. 

Let $j_0$ be the minimal scale and $j_1 = j_0+\Delta $ the maximal scale which are considered in the estimation procedure. Following \cite{Moulines08Whittle, AchardGannaz}, the asymptotic behavior is given for $N_X$ and $j_0$ going to infinity. Results obtained in \cite{Moulines08Whittle, AchardGannaz,AchardGannaz_code} state that optimal rates in estimation are obtained when $j_0$ is high enough to remove the scales affected by low-range dependence. In practice, the number of scales $\Delta$ is finite. Yet, considering the asymptotic behavior, two cases may be distinguished: either the number of scales $\Delta$ is finite and fixed when $j_0$ goes to infinity, or $\Delta=j_1-j_0$ goes to infinity. The latter case seems natural, for example, when one takes all available scales above $j_0$ in estimation.

In the following, $n$ will denote the number of wavelet coefficients used for estimation and $<\mathcal J>$ the mean of scales, that is, \[
n=\sum_{j=j_0}^{j_1} n_j \text{ ~~and~~ } <\mathcal J>=\frac{1}{n}\sum_{j=j_0}^{j_1}n_j\,j\,.
\]
Define also
\begin{equation}
\label{eqn:eta}
\eta_\Delta := \sum_{u=0}^\Delta u \frac{2^{-u}}{2-2^{-\Delta}} \text{  ~~and ~~ }
\kappa_\Delta := \sum_{u=0}^\Delta (u-\eta_\Delta)^2 \frac{2^{-u}}{2-2^{-\Delta}}\,.
\end{equation}
These sequences converge respectively to 1 and to 2 when $\Delta$ goes to infinity \citep[Lemma 13]{Moulines08Whittle}.

Moulines et al. state that under
assumptions \ref{ass:Wcompact}--\ref{ass:Wparameters}, the wavelet coefficient process $\{\bW(j,k),~ k\in\Z\}$ is covariance stationary for any given $j\geq 0$ \citep{Moulines07SpectralDensity}. Let 
\begin{align}
 D_{u,\tau}(\lambda;\delta)&= \sum_{t\in\Z}\lvert\lambda+2t\pi\rvert^{-\delta} \overline{\hat\psi(\lambda+2t\pi)}\,2^{u/2}\hat\psi(2^u(\lambda+2t\pi))\,\rme^{-\rmi 2^{u}\tau(\lambda+2t\pi)}\,,\\
\label{eqn:Dinf}\tilde D_{u,\infty}(\lambda;\delta) &= \sum_{\tau=0}^{2^{-u}-1} D_{u,\tau}(\lambda;\delta)\,.
\end{align}
Moulines, Roueff and Taqqu \cite{Moulines07SpectralDensity} establish that  $D_{u,\tau}(\lambda;\delta)$ is an approximation of the cross-spectral density between wavelet coefficients $\{\bW({j,k}), \,k\in\Z\}$ and  $\{\bW({j+u,2^uk+\tau}),\; \tau=0,\dots, 2^u-1,\, k\in\Z\}.$ The parameter $\delta$ captures the long-range dependence of the processes. Indeed, the cross-spectral density  of $(W_a({j,k}),W_b({j+u,2^jk'+\tau}))$ is approximated by $D_{u,\tau}(\lambda;d_a+d_b)$. Function $\tilde D_{u,\infty}(\lambda;\delta)$ allows us to consider between-scales dependence.

For $u\geq 0$, $(\delta_1,\delta_2)\in(-\alpha,M)^2$, define 
\begin{equation}
\label{eqn:Iu}
I_u(\delta_1,\delta_2)=   \int_{-\pi}^\pi\overline{\tilde D_{u,\infty}(\lambda;\delta_1)}\tilde D_{u,\infty}(\lambda;\delta_2)\, \rmd\lambda\,,
\end{equation} where $\tilde D_{u,\infty}(\lambda;\delta_2)$ is defined in \eqref{eqn:Dinf}. $I_u(\delta_1,\delta_2)$ will naturally appear when studying the covariance between sample wavelet covariances.

\section{Asymptotic normality of sample wavelet covariances and correlations}
\label{sec:cov}

Define $\hat \sigma_{a,b}(j)$ as the empirical covariance of the wavelet coefficients at a given scale $j\geq 0$, between components $a$ and $b$, and let  $\sigma_{a,b}(j)$ denote the theoretical covariance,
\begin{align*}
\hat \sigma_{a,b}(j)&=\frac{1}{n_j} \sum_{k=0}^{n_j-1}W_a({j,k})W_b({j,k}),\\
\sigma_{a,b}(j)&=\E[W_a({j,k})W_b({j,k})].
\end{align*}
Let $\hat \bSigma(j)=(\hat \sigma_{a,b}(j))_{a,b=1,\dots,p}$ and $\bSigma(j)=(\sigma_{a,b}(j))_{a,b=1,\dots,p}$ be the two associated matrices in $\R^{p\times p}$. In the following, for any matrix $\bM\in\C^{p\times p}$, the maximal entry will be denoted by $\norm{\bM}_\infty=\max_{a,b=1,\dots,p} \lvert M_{a,b}\rvert$.

Proposition 2 in \cite{AchardGannaz} proposes an approximation of the wavelet covariance at a given scale. It is recalled below.

\begin{prop}[\cite{AchardGannaz}]
\label{prop:approx}
Suppose assumptions \ref{ass:LRD}--\ref{ass:beta} and \ref{ass:Wcompact}--\ref{ass:Wparameters} hold.
For all $j\geq 0$, for all $\lambda\in(-\pi,\pi)$,
 \begin{equation}
 \label{eqn:theta}
 \norm{\bLambda(j)(\bd)^{-1}\bSigma(j)\bLambda(j)(\bd)^{-1} - \bG}_\infty\leq C L 2^{-\beta\,j}.
 \end{equation}
with constant $C$ depending on $\beta$, $\min_\ell d_\ell$, $\max_\ell d_\ell$, $\max_{\ell,m}\abs{\Omega_{\ell,m}}$, $\phi$ and $\psi$ and\begin{align}
\label{eqn:lambda}
\bLambda_j(\bd)&=\diag\bigl(2^{j\bdexp}\bigr),\\
\label{eqn:G0}
G_{a,b}&=\Omega_{a,b} \cos(\pi(d_a-d_b)/2)K(d_a+d_b),\quad a,b=1,\dots,p,\\
K(\delta)&=\int_{-\infty}^\infty \lvert\lambda\rvert^{-\delta}\lvert\hat\psi(\lambda)\rvert^2\rmd\lambda, \quad\delta\in(-\alpha,M).
\end{align}
\end{prop}

For $u\geq 0$, $(\delta_1,\delta_2)\in(-\alpha,M)^2$, let us introduce $\tilde I_u(\delta_1,\delta_2)$ as \begin{equation}
\label{eqn:Itilde}
\tilde I_u(\delta_1,\delta_2)=2\pi\,\frac{I_{u}(\delta_1,\delta_2)}{K(\delta_1)K(\delta_2)}\,,
\end{equation}
with $I_u(\delta_1,\delta_2)$ defined in \eqref{eqn:Iu}. We also define ${\GIG(u)}\in\R^{p^2\times p^2}$ as:
\begin{multline}
\label{eqn:Itilde_mat}
\GIG(u,u')=\diag\Bigl(\vect{\bLambda_{u\wedge u'}(\bd)^{-1}\bG\bLambda_{u\wedge u'}(\bd)^{-1}}\Bigr)
\\
(\tilde \bI_{\lvert u-u'\rvert} (d_a+d_b, d_{a'}+d_{b'})_{(a,b),(a',b'))\in\{1,\dots,p^2\}}\diag\Bigl(\vect{\bLambda_{u\wedge u'}(\bd)^{-1}\bG\bLambda_{u\wedge u'}(\bd)^{-1}}\Bigr)\,.
\end{multline}

\begin{rem} Observe that 
\[\tilde I_0(\delta_1,\delta_2)=\frac{2\pi\,\int_{-\pi}^\pi g_\psi(\lambda;\delta_1)g_\psi(\lambda;\delta_2)\rmd\lambda}{\left(\int_{-\pi}^\pi g_\psi(\lambda;\delta_1)\rmd\lambda\right)\left(\int_{-\pi}^\pi g_\psi(\lambda;\delta_2)\rmd\lambda\right)}
\]
where $g_\psi(\lambda;\delta)=\sum_{t\in\Z}\lvert \lambda+2t\pi\rvert^{-\delta} \lvert\hat\psi(\lambda+2t\pi)\rvert^2$. It is straightforward that  $\tilde I_0(\delta_1,\delta_2)\leq 2\pi$. Cauchy-Schwarz's inequality on the denominator also provides $\tilde I_0(\delta_1,\delta_1)\geq 1$. 
\end{rem}

 Here and subsequently, $\tend^{\mathcal L}$ denotes a convergence in distribution. The asymptotic distribution of the sample wavelet covariance process is given in the following theorem.

\begin{thm}
\label{thm:gauss}
For all $j_0\geq 0$, $u\geq 0$, define \begin{align*}
{\hat{\bT}(j_0+u)}&=\vect{2^{-(j_0+u)(d_a+d_b)} \hat \sigma_{a,b}(j_0+u),\; a,b=1,\dots,p}\\
\vec{\bG}&=\vect{G_{a,b},\; a,b=1,\dots,p}
\end{align*}
where $\vect{\bM}$ denotes the operation which transforms a matrix $\bM\in\R^{p_1\times p_2}$ in a vector of $\R^{p_1 p_2}$.
Suppose assumptions {\ref{ass:LRD}--\ref{ass:Aast}} and { \ref{ass:Wcompact}--\ref{ass:Wparameters} } hold. Let $2^{-j_0\beta}\to 0$ and $N_X^{-1}2^{j_0}\to 0$.
Then for all $\Delta\in\N$,
\begin{equation}
\left\{\sqrt{n_{j_0+u}}\left(\hat \bT(j_0+u)- \vec{\bG}\right), ~ u=0,\dots,\Delta\right\}\tend^{\mathcal L}_{j_0\to\infty} \{\bQ(u),\,u=0,\dots,\Delta\},
\end{equation}
where $\bQ(\cdot)$ is a centered Gaussian process with covariance function $\cov{(Q_{a,b}(u),\,Q_{a',b'}(u'))}=
V_{(a,b),(a',b')}(u,u')$ where
\begin{multline}
\label{eqn:variance}
V_{(a,b),(a',b')}(u,u')= 2^{-|u-u'|/2}\,\Bigl(\GIG_{(a,a'),(b,b')}(u,u')+\GIG_{(a,b'),(a',b)}(u,u')\Bigr), 
\end{multline}
and $\GIG(u,u')$ is defined in \eqref{eqn:Itilde_mat}.
\end{thm}
The proof is given in \ref{proof:thm:gauss}. It is similar to the one of the univariate setting given in \cite[Theorem 2]{Roueff09asymptotic}. It relies on decimated processes and limit theorems developed in \cite{Roueff09central}.

\begin{rem}
In the univariate setting, we obtain the same result as \cite[Theorem 2]{Roueff09asymptotic}. The authors use a different normalization, by $\sqrt{N_X2^{-j_0}}$ rather than by $\sqrt{n_{j_0+u}}$. The correspondence between the results follows first from the equivalence $\sqrt{n_{j_0+u}}\sim \sqrt{N_X2^{-j_0-u}}$ and second from approximation \eqref{eqn:theta} for the expectancy term. 

The main difference in the multivariate case is that the LRD properties in two processes can be different. It introduces a bias term through the presence of the cosinus term in $\bG$ \eqref{eqn:G0} and slightly modifies the variance through terms $\tilde \bI_u(\cdot,\cdot)$ \eqref{eqn:Itilde_mat}.
\end{rem}

\begin{rem}
In \cite{WhitcherGuttorpPercival}, Whitcher et al. establish the asymptotic normality for wavelet correlations of bivariate multivariate time series with long-range dependence. The advantage of Theorem~\ref{thm:gauss} is to provide an explicit form of the asymptotic variance.
\end{rem}

\begin{rem}
As already pointed out by Roueff and Taqqu \cite{Roueff09asymptotic}, the covariance of the wavelet coefficients involves between-scales correlations which do not vanish when the sample size goes to infinity. This fact contrasts with the behavior of Fourier periodogram or Fourier-based Whittle estimation. In the variance formulation \eqref{eqn:variance}, these correlations appear through quantities $\{\tilde I_u(\delta_1,\delta_2),\; u\geq 0,\; (a,b)\in\{1,\dots,p\}^2\}$.
\end{rem}

We can deduce the asymptotic normality for sample wavelet correlations by means of delta method. We do not present here the multivariate result for the sake of brevity, except when the matrix $\bG$ is diagonal since formulas are more simple. We focus on the pointwise result to highlight the specificity of our setting. 

\begin{cor}
\label{cor:cor}
Let $(a,b)\in\{1,\dots,p\}^2$, $a\neq b$, and $j\geq j_0\geq 0$. Define \[
\hat \rho_{a,b}(j)=\frac{\hat \sigma_{a,b}(j)}{\sqrt{\hat \sigma_{a,a}(j)\hat \sigma_{b,b}(j)}}\text{  and  }r_{a,b}=\frac{G_{a,b}}{\sqrt{G_{a,a} G_{b,b}}}.
\]
Then, under conditions of Theorem~\ref{thm:gauss},
\[
\sqrt{n_j}\,\left(\hat \rho_{a,b}(j)-r_{a,b}\right)\tend^{\mathcal L}_{j\to\infty} \mathcal{N}\left(0,V^{(\mathbf{\rho})}_{a,b}\right)
\]
with
\begin{multline}
\label{eqn:var_rho}
V^{(\mathbf{\rho})}_{a,b}=\left(\tilde I_0(2d_a,2d_b)+\tilde I_0(d_a+d_b,d_a+d_b)(r_{a,b}^2+r_{a,b}^4)\right.\\\left.-( \tilde I_0(2d_a,2d_b) + \tilde I_0(2d_b,d_a+d_b))\,2\, r_{a,b}^2\right. \\\left.- (\tilde I_0(2d_a,2d_a)+ \tilde I_0(2d_b,2d_b))\,r_{a,b}^2/2\right).
\end{multline}
When all off-diagonal entries of $\bG$ are equal to 0, \begin{multline*}
\sqrt{n_j}\,\vect{\hat \rho_{a,b}(j), ~1\leq a<b\leq p}\\
\tend^{\mathcal L}_{j\to\infty} \mathcal{N}_{p(p-1)/2}\left(0,\diag\Bigl(\vect{2^{-j(2d_a+2d_b)}\tilde I_0(2d_a,2d_b), ~1\leq a<b\leq p}\Bigr)\right)\,.
\end{multline*}
\end{cor}

\begin{rem} Let $\hat \rho$ denote the sample correlation of a bivariate-Gaussian-distributed $n$-sample with correlation $\rho$. Then  $\sqrt{n}(\hat \rho-\rho)\tend^{\mathcal L}_{n\to\infty} \mathcal{N}\left(0,(1-\rho^2)^2\right)$; see \emph{e.g.} \cite[Theorem 4.2.4]{Anderson}. When parameters $d_a$ and $d_b$ are equal, Corollary \ref{cor:cor} entails that
\[
\sqrt{n_j}\,(\hat \rho_{a,b}(j)-r_{a,b})\tend^{\mathcal L}_{j\to\infty} \mathcal{N}\left(0,\tilde I_0(2d_a,2d_a)(1-r_{a,b}^2)^2\right)\,.
\] 
We recover a similar form of the asymptotic distribution, up to a normalization constant.
\end{rem}

\begin{rem}
In \cite[Section 4.2.]{WhitcherGuttorpPercival} Whitcher et al. use the convergence of the Fisher transform of $\hat\rho_{a,b}(j)$ to a standard Gaussian distribution at a rate $\sqrt{n_j}$, when the correlation $r_{a,b}(j)$ is equal to zero.
 The result is true if we suppose that between-scale wavelet coefficients are independent, which is asymptotically satisfied when the regularity of the wavelet goes to infinity \citep{decorrelate}. Corollary~\ref{cor:cor} illustrates that an additional normalization by $\tilde I_0(2d_a,2d_b)^{-1/2}$ of $\hat\rho_{a,b}(j)$ is necessary. 

Some computed values of $\tilde I_0(2d_a,2d_b)$ are displayed in Table~\ref{tab:I}. It shows that $\tilde I_0(2d_a,2d_b)$ indeed decreases when the regularity increases. But between-scale wavelet coefficients dependence may not be negligible if the regularity is not high enough.
For example, in the absence of long-range dependence, when $d_a=d_b=0$, $\tilde I_0(0,0)=1.62$ for Daubechies wavelets with $M=4$ vanishing moments.
 Hence, in real data application, the approximation in \cite{WhitcherGuttorpPercival} may lead to false positives.
\end{rem}

\begin{table}[!ht]
\centering
\begin{tabular}{llccccc}
&&\multicolumn{5}{c}{$\bd=(d_1,d_2)$} \\ 
& & (0, 0)&(0, 0.1)&(0, 0.2)&(0, 0.3)&(0, 0.4)\\
  \midrule
M= 1  &	 $\alpha$= 1.00 	 &  5.43 &  5.56 &  5.67 &  5.77 &  5.86\\ 
M= 2  &	 $\alpha$= 1.34 	 &  2.65 &  2.66 &  2.67 &  2.68 &  2.69\\ 
M= 3  &	 $\alpha$= 1.64 	 &  1.85 &  1.86 &  1.86 &  1.86 &  1.87\\ 
M= 4  &	 $\alpha$= 1.91 	 &  1.62 &  1.62 &  1.61 &  1.61 &  1.61\\ 
M= 5  &	 $\alpha$= 2.18 	 &  2.05 &  2.04 &  2.04 &  2.03 &  2.02\\ 
M= 6  &	 $\alpha$= 2.43 	 &  1.90 &  1.91 &  1.92 &  1.93 &  1.94\\ 
M= 7  &	 $\alpha$= 2.68 	 &  1.22 &  1.23 &  1.24 &  1.25 &  1.26\\ 
M= 8  &	 $\alpha$= 2.93 	 &  1.01 &  1.01 &  1.01 &  1.01 &  1.01\\ 
\bottomrule
\end{tabular}
\caption{Values of $\tilde I_0(2d_1,2d_2)$ with respect to $\bd=(d_1,d_2)$  for Daubechies's wavelets with different values of vanishing moments $M$ in \ref{ass:Wmoments}. Parameter $\alpha$ characterizes the regularity of the wavelets in \ref{ass:Wregularity}.}
\label{tab:I}
\end{table}

\begin{rem}
The wavelet correlation at a given scale is also known as \textit{wavelet coherence}. It is used in some applications, as in environmental studies by \cite{WhitcherGuttorpPercival}, or in neurosciences in \cite{spie_test}. In such real data applications, the crucial point is the use of test procedures. In particular, the test of the nullity of the correlations is essential. Corollary~\ref{cor:cor} shows that the asymptotic distribution depends on the long-range dependence parameters $\bd$. Plugging in a consistent estimator of parameter $\bd$ in \eqref{eqn:var_rho} allows for a test procedure to be built. For instance,   one can use the wavelet Whitlle estimator described in Section~\ref{sec:d} below. 
\end{rem}

\section{Asymptotic normality of the parameters estimates}
\label{sec:d}

For clarity, the true parameters are denoted with an exponent 0 in this part.

The wavelet-based local Whittle procedure proposes to estimate the parameters by maximizing a pseudo-likelihood given by a Gaussian approximation of the wavelet coefficients $\{\bW(j,k), j\geq 0, k=0,\dots,n_j\}$. Moulines et al. \citep{Moulines08Whittle} and Achard and Gannaz \citep{AchardGannaz} prove
that the wavelet-based Whittle approximation provides consistent estimators even for non-Gaussian processes. The Whittle procedure can also be applied in multivariate cases, which is not possible for example with the regression of the wavelet log-scalogram \cite{AbryVeitch98,Achard08}.

Let $\hat \bd$ and $\hat \bOmega$ be the wavelet Whittle estimators as defined in \cite[Section 3.3]{AchardGannaz}. They maximize the objective function
\begin{multline} \label{eqn:whittle}
\mathcal L(\bG(\bd),\bd) = \frac{1}{n}\sum_{j=j_0}^{j_1} \Bigl[ n_j \log\det\left(\bLambda_j(\bd)\bG(\bd)\bLambda_j(\bd)\right)\\+\sum_{k=0}^{n_j-1} \bW_{j,k}^T\left(\bLambda_j(\bd)\bG(\bd)\bLambda_j(\bd)\right)^{-1}\bW_{j,k}\Bigr],
\end{multline}
where the superscript $T$ denotes the transpose operator and $\bLambda_j(\bd)$ and the matrix $\bG(\bd)$ are defined respectively in \eqref{eqn:lambda} and \eqref{eqn:G0}.

The function $\mathcal L(\cdot,\cdot)$ corresponds to the negative log-likelihood of $\{\bW(j,k), j\geq 0, k=0,\dots,n_j\}$ under a Gaussian assumption, where Proposition~\ref{prop:approx} is used for a parametrization of the variance at each scale.
The estimation of the vector of long-range dependence parameters $\bd$ satisfies $\hat \bd=\argmin_{\bdexp\in\R^p} R(\bd)$, with
\begin{equation}
\label{eqn:R}
R(\bd)= \log\det(\hat \bG(\bd)) + 2\log(2)\left(\frac{1}{n}\sum_{j=j_0}^{j_1} j\, n_j\right)\left(\sum_{\ell=1}^p d_\ell\right).
\end{equation}
The covariance matrix $\bOmega$ is estimated by \begin{gather}\label{eqn:Ohat}\hat\Omega_{a,b}=\hat G_{a,b}(\hat \bd)/(\cos(\pi(\hat d_a-\hat d_b)/2) K(\hat d_a+\hat d_b)),\; a,b=1,\dots,p,\\
\label{eqn:G}\text{where  }
\hat \bG(\bd) =\frac{1}{n} \sum_{j=j_0}^{j_1} n_j \bLambda_j(\bd)^{-1}
\hat \bSigma(j)\bLambda_j(\bd)^{-1}.
\end{gather}

We introduce 
\begin{align}
\label{eqn:Idelta}
&\mathcal I^d_\Delta(\delta_1,\delta_2)= \frac{2}{\kappa_\Delta}\tilde I_0(\delta_1,\delta_2)\\
\nonumber & \pushright{+\frac{2}{\kappa_\Delta^2}\sum_{u=1}^{\Delta}(2^{u\delta_1}+2^{u\delta_2})\,2^{-u} \frac{2-2^{-\Delta+u}}{2-2^{-\Delta}}((u+\eta_{\Delta-u}-\eta_\Delta)(\eta_{\Delta-u}-\eta_\Delta)+\kappa_{\Delta-u})\,\tilde I_u(\delta_1,\delta_2)}\\
 &\pushright{\text{ if } \Delta<\infty,} \end{align}
 \begin{equation}
\label{eqn:Iinf}
\mathcal I^d_\infty(\delta_1,\delta_2)= \tilde I_0(\delta_1,\delta_2)+ \sum_{u=1}^{\infty}(2^{u\delta_1}+2^{u\delta_2})\,2^{-u}\,\tilde I_u(\delta_1,\delta_2)\;,\quad {\text{if } \Delta=\infty.}
\end{equation}
 Define also
\begin{multline}
\label{eqn:Itilded_mat}
\GIGd(\Delta)=\diag\Bigl(\vect{\bG^0}\Bigr)
\bigl(\mathcal I^d_\Delta(d_a^0+d_b^0, d_{a'}^0+d_{b'}^0)_{(a,b),(a',b')\in\{1,\dots,p^2\}}\bigr)\diag\Bigl(\vect{\bG^0}\Bigr)\,.
\end{multline}

The asymptotic normality of the estimator of the long-range dependence parameters is established by our next theorem.

\begin{thm}
\label{thm:d}
Suppose assumptions {\ref{ass:LRD}--\ref{ass:Aast}} and { \ref{ass:Wcompact}--\ref{ass:Wparameters}} hold.
Let $j_0 < j_1 \leq j_N$ with $j_N=\max\{j,n_j\geq 1\}$ such that \[
j_1-j_0\to \Delta\in\{1,\dots,\infty\},\; \log(N_X)^2(N_X 2^{-j_0(1+2\beta)}+ N_X^{-1/2} 2^{j_0/2})\to 0.
\] 
Then $\sqrt{n}(\hat\bd -\bd^0)$ converges in distribution to a centered Gaussian distribution with a variance equal to \begin{equation}
\label{eqn:vard}
\bV^{(\bdexp)}(\Delta)=\frac{1}{2\,\log(2)^{2}}(\bG^{0-1}\circ \bG^0+\bI_p)^{-1}\, \bUpsilon(\Delta)\, (\bG^{0-1}\circ \bG^0+\bI_p)^{-1},
\end{equation}
where $\bI_p$ is the identity matrix in $\R^{p\times p}$ and with entry $(a,a')$ of $\bUpsilon^{(\Delta)}$, for $(a,a')\in\{1,\dots,p\}^2$, given by
\begin{equation}
\label{eqn:W}
\Upsilon_{a,a'}{(\Delta)}=\sum_{b,b'=1,\dots,p} (G^{0-1})_{a,b}(G^{0-1})_{a',b'} \bigl(\GIGd_{(a,a'),(b,b')}(\Delta)+\GIGd_{(a,b'),(a',b)}(\Delta)\bigr)
\end{equation}
where quantities $\GIGd(\Delta)$ are defined by \eqref{eqn:Itilded_mat}.
\end{thm}
The proof is given in \ref{proof:thmd}.

\begin{rem}
In the univariate setting, we recover \cite[Theorem 5]{Roueff09asymptotic}, using the equality \[
\sum_{v=0}^{\Delta-u} \frac{2^{-v}}{2-2^{-\Delta}}(v-\eta_\Delta)(u+v-\eta_\Delta)= \frac{2-2^{-\Delta+u}}{2-2^{-\Delta}}((u+\eta_{\Delta-u}-\eta_\Delta)(\eta_{\Delta-u}-\eta_\Delta)+\kappa_{\Delta-u}),
\]
in \eqref{eqn:Idelta}.
Observe that the result is also normalized by $\sqrt{n}$ rather than $\sqrt{N_X2^{-j_0}}$.
\end{rem}

\begin{rem}
The condition on $j_0$ and $j_1$, that is, $\log(N_X)^2(N_X2^{-j_0(1+2\beta)}+ N_X^{-1/2} 2^{j_0/2})\to 0$ is more restrictive than the condition required for the consistency of the estimators given in \cite[Theorem 6]{AchardGannaz}. Roueff and Taqqu \cite[Theorem 5]{Roueff09asymptotic} obtain a similar result in the univariate setting.  As illustrated in \cite{AchardGannaz_code}, the condition $\log(N_X)^2\,N_X^{-1/2} 2^{j_0/2}\to 0$ means that the highest frequencies, which are affected by the short-range dependence, should be removed from the estimation. The additional condition $\log(N_X)^2\,N_X2^{-j_0(1+2\beta)}\to 0$ prevents us from choosing the scale $j_0=N_X^{1/(1+2\beta)}$ giving the minimax rate \citep[Corollary 7]{AchardGannaz}. Yet a near minimax rate is possible, with only a logarithmic lost, choosing, for example, $j_0=\log(N_X)^3\,N_X^{1/(1+2\beta)}$.
\end{rem}

\begin{rem}
If the vector $\bd^0$ has all entries equal to $d^0$, the resulting covariance is \[
\frac{1}{4\log(2)^{2}}\mathcal I_\Delta{(2d^0,2d^0)}(\bG^{0-1}\circ \bG^0+\bI_p)^{-1}\,.
\]
We recognize a form of asymptotic variance similar to the ones given by \cite{Lobato99}, \cite{Shimotsu07} and \cite{DukerPipiras}  with Fourier-based Whittle estimators. Note that they use a different approximation of spectral density at zero frequency. Lobato \cite{Lobato99} and Shimotsu \cite{Shimotsu07} consider respectively $G_{a,b}^0=\Omega_{a,b}\rme^{\rmi\pi(d_a-d_b)/2}$ and $\bG^0=\bOmega$.
D\"uker and Pipiras's modelling in \cite{DukerPipiras} is more general and does not suppose a linear representation of the time series. Their result is valid for a general form of matrix $\bOmega$. Additionally, Baek et al. \cite{Baek2020} establish asymptotic normality of estimators in a bivariate model with possible co-integration.
\end{rem}

\begin{rem}Consider the bivariate setting with $\bOmega=\begin{pmatrix} 1 & \rho \\ \rho & 1 \end{pmatrix}$ and $d_1=d_2=d$. Let $\hat d_1^U$ and $\hat d_2^U$ be the wavelet Whittle estimators obtained by separately considering the components $\{X_1(k), k=1,\dots,N_X\}$ and $\{X_2(k), k=1,\dots,N_X\}$ in. That is, for $i=1,2$,
\[
\hat d_i^U=\displaystyle \argmin_{d_i\in\R} R_i(d_i) \text{  with  } R_i(d_i)=\log\Bigl(\frac{1}{n}\sum_{j=j_0}^{j_1} n_j2^{-2jd_i}\hat \sigma_{ii}(j)\Bigr) + 2\log(2)\Bigl(\frac{1}{n}\sum_{j=j_0}^{j_1} j\,n_j\Bigr)\,d_i.
\]
According to Theorem \ref{thm:d} $\hat d_i^U$, $i=1,2$, are asymptotically normal, with the same asymptotic variance $\sigma^2(d,j_1-j_0)=V^{(d)}(j_1-j_0)$, given by \eqref{eqn:vard}.\\
Let now $\hat \bd$ be the bivariate wavelet Whittle estimator defined in \eqref{eqn:R}. Theorem \ref{thm:d} provides the asymptotic normality of $\hat d$ with the asymptotic variance given by \eqref{eqn:vard}, which is equal to \[
\bV^{(\bdexp)}(j_1-j_0)=(\bG^{0-1}\circ \bG^0+\bI_p)^{-1} \,2\,\sigma^2(d,j_1-j_0)=\begin{pmatrix}
1-\rho^2/2 & \rho^2/2 \\ \rho^2/2 & 1-\rho^2/2
\end{pmatrix}\sigma^2(d,j_1-j_0).
\]
This result proves that we reduce the entrywise variance when we perform multivariate estimation instead of univariate estimation. A similar conclusion was obtained for Fourier-based estimation by \cite{Lobato99} and \cite{Nielsen11}. Achard and Gannaz \cite{AchardGannaz_code} support this assertion on simulated data. In real data application, \cite{avignon} also establishes that the multivariate approach performs better than the univariate one, comparing  their application on fMRI data where subjects were scanned twice.
\end{rem}

\begin{rem}
Quantities $\mathcal I_\Delta(\delta_1,\delta_2)$ are computable for given $\delta_1, \delta_2, \Delta$. Hence, plugging in \eqref{eqn:vard}--\eqref{eqn:W} consistent estimators of $\bd$ and $\bG$, for example $\hat\bd$ and $\hat\bG(\hat\bd)$ \citep[Theorem 6]{AchardGannaz}, a test procedure on parameters $\bd$ can be built. 
\end{rem}

We now study the asymptotic behavior of the estimation of long-run covariance. We show the asymptotic normality of $\hat\bG(\hat\bd)$, defined in \eqref{eqn:G}.

Write
\begin{align*}
\mathcal I_\Delta^G(\delta_1,\delta_2) & = \tilde I_{0}(\delta_1,\delta_2)+\sum_{u=1}^\Delta (2^{u\delta_1}+2^{u\delta_2})2^{-u}\,\frac{2-2^{-\Delta+u}}{2-2^{-\Delta}}\,\tilde I_{u}(\delta_1,\delta_2)
 &\text{if } \Delta<\infty, \\
\mathcal I_\infty^G(\delta_1,\delta_2) &=\tilde I_{0}(\delta_1,\delta_2)+\sum_{u=1}^\infty (2^{u\delta_1}+2^{u\delta_2})2^{-u}\,\tilde I_{u}(\delta_1,\delta_2) &\text{if } \Delta=\infty. 
\end{align*}
Let us also define
\begin{multline}
\label{eqn:ItildeG_mat}
\GIGG(\Delta)=\diag\Bigl(\vect{\bG^0}\Bigr)
\bigl(\mathcal I_\Delta^G(d_a^0+d_b^0, d_{a'}^0+d_{b'}^0)_{(a,b),(a',b')\in\{1,\dots,p^2\}}\bigr)\diag\Bigl(\vect{\bG^0}\Bigr).
\end{multline}

We are now in a position to formulate the asymptotic distribution of $\hat G(\hat d)$.

\begin{thm}
\label{thm:G}
Suppose Assumptions {\ref{ass:LRD}--\ref{ass:Aast}} and {\ref{ass:Wcompact}--\ref{ass:Wparameters}} hold.
Let \[
j_1-j_0\to \Delta\in\{1,\dots,\infty\},\; \log(N_X)^2(N_X 2^{-j_0(1+2\beta)}+ N_X^{-1/2} 2^{j_0/2})\to 0.
\] 
Then $\vect{\sqrt{n}\left(\hat \bG(\hat\bd)-\bG^0\right)}$ converges in distribution to a centered Gaussian distribution with a variance equal to $\bV^{\bGexp(\Delta)}$, with
\begin{equation}
\label{eqn:WG}
V_{(a,b),(a',b')}^{(\bGexp)}(\Delta)=\GIGG_{(a,a'),(b,b')}(\Delta)+\GIGG_{(a,b'),(a',b)}(\Delta)
\end{equation}
where quantities $\GIGG(\Delta)$ are defined by \eqref{eqn:ItildeG_mat}.
\end{thm}
The proof is given in \ref{proof:thmG}.

We can deduce a convergence result for correlations.

\begin{cor}
\label{cor:corG}
Let $(a,b)\in\{1,\dots,p\}^2$, $a\neq b$. Define \[
\hat r_{a,b}=\frac{\hat G_{a,b}(\hat\bd)}{\sqrt{\hat G_{a,a}(\hat\bd)\hat G_{b,b}(\hat\bd)}}\text{  and  }r_{a,b}=\frac{G_{a,b}^0}{\sqrt{G_{a,a}^0 G_{b,b}^0}}.
\]
Then, under conditions of Theorem~\ref{thm:G},
\[
\sqrt{n}\,\left(\hat r_{a,b}-r_{a,b}\right)\tend^{\mathcal L}_{j\to\infty} \mathcal{N}\left(0,V^{(\mathbf{r})}_{a,b}(\Delta)\right)
\]
with
\begin{multline}
\label{eqn:Vr}
V^{(\mathbf{r})}_{a,b}(\Delta)=\mathcal I_\Delta^{G}(2d_a,2d_b)+\mathcal I_\Delta^{G}(d_a+d_b,d_a+d_b)(r_{a,b}^2+r_{a,b}^4)\\-( \mathcal I_\Delta^{G}(2d_a,2d_b) + \mathcal I_\Delta^{G}(2d_b,d_a+d_b))\,2\, r_{a,b}^2 - ( I_\Delta^{G}(2d_a,2d_a)+ \mathcal I_\Delta^{G}(2d_b,2d_b))\,r_{a,b}^2/2.
\end{multline}
When all off-diagonal entries of $\bG$ are equal to 0, \begin{equation}
\label{eqn:r0}
\sqrt{n}\,\vect{\hat r_{a,b},\; 1\leq a<b\leq p}\tend^{\mathcal L}_{j\to\infty} \mathcal{N}_{p(p-1)/2}\Bigl(0,\diag\bigl(\mbox{vec}\bigl({\tilde I_\Delta^G(2d_a,2d_b), \; 1\leq a<b\leq p}\bigr)\bigr)\Bigr).
\end{equation}
\end{cor}
The proof is based on delta method, and it is similar to the proof of Corollary~\ref{cor:cor}. It is thus omitted. The covariance structure of $\vect{\hat r_{a,b},\; a,b=1,\dots,p}$ can also be deduced from Theorem~\ref{thm:G}, but it is not displayed here.  

\begin{rem}
The result is very similar to the one presented in Corollary~\ref{cor:cor}. For all $(a,b)\in\{1,\dots,p\}^2$, the sequence $(\sqrt{n_j}(\rho_{a,b}(j)-r_{a,b}))_{j\geq 0}$ converges in distribution as $j$ goes to infinity. The strength of Corollary~\ref{cor:corG} is that all the scales are used to estimate $r_{a,b}$, which reduces the variance. Indeed, $(\sqrt{n}(\hat r_{a,b}-r_{a,b}))_{j\geq 0}$ converges in distribution as $j$ goes to infinity, with $n=\sum_{j=j_0}^{j_1} n_j$.
\end{rem}

\begin{rem}
When the LRD parameters are equal, \emph{i.e.} $d_a=d_b$, Corollary \ref{cor:corG} provides a more simple form, which is
\[
\sqrt{n}(\hat r_{a,b}-r_{a,b})\tend^{\mathcal L}_{j\to\infty} \mathcal{N}\left(0,\mathcal I_\Delta^{G}(2d_a,2d_a)(1-r_{a,b}^2)^2\right)\,.
\]
\end{rem}

\begin{rem}
The asymptotic variances $\bV^{(\mathbf{G})}{(\Delta)}$ and $\bV^{(\mathbf{r})}{(\Delta)}$, given respectively in Theorem~\ref{thm:G} and Corollary~\ref{cor:corG}, depend on parameters $\bd$. Test procedures can be built by plugging in $\bV^{(\bGexp)}(\Delta)$ and $\bV^{(\mathbf{r})}(\Delta)$ the estimator $\hat\bd$, which is consistent \citep[Theorem 6]{AchardGannaz}. 
\end{rem}

\begin{rem} In \cite{DukerPipiras}, D\"uker and Pipiras propose a global test for non-connectivity. That is, a test of $(H_0)\;\forall a\neq b,\; r_{a,b}=0 $ against  $(H_1)\; \exists a\neq b,\;r_{a,b}\neq0$.  A similar test can be developed in our setting, based on \eqref{eqn:r0}. Another possibility is to perform the $p(p-1)/2$ tests of $(H_{0\,a,b})\ r_{a,b}=0 $ against  $(H_{1\,a,b})\; \;r_{a,b}\neq0$, for $1\leq a<b\leq p$ and to apply a multiple testing correction on the p-values, for instance, Bonferroni's or Sidak's \citep{multiple}. This approach may be less powerful than the previous one if we are interested in the global test, but it provides information on which correlations are significant.
\end{rem}

We can go further than Theorem~\ref{thm:d} and Theorem~\ref{thm:G} by giving the joint distribution of estimators $\hat\bd$ and $\hat \bG(\hat\bd)$. 

\begin{prop}
\label{prop:dG}
Suppose assumptions of Theorem \ref{thm:G} hold.

Let $\mathbf T = \begin{pmatrix} \hat\bd-\bd^0, & \vect{\hat \bG(\hat\bd)-G^0}\end{pmatrix}$.

Then $\sqrt{n}\,\mathbf T$ converges in distribution to a centered Gaussian distribution.
\end{prop}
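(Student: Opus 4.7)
The plan is to write both blocks of $\mathbf{T}$ as asymptotically linear functionals of the same underlying vector of normalized wavelet covariances
\[
\mathbf{U}_{j_0} := \Bigl(\sqrt{n_{j_0+u}}\bigl(\hat T(j_0+u) - T(j_0+u)\bigr)\Bigr)_{u=0,\dots,\Delta},
\]
whose joint asymptotic normality is already granted by Theorem~\ref{thm:gauss}. Once this is done, joint convergence of $\sqrt{n}\,\mathbf{T}$ is obtained by applying the continuous mapping theorem to the linear maps and Slutsky's lemma for the negligible remainders. A key observation is that $n = \sum_{u=0}^{\Delta} n_{j_0+u}$ and $\sqrt{n/n_{j_0+u}}$ is bounded and converges to $\sqrt{(2-2^{-\Delta})/2^{-u}}$, so the rescalings between $\sqrt{n}$ and the scale-by-scale rates $\sqrt{n_{j_0+u}}$ involve only deterministic constants.

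For the second block, the covariance estimator at the \emph{true} parameter is explicitly linear in the empirical wavelet covariances:
\[
\hat G_{a,b}(\bd^0) - G^0_{a,b} = \frac{1}{n}\sum_{u=0}^{\Delta} n_{j_0+u}\, 2^{-u(d_a^0+d_b^0)} \Bigl(2^{-j_0(d_a^0+d_b^0)}\hat\sigma_{a,b}(j_0+u) - 2^{-j_0(d_a^0+d_b^0)} G^0_{a,b}\Bigr),
\]
up to the deterministic bias $2^{-j_0(d_a^0+d_b^0)} G^0_{a,b} - T_{a,b}(j_0+u)$ coming from~\eqref{eqn:theta}. Under the bandwidth condition $N_X 2^{-j_0(1+2\beta)} \to 0$, this bias is $O(2^{-j_0\beta})$ and becomes $o(1/\sqrt{n}) = o(\sqrt{N_X 2^{-j_0}}^{-1})$, hence negligible after multiplication by $\sqrt{n}$. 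Thus $\sqrt{n}\,\vect{\hat G_{a,b}(\bd^0) - G^0_{a,b}}$ is, up to an $o_P(1)$ term, the image of $\mathbf{U}_{j_0}$ by an explicit linear map, and its marginal limit is consistent with the covariance structure $\bW^{G(\Delta)}$ of Theorem~\ref{thm:G}.

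For the first block, I would recycle the expansion that drives the proof of Theorem~\ref{thm:d}. The estimator $\hat\bd$ solves $\nabla R(\hat\bd) = 0$, and a standard Taylor expansion gives
\[
\sqrt{n}\,(\hat\bd - \bd^0) = -\bigl(\nabla^2 R(\bar\bd)\bigr)^{-1} \sqrt{n}\,\nabla R(\bd^0),
\]
with the Hessian converging in probability to the deterministic matrix $\frac{2\log(2)^2}{\pi}(\bG^{0-1}\circ\bG^0+\bI_p)$ used in Theorem~\ref{thm:d}. Writing $\nabla R(\bd^0)$ as a smooth function of $\hat\bG(\bd^0)$ and hence of $\hat T(j_0+u)$, and Taylor-expanding around $\bG^0$, I obtain that $\sqrt{n}\,\nabla R(\bd^0)$ is itself asymptotically a linear functional of $\mathbf{U}_{j_0}$, up to an $o_P(1)$ remainder controlled by the same bias arguments as above. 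Consequently $\sqrt{n}\,(\hat\bd - \bd^0)$ is asymptotically a linear image of $\mathbf{U}_{j_0}$.

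Stacking the two linear maps produces a single continuous linear map $L$ such that $\sqrt{n}\,\mathbf{T} = L(\mathbf{U}_{j_0}) + o_P(1)$, and Theorem~\ref{thm:gauss} with Slutsky's lemma yields the joint centered Gaussian limit. The main obstacle will be the uniform control of the two approximations—(i) the remainder from the Taylor expansion of $\nabla R$ and the Hessian inversion, and (ii) the bias $\sigma_{a,b}(j)-2^{-j(d_a+d_b)}G_{a,b}$—at the $\sqrt{n}$ scale. Both are already handled in the proofs of Theorems~\ref{thm:d} and~\ref{thm:G}, so the work here reduces to checking that exactly the same linear functionals of $\mathbf{U}_{j_0}$ drive both components simultaneously, so that no additional hypothesis beyond those of Theorem~\ref{thm:G} is required.
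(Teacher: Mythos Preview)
Your approach is essentially the same as the paper's: both represent $\sqrt{n}(\hat\bd-\bd^0)$ and $\sqrt{n}(\hat G_{a,b}(\bd^0)-G_{a,b}^0)$ as, up to $o_\P(1)$, linear functionals of the centered wavelet covariances $(\hat\sigma_{a,b}(j_0+u)-\sigma_{a,b}(j_0+u))$, and then appeal to their joint asymptotic normality. The paper simply extracts the explicit weights $w_{u,(a',b')}^{(d)}$ and $w_{u,(a,b)}^{(G)}$ from the proofs of Theorems~\ref{thm:d} and~\ref{thm:G} and applies Theorem~3 of \cite{Roueff09asymptotic} directly to the combined linear form, whereas you package the same step through Theorem~\ref{thm:gauss} and the continuous mapping theorem.

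One point to tighten: your vector $\mathbf{U}_{j_0}$ is indexed by $u=0,\dots,\Delta$, but the assumptions of Theorem~\ref{thm:G} allow $j_1-j_0\to\Delta=\infty$. Theorem~\ref{thm:gauss} only delivers finite-dimensional convergence, so for $\Delta=\infty$ your continuous-mapping argument does not apply as stated; you would need a truncation step showing that the tail $\sum_{u>M}$ of the linear functional is negligible uniformly in $j_0$. This is exactly what Theorem~3 of \cite{Roueff09asymptotic} provides (and what the proofs of Theorems~\ref{thm:d} and~\ref{thm:G} already invoke), so the fix is just to appeal to that result directly for the combined weights rather than to Theorem~\ref{thm:gauss}.
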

A proof is given in \ref{proof:thmdG}. An explicit form of the asymptotic covariance term is given in \eqref{eqn:covdG2}-\eqref{eqn:covdG3}. It is not displayed here to gain in clarity.

\begin{rem}
Baek et al. \citep{Baek2020} and D\"uker and Pipiras \citep{DukerPipiras} also find that the estimates of long-range dependence parameters and long-run covariance converge jointly to a Gaussian distribution in a Fourier-based Whittle estimation framework. As stated before, they consider a more general model, allowing for a complex-valued matrix $\bOmega$.
\end{rem}
  
\section{Illustration on real data}
\label{sec:real}

We illustrate here the asymptotically Gaussian behavior on real data rather than on simulations. We consider fMRI recordings on dead and live rats. The dataset is freely available at \url{https://zenodo.org/record/2452871} \citep{guillaume2020functional,becq_10.1088/1741-2552/ab9fec}. The duration of scanning is 30 minutes with a time repetition of 0.5 second so that $N_X=3,600$ time points are available at the end of experience. After preprocessing as described in~\cite{Pawela2008}, we extracted $p=51$ time series, each one being associated with a brain region of the rat. fMRI recordings of brain activity are based on the hemodynamic response to a magnetic field, which may create some temporal and spatial dependence. They suffer from different sources of noise, including system-related instabilities, subject motion, or physiological fluctuations \citep{fMRInoise}. Additionally, during the preprocessing step, we aggregate the time series of each voxel to obtain a unique time series for each brain region.  This aggregation step may create LRD properties \citep{aggregation}. Our claim is that long-range dependence and long-run covariance are closely related to brain activity and not to recording artifacts or preprocessing. We would like to check this assertion on the dataset. This means that we expect $\bd^0=0$ and a diagonal matrix $\bG^0$ for a dead rat but not for a live one.


We estimate $\bd$ and $\bG$ by wavelet-based Whittle estimation, using \emph{multiwave} package~\citep{AchardGannaz_code}. We follow the procedure described in \cite[Section 5.2]{AchardGannaz_code} to choose the scales. Estimation is performed taking $j_0=4$ and $j_1=9$, which is the maximal scale; that is, we remove the frequencies above 0.12 Hz.

Based on Theorem~\ref{thm:d}, for each rat, we can test if the LRD parameters are significant for each brain region. That is, for all $a=1,\dots,p$, we test \[
(H_{0\,a}^{(\bdexp)})~d_a=0 \text{~~against~~}(H_{1\,a}^{(\bdexp)})~d_a\neq 0\,,
\]
replacing $\bd$ and $\bG$ respectively by $\hat d$ and $\hat \bG(\hat\bd)$ in $\bV^{(\bdexp)}$. We consider a level $\alpha'=5\%$ and apply Bonferroni's multiple testing correction, \emph{i.e.} each test is applied with a level $\alpha'/p$ to ensure that the probability to have a false positive on the $p$ tests is equal to $\alpha'$. 

Next Corollary~\ref{cor:corG} allows to test the significance of the long-run correlation between each pair of brain regions. For all $1\leq a <b\leq p$, we test \[
(H_{0\,a,b}^{(\mathbf{r})})~r_{a,b}=0 \text{~~against~~}(H_{1\,a,b}^{(\mathbf{r})})~r_{a,b}\neq 0\,.
\]
Similarly, we apply Bonferroni's multiple testing correction and we consider a level $\alpha'/(p(p-1)/2)$ for each test.

The tests have been applied on one dead rat and one live rat. The results   are displayed in Figure~\ref{fig:graphs} as graphs. 
 Figure~\ref{fig:graphs} shows that, indeed, we can conclude that $\bd^0=0$ and that off-diagonal entries of $\bG^0$ are equal to zero for the dead rat. For the live rat, six brain regions (over 51) have a significant LRD parameter, and 483 correlations (over 1275 of $\{r_{a,b},~1\leq a <b\leq p\}$) are significant. These observations tend to confirm that long-range dependence and long-run covariance result from brain activity.

\begin{figure}[!ht]
\centering
\includegraphics[height=8cm]{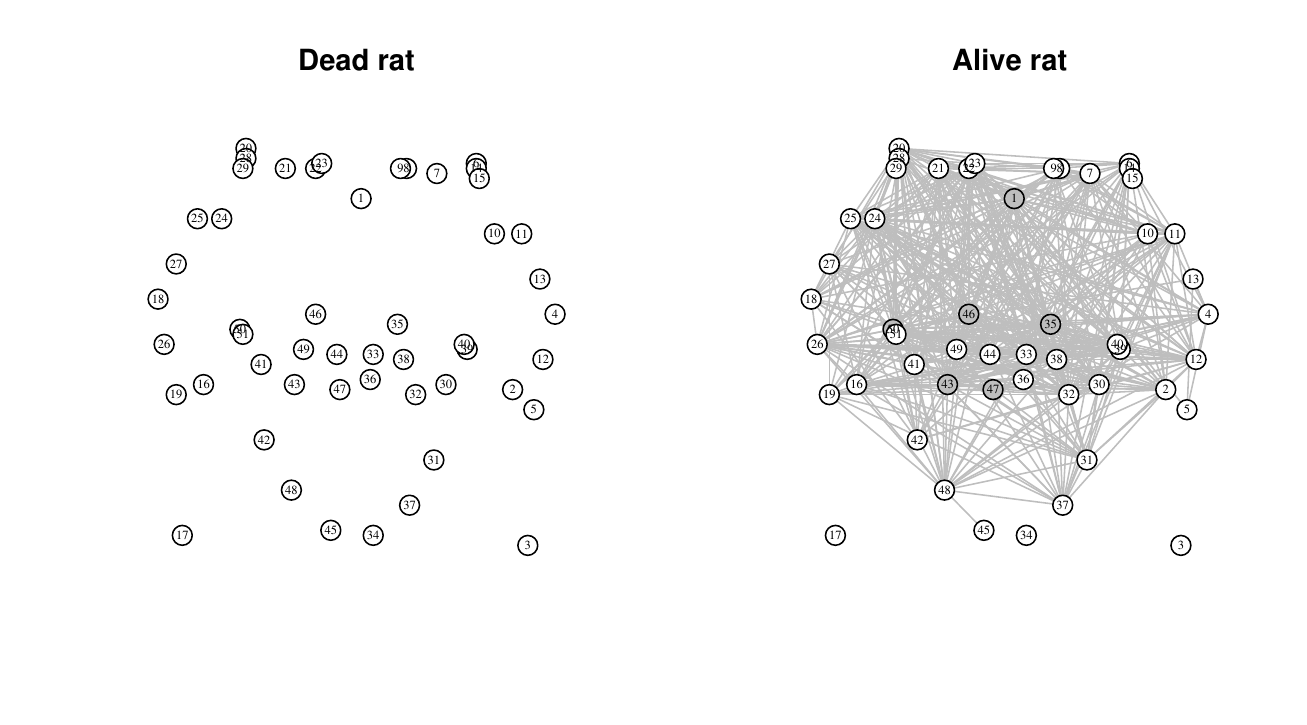}

\vspace{-\baselineskip}

\caption{Inferred graphs of cerebral activity for a dead rat (left) and a live rat (right). Each vertex of the graph corresponds to a brain region. Colored vertices are regions where the LRD parameter $d_a$ is significant, \emph{i.e.} where the null hypothesis $(H_{0\,a}^{(\mathbf{r})})$ is rejected. Two vertices $a, b$ are connected by an edge if the long-run correlation is significant, \emph{i.e.} if the null hypothesis $(H_{0\,a,b}^{(\mathbf{r})})$ is rejected. }
\label{fig:graphs}
\end{figure}

\section*{Conclusion}

In this paper, we consider a multivariate process with long-range dependence properties, with a linear representation. We first establish that the covariance between wavelet coefficients is asymptotically Gaussian. The variance is explicitely given, and the convergence is established under mild assumptions on the wavelet transform and on the process. The asymptotic normality for the wavelet-based Whittle estimators defined in~\cite{AchardGannaz} is also established. 

These results allow to perform statistical tests on the LRD parameters and on the long-run covariance. We propose an application on fMRI data, where we have recordings on a dead rat and alive one. The tests of significance on the LRD parameters and on the long-run correlations highlight that these characteristics are intrinsically linked to brain activity. 

 \appendix

%
%
%

\renewcommand{\appendixname}{Appendix }
\section{Expression of wavelet coefficients}

\label{proof:coeff}

Let $(a,b)\in\{1,\dots,p\}^2$. The objective is to study the asymptotic normality of the sample wavelet covariance $\{\hat \sigma_{a,b}(j_0+u),\,u=1,\dots,\Delta\}$ when $j_0$ goes to infinity. To this end, we introduce a new indexing of wavelet coefficients similar to that in \cite[pages 543 and 544]{Roueff09asymptotic}. This new indexing enables to approximate the sequence of wavelet coefficients with $m$-dependent variables and to use the results on linear decimated processes of \cite{Roueff09central}.
 The section is structured as follows. In \ref{sec:coeff}, we give a linear representation of wavelet coefficients, the new indexing of the coefficients is defined in \ref{sec:indexing}. \ref{sec:mdep} finally introduces the approximation by a $m$-dependent process.

\subsection{Linear representation of wavelet coefficients}

\label{sec:coeff}

Consider a scale $j\geq 0$ and $k\in\Z$.
Define, for all $l\in \Z$, $h_{j,l} = \int_\R \phi(t + l)2^{-j/2}\psi(2^{-j} t) \rmd t$, the discrete wavelet filter associated to $(\phi(\cdot),\psi(\cdot))$. Then under \ref{ass:Wcompact}, the vector of wavelet coefficients $\bW(j,k)$ defined in Section~\ref{sec:W} can be written as \[
\bW(j,k)=\sum_{l\in\Z} h_{j,2^jk-l} \bX(l),
\]
with $\bW(j,k)\in\R^p$.
For all $\lambda\in\R$ let us denote \[\mathbb{H}_j(\lambda) = \sum_{l\in\Z} h_{j,l} \rme^{-\rmi\lambda\,l}=\int_\R \sum_{l\in\Z}\phi(t + l)2^{-j/2}\psi(2^{-j} t) \rmd t,\] the discrete Fourier transform of $\{h_{j,l}, l\in\Z\}$.

Suppose that the multivariate process $\bX=\{X_{a}(k),k\in\Z, a=1,\dots,p\}$ satisfies Assumption \ref{ass:linear}. To express wavelet coefficients, we introduce, for all $\lambda\in(-\pi,\pi)$, 
\begin{equation}
\label{eqn:Aastdef}
\bA^\ast(\lambda) = \diag\bigl((1-\rme^{\rmi\lambda})^{-\bDexp}\bigr) \bA^{(\bDexp)\ast}(\lambda),
\end{equation}
 and $\{\bA(t), t\in\Z\}\in\ell^2(\Z)$ such that
 \begin{equation}
 \label{eqn:Adef}
\bA^\ast(\lambda) =  (2\pi)^{-1/2}\sum_{t\in\Z} \bA(t)\rme^{\rmi\lambda t}.
\end{equation} 
The function $\bA^\ast(\lambda)$ satisfies
\begin{equation}
\label{eqn:A=f}
\bA^\ast(\lambda)\overline{\bA^\ast(\lambda)}^T=\bff(\lambda),
\end{equation}
where $\bff(\cdot)$ is defined in \ref{ass:LRD}.

For all $\lambda\in(-\pi,\pi)$, let 
\begin{equation} 
\label{eqn:Aj}
\bA^\ast(j;\lambda) = \mathbb{H}_j(\lambda)\bA^{\ast}(\lambda),
\end{equation}
with $\bA^\ast(\lambda)$ defined in \eqref{eqn:Aastdef}. Let us also define $\{\bA(j;t), t\in\Z\}\in\ell^2(\Z)$ such that
 \begin{equation}
\bA^\ast(j;\lambda) = (2\pi)^{-1/2} \sum_{t\in\Z} \bA(j;t)\rme^{\rmi\lambda t}.
\end{equation}
Then, the wavelet coefficients can be written as 
\[
\bW({j,k})=\sum_{l\in\Z} \bA(j;2^jk-l){\bepsilon(l)}.
\]

\paragraph*{Implicit differentiation by wavelet representation}
As the wavelet $\psi$ admits $M$ vanishing moments under \ref{ass:Wmoments}, $\mathbb{H}_j$ can be factorized as $\mathbb{H}_j(\lambda)=(1-e^{i\lambda})^M \tilde{\mathbb{H}}_j(\lambda)$, with $\tilde{\mathbb{H}}_j$ trigonometric polynomial, $\tilde{\mathbb{H}}_j(\lambda)=\sum_{t\in\Z}\tilde h_{j,t}e^{it\lambda}$. 
 It results that \[W_a({j,k})=\sum_{l\in\Z} \tilde h_{j,2^jk-l}(\mathbb{L}^{M} X_a)(l).\]

\subsection{New indexing of wavelet coefficients}

\label{sec:indexing}

Let $j\geq 0$ and $k\in\{0,\dots, n_j-1\}$. We introduce the new indexing proposed by \cite{Roueff09asymptotic}. Let $u=j-j_0$, $u\in\{0,\dots,\Delta\}$, and define $(i, s)$ such that $k=2^{\Delta-u}(s-1)+i$, with $i\in\{2^{\Delta-u},\dots, 2^{\Delta-u+1}-1\}$ and $s\in\Z$. We have $2^{j}k=2^{j_1}(s-1+2^{u-\Delta}i)$. Index $i$ varies from $1$ to $N=2^{\Delta+1}-1$ and each couple $(j,k)$ corresponds to a unique couple $(i,s)$. We can rewrite wavelet coefficients as
\[
\bLambda_{j_0}(\bd)^{-1}\bW({j,k})=\sum_{t\in\Z} \bmV^{(i,j_0)}(2^{j_1}s-t)\bepsilon(t)
\]
{ with } $\bLambda_{j_0}(\bd)$ is defined in \eqref{eqn:density} and
\[
\bmV^{(i,j_0)}(t)=\bLambda_{j_0}(\bd)^{-1}\bA(j;2^{j}(i-2^{j_1-j})+t),~ j=j_1-\lfloor\log_2(i)\rfloor,
\]
{ where } $\lfloor\log_2(i)\rfloor=\Delta-u$ is the integer part of $\log_2(i)$. 
Write \[ \bmZ^{(i,s,j_0)}= \sum_{t\in\Z} \bmV^{(i,j_0)}(2^{j_1} s-t)\bepsilon(t)=\bLambda_{j_0}(\bd)^{-1}\bW(j,2^{-[\log_2(i)]}(s-1)+i)\,.\]
$\bW(j,k)$, $\bmV^{(i,j_0)}$, $\bmZ^{(i,s,j_0)}$ belong respectively to $\R^p$, $\R^{p\times p}$ and $\R^p$.

For all $u=0,\dots, \Delta$, denoting $j=j_0+u$, the empirical variance satisfies
\begin{align}
\nonumber \bLambda_{j_0}(\bd)^{-1}\hat \bSigma(j)\bLambda_{j_0}(\bd)^{-1}&=\frac{1}{n_{j}} \sum_{k=0}^{n_{j}-1}\bLambda_{j_0}(\bd)^{-1}\bW({j,k})\,\bW({j,k})^T\bLambda_{j_0}(\bd)^{-1}\\
\label{eqn:defSR}& = \frac{\sqrt{n_{j_1}}}{n_{j}}\sum_{i=2^{\Delta-u}}^{2^{\Delta-u+1}-1}  \underbrace{n_{j_1}^{-1/2}\sum_{s=0}^{n_{j_1}-1} \bmY^{({i,s,j_0})}}_{\bmS^{({i,j_0})}}{+\underbrace{n_{j}^{-1} \sum_{i=2^{\Delta-u}}^{T_\psi\,(2^{\Delta-u}-1)} \bmY^{({i,n_{j_1},j_0})}}_{\bmR(j)}\,,}
\end{align}
with
\[
\bmY^{({i,s,j_0})} = \bmZ^{({i,s,j_0})} \bmZ^{({i,s,j_0})T} =(\mZ_{a}^{(i,s,j_0)}\mZ_{b}^{(i,s,j_0)})_{a,b=1,\dots,p}\,.
\]
Indeed, when $s\in\{0,\dots, n_{j_1}-1\}$ and $i\in\{2^{\Delta-u},\dots, 2^{\Delta-u+1}-1\}$, index $k=2^{\Delta-u}(s-1)+i$ varies in $\{0,\dots, 2^{\Delta-u}n_{j_1}-1\}$, and when 
$s=n_{j_1}$ and $i\in\{2^{\Delta-u},\dots, T_\psi\,(2^{\Delta-u}-1)\}$, index $k$ varies from $2^{\Delta-u}n_{j_1}$ to $2^{\Delta-u}(n_{j_1}-1)+T_\psi\,(2^{\Delta-u}-1)=2^{-j}(N_X-T_\psi+1)-T_\psi=n_{j}-1$. That is,
\begin{multline*}
\{k=0,\dots, n_{j}-1\}=\{k=2^{\Delta-u}(s-1)+i,\;s=0,\dots, n_{j_1}-1, \;i=2^{\Delta-u},\dots, 2^{\Delta-u+1}-1\}\\\cup\{k=2^{\Delta-u}(s-1)+i,\;s=n_{j_1},\; i=2^{\Delta-u}, \dots, T_\psi\,(2^{\Delta-u}-1)\}.
\end{multline*}

The proof of Theorem~\ref{thm:gauss} consists in establishing first the asymptotic normality of $\{\bmS^{(i,j_0)}, i=1,\dots,N\}_{j_0\geq 0}$ when $j_0$ goes to infinity, and second that $(\bmR(j))_{j\geq 0}$ is negligible. To prove the asymptotic normality of $\{\bmS^{(i,j_0)}, i=1,\dots,N\}_{j_0\geq 0}$, we will need to approximate the variables $\{\bmY^{(i,j_0)}, i=1,\dots,N\}_{j_0\geq 0}$ by $m$-dependent variables.

\subsection{Approximation by a $m$-dependent process}
\label{sec:mdep}

Following \cite{Roueff09central}, we introduce a non-negative infinitely differentiable function $H(\cdot)$ defined on $\R$ such that $H(0)=1$ and $H(t)=0$ if $\abs{t}>1/2$. Write $\hat H(\cdot)$ its Fourier transform, $\hat H(\lambda)=\int_{-\infty}^\infty H(t)\rme^{-\rmi\lambda\,t}\rmd t$. Since $H$ is supposed infinitely derivable, when $\abs{\lambda}$ tends to infinity, $\hat H(\lambda)$ decreases to 0 faster than any polynomial. Hence, there exists $c_H>0$ such that $\lvert \hat H(\lambda) \rvert \leq c_H \lvert \lambda \rvert^{-\delta_v-1}$ for all $\abs{\lambda}\geq 1$, with $\delta_v$ defined in Lemma~\ref{lem:v1}. Additionally, $\frac{1}{2\pi}\int_{-\infty}^{\infty}\hat H(\lambda)\,\rmd\lambda=H(0)=1$.

Let us define, for all $t\in\R$, for all $\lambda\in\R$,
\begin{align}
\bmV^{(i,j_0)(m)}(t)&= H(2^{-j_1} t/m)\,\bmV^{(i,j_0)}(t)\,,\\
\bmV^{(i,j_0)(m)\ast}(\lambda)& = (2\pi)^{-1/2}\sum_{t\in\Z}\bmV^{(i,j_0)(m)}(t)e^{-i\lambda t}=\frac{m}{2\pi}\int_{-\infty}^\infty \hat H (m\xi)\bmV^{(i,j)\ast}(\lambda-2^{-j_1}\xi)\,\rmd\xi \,,\\
\bmZ^{(i,s,j_0)(m)}&= \sum_{t\in\Z} \bmV^{(i,j)(m)}(2^{j_1} s -t)\bepsilon(t)\,.
 \end{align}
Then for all $a,b=1,\dots,p$, vectors $\Z_{a,b}^{(s,j_0)(m)}=\begin{pmatrix}
\mZ_{a}^{(1,s,j_0)(m)}, \dots, \mZ_{a}^{(N,s,j_0)(m)}, \mZ_{b}^{(1,s,j_0)(m)}, \dots, \mZ_{b}^{(N,s,j_0)(m)}
\end{pmatrix}^T\in\R^N$ are $m$-dependent relatively to index $s$. That is, for all $q\geq 1$, for all $(s_1, \dots, s_q)$ such that $s_{r+1}\geq s_{r}+m$ for $r=1,\dots,q$,
vectors $\Z_{a,b}^{(s_1,j_0)(m)}$, \ldots, $\Z_{a,b}^{(s_q,j_0)(m)}$ are independent. 

We will next study sequences $\{\bmS^{(i,j_0)(m)}, i=1,\dots,N\}_{j_0\geq 0}$ which are defined as follows:
\begin{align}
\label{eqn:Ym}\bmY^{(i,s,j_0)(m)}&=\bmZ^{(i,s,j_0)(m)}\bmZ^{(i,s,j_0)(m)T},\\
\label{eqn:Sm}\bmS^{(i,j_0)(m)}&=n_{j_1}^{-1/2}\sum_{s=0}^{n_{j_1}-1}\bmY^{(i,s,j_0)(m)}.
\end{align}
The outline of the proof of Theorem~\ref{thm:gauss} is first to prove the asymptotic normality of $\{\bmS^{(i,j_0)(m)}, i=1,\dots,N\}_{j_0\geq 0}$ with the use of the results on decimated $m$-dependent processes of \cite{Roueff09asymptotic}. Next a similar result for $\{\bmS^{(i,j_0)}, i=1,\dots,N\}_{j_0\geq 0}$ is deduced by letting $m$ go to infinity.

\section{Notations and technical lemmas}

\label{proof:v}

This section provides some technical results on the quantities introduced in the wavelet representation and in the approximation by a $m$-dependent process, respectively in \ref{sec:indexing} and \ref{sec:mdep}. These results will be used for the proof of Theorem~\ref{thm:gauss}.

\subsection{Useful inequalities concerning the linear wavelet representation}

We first give two lemmas, respectively on the behavior of the spectral density $f(\cdot)$ and of the function $A^\ast(\cdot)$.

\begin{lem} \label{lem:f}
Suppose \ref{ass:LRD}--\ref{ass:beta} hold. Then there exists $C_f>0$ depending on $L$, $\beta$ and $\bOmega$ such that for all $a,b=1,\dots, p$, for all $\lambda\in(-\pi,\pi)$,
\begin{align*}
\abs{f_{a,b}(\lambda)}\leq C_f\abs{\lambda}^{-d_a-d_b}\,.
\end{align*}
\end{lem}
\begin{proof} Let $(a,b)\in\{1,\dots,p\}^2$ and $\lambda\in(-\pi,\pi)$. By \ref{ass:LRD}, \[ \abs{f_{a,b}(\lambda)}\leq \max_{\ell,m}\abs{\Omega_{\ell,m}}\abs{1-\rme^{\rmi\lambda}}^{-d_a-d_b}\,\abs{f_{a,b}^S(\lambda)}.\]
From Assumption \ref{ass:beta}, $\abs{f^S_{a,b}(\lambda)}\leq L(1+\pi^\beta)$. Additionally, $\abs{1-\rme^{\rmi\lambda}}=\abs{2\sin\bigl(\lambda/2\bigr)}\leq \abs{\lambda}$.
Lemma \ref{lem:f} follows with $C_f=L(1+\pi^\beta)\max_{a,b=1,\dots,p}\abs{\Omega_{a,b}}$.
\end{proof}

\begin{lem}\label{lem:sum_A}
Suppose \ref{ass:LRD}--\ref{ass:linear} hold. Then there exists $C_A>0$ depending on $L$, $\beta$ and $\bOmega$ such that for all $(a,b)\in\{1,\dots,p\}^2$,
\begin{equation}
\label{eqn:sum_A}
\abs{\Bigl({\bA^\ast(\lambda)}\overline{\bA^\ast(\lambda)}^T\Bigr)_{a,b}}\leq C_A\,\abs{\lambda}^{-d_a-d_b}\,.
\end{equation}
\end{lem}
\begin{proof}
The lemma is straightforward combining \eqref{eqn:A=f} and Lemma \ref{lem:f}.
\end{proof}

\subsection{Preliminary results on $(\bmV^{(i,j_0)})$} 
\label{sec:v}

Define
\[
\bmV^{(i,j_0)\ast}(\lambda)=(2\pi)^{-1/2} \sum_{t\in\Z}\bmV^{(i,j_0)}(t)\rme^{-\rmi\lambda t}\,,\quad \lambda\in\R\,.
\]
Observe that $[\bmV^{(i,j_0)}(2^{j_1}s-t)]^\ast(\lambda)=\overline{\bmV^{(i,j_0)\ast}(\lambda)}\rme^{-\rmi\,2^{j_1}s\lambda}$.

For all $i, i'= 1,\dots, N$, for all $\lambda\in\R$, let us define also $\bmW^{(i,i')\ast}(\lambda)=(\mW_{a,b}^{(i,i')\ast}(\lambda))_{a,b=1,\dots,p}$ with
\begin{multline}
\label{eqn:wdef}
 \bmW^{(i,i')\ast}(\lambda)= 2^{(u-\Delta)/2+(u'-\Delta)/2}\,\overline{\hat\psi(2^{u-\Delta}\lambda)}\,{\hat\psi(2^{u'-\Delta}\lambda)}\,\rme^{\rmi(\,2^{u-\Delta}(i-1)-\,2^{u'-\Delta}(i'-1))\lambda}\\
 \diag\bigl(\lvert\lambda\rvert^{-\bdexp}\rme^{-\rmi\,\sign(\lambda)\,\pi\bdexp/2}\bigr)\bLambda_\Delta(\bd)\,\bOmega\,\bLambda_\Delta(\bd) \diag\bigl(\lvert\lambda\rvert^{-\bdexp}\rme^{\rmi\,\sign(\lambda)\,\pi\bdexp/2}\bigr),
\end{multline}
where $u=\Delta-\lfloor \log_2(i)\rfloor$, $u'=\Delta-\lfloor \log_2(i')\rfloor$.

We begin by providing some results on the behavior of $(\bmV^{(i,j_0)\ast}(\cdot))$ in a sequence of lemmas.
\begin{lem} \label{lem:v1}
Suppose assumptions of Theorem~\ref{thm:gauss1} hold. Suppose $\Delta <\infty$. Then there exists $\delta_v>1/2$ such that for all $j\geq j_0$, $j-j_0\leq \Delta$,  we have
\begin{equation}
\label{eqn:sup_v_ast}
\sup_{|\lambda|<\pi}\norm{\bmV^{(i,j_0)\ast}(\lambda)\overline{\bmV^{(i,j_0)\ast}(\lambda)}^T}_\infty\leq C_v\,  2^{j}(1+2^j\lvert\lambda\rvert)^{-2\delta_v},
\end{equation}
with $j=j_0+\Delta-\lfloor\log_2(i)\rfloor$ and $C_v=C_A\,C_{H1}^2\,2^{\Delta(d_a+d_b)} < \infty$, depending on $L$, $\beta$, $\bOmega$, $\bd$, $\Delta$, $\phi(\cdot)$ and $\psi(\cdot)$.
\end{lem}
\begin{proof} 
Recall that the Fourier transform $\bA^\ast(\cdot)$ was defined in \eqref{eqn:Aastdef}.
Observe that
\[
\bmV^{(i,j_0)\ast}(\lambda)\overline{\bmV^{(i,j_0)\ast}(\lambda)}^T=  (2\pi)^{-1}\abs{\H_{j}(\lambda)}^2\,\bLambda_{j_0}(\bd)^{-1}{\bA^\ast(\lambda)}\overline{\bA^\ast(\lambda)}^T\bLambda_{j_0}(\bd)^{-1}\,.
\]
Lemma~\ref{lem:sum_A} yields
\[
\abs{\bigl(\bmV^{(i,j_0)\ast}(\lambda)\overline{\bmV^{(i,j_0)\ast}(\lambda)}^T\bigr)_{a,b}}\leq C_A\, (2\pi)^{-1}\abs{\H_{j}(\lambda)}^2\, 2^{({j-j_0})(d_a+d_b)} \abs{2^j\lambda}^{-d_a-d_b}\,.
\]
From \eqref{eqn:Hj} we get, for all $a,b=1,\dots,p$,
\begin{multline*}
\abs{\bigl(\bmV^{(i,j_0)\ast}(\lambda)\overline{\bmV^{(i,j_0)\ast}(\lambda)}^T\bigr)_{a,b}}\\\leq C_AC_{H1}^2\,  2^{(j-j_0)(d_a+d_b)}\,2^{j}\left(\frac{2^j\abs{\lambda}}{1+2^j\abs{\lambda}}\right)^{2M-d_a-d_b}(1+2^j\abs{\lambda})^{-2\alpha-d_a-d_b}.
\end{multline*}
Therefore, since $d_a+d_b<2\,M$ and $0\leq\frac{2^j\abs{\lambda}}{1+2^j\abs{\lambda}}\leq 1$,
\begin{equation}
\abs{\bigl(\bmV^{(i,j_0)\ast}(\lambda)\overline{\bmV^{(i,j_0)\ast}(\lambda)}^T\bigr)_{a,b}}\leq C_AC_{H1}^2\,  2^{(j-j_0)(d_a+d_b)}\,2^{j}(1+2^j\lvert\lambda\rvert)^{-2\alpha-d_a-d_b}.
\end{equation}
Lemma~\ref{lem:v1}, hence, holds with $C_v=C_AC_{H1}^2\,  2^{(j-j_0)M}$ and $\delta_v=\alpha+\max_{a=1,\dots,p} d_a$. Assumption \ref{ass:Wparameters} ensures that $\delta_v>1/2$.
\end{proof}

The following lemma provides some convergence results on $(\bmV^{(i,j_0)\ast}(\cdot) )$.
\begin{lem} 
\label{lem:v}
For all $i=1,\dots, N$, for all $\lambda\in\R$, there exist
$\bPhi^{(i,j_0)}(\lambda)\in(-\pi,\pi)^{p\times p}$ and $\bmV^{(i,\infty)\ast}(\lambda)\in\C^{p\times p},$ such that 
\begin{gather}
\label{eqn1ii} 
2^{-j_1/2}{\bmV^{(i,j_0)\ast}}(2^{-j_1}\lambda)\rme^{-\rmi\bPhi^{(i,j_0)}(2^{-j_1}\lambda)} \tend_{j_0\to \infty} \bmV^{(i,\infty)\ast}(\lambda)\,,\\
\label{eqn1iii} 
2^{-j_1}\bmV^{(i,j_0)\ast}(2^{-j_1}\lambda)\overline{\bmV^{(i',j_0)\ast}(2^{-j_1}\lambda)}^T \tend_{j_0\to\infty} \bmW^{(i,i')\ast}(\lambda)\,,
\end{gather}
where $\bmW^{(i,i')\ast}(\lambda)$ is defined in \eqref{eqn:wdef}.
\end{lem}
\begin{proof}
\item 
\paragraph[Proof of convergence]{Proof of \eqref{eqn1ii}}
Let $\bPhi^{(i,j_0)}(\lambda)$ be the arguments of $\bmV^{(i,j_0)\ast}(\lambda)$. Let $(a,b)\in\{1,\dots,p\}^2$. From \eqref{eqn:Hjapprox}, we have
\begin{multline}
\label{eqn:vj}
\left\lvert2^{-j_1/2}\mV_{a,b}^{(i,j_0)\ast}(2^{-j_1}\lambda)\rme^{-\rmi\Phi_{a,b}^{(i,j_0)}(2^{-j_1}\lambda)}-2^{-\Delta/2+\Delta d_a}\abs{\hat\phi(2^{-j_1}\lambda)\hat\psi(2^{u-\Delta}\lambda)2^{-j_1d_a}A^\ast_{a,b}(2^{-j_1}\lambda)}\right\rvert\\
\leq C_{H2} 2^{j(1/2-\alpha-M)+(u-\Delta)M}\lvert\lambda\rvert^M 2^{-\Delta/2+\Delta d_a}\abs{2^{-j_1d_a}A^\ast_{a,b}(2^{-j_1}\lambda)}.
\end{multline}
Lemma \ref{lem:sum_A} gives the inequality \[
\abs{2^{-j_1d_a}A^\ast_{a,b}(2^{-j_1}\lambda)}^2\leq 2^{-2\,j_1d_a}\sum_{a'=1}^p\abs{A^\ast_{a,a'}(2^{-j_1}\lambda)}^2 \leq C_A\lvert\lambda\rvert^{-2d_a} \,,
\]
for all $a,b=1,\dots,p$.
Hence, \begin{multline*}
\left\lvert2^{-j_1/2}\mV_{a,b}^{(i,j_0)\ast}(2^{-j_1}\lambda)\rme^{-\rmi\Phi_{a,b}^{(i,j_0)}(2^{-j_1}\lambda)}-2^{-\Delta/2+\Delta d_a}\abs{\hat\phi(2^{-j_1}\lambda)\hat\psi(2^{u-\Delta}\lambda)2^{-j_1d_a}A^\ast_{a,b}(2^{-j_1}\lambda)}\right\rvert\\
\leq C_{H2}C_A \lvert\lambda\rvert^{M-2d_a} 2^{-\Delta/2+\Delta d_a+(u-\Delta)M}\, 2^{j(1/2-\alpha-M)}.
\end{multline*}
Since $1/2-\alpha-M<0$, we obtain that the right-hand side goes to 0 when $j_0$ goes to infinity. By continuity, $\abs{\hat \phi(2^{-j_1}\lambda)}$ tends to $\abs{\hat{\phi}(0)}=1$ when $j_0$ goes to infinity. We conclude~\eqref{eqn1ii} by Assumption \ref{ass:Aast}, which supposes that $2^{-j_1 \,d_a}A_{a,b}^\ast(2^{-j_1}\lambda)$ converges when $j_1$ goes to infinity.
\item 
\paragraph[Proof of cross convergence]{Proof of \eqref{eqn1iii}}
 By equality \eqref{eqn:A=f}, we get
\begin{align*}
\begin{split}
\Bigl\lVert{2^{-j_1}\bmV^{(i,j_0) \ast}(2^{-j_1}\lambda)\overline{\bmV^{(i',j_0)\ast}(2^{-j_1}\lambda)}^T}- 2^{-j_1+j/2+j'/2}\lvert \hat\phi(2^{-j_1}\lambda)\rvert^2\,\overline{\hat\psi(2^{j-j_1}\lambda)}{\hat\psi(2^{j'-j_1}\lambda) }\\
{\bLambda_{j_0}(\bd)^{-1}{\bff(2^{-j_1}\lambda)}\bLambda_{j_0}(\bd)^{-1}\rme^{\rmi(\,2^{u-\Delta}i-\,2^{u'-\Delta}i')\lambda}\Bigr\rVert_\infty}
\end{split}\\
\lefteqn{\leq 2^{-j_1}\Bigl\lVert{\H_{j}(2^{-j_1}\lambda)}\overline{\H_{j'}(2^{-j_1}\lambda)}}\\ 
&\pushright{ -2^{j/2+j'/2}\lvert \hat\phi(2^{-j_1}\lambda)\rvert^2\, \overline{\hat\psi(2^{j-j_1}\lambda)}{\hat\psi(2^{j'-j_1}\lambda) }\,{\bLambda_{j_0}(\bd)^{-1}{\bff(2^{-j_1}\lambda)}\bLambda_{j_0}(\bd)^{-1}}\Bigr\rVert_\infty}
\end{align*}
Inequality \eqref{eqn:Hj2approx} gives that the right-hand side can be bounded by
\[
C_{H3}\, 2^{-j_1}2^{(j+j')(1/2-\alpha)}2^{(u+u'-2\Delta)M}\abs{\lambda}^{2M}\norm{\bLambda_{j_0}(\bd)^{-1}{\bff(2^{-j_1}\lambda)}\bLambda_{j_0}(\bd)^{-1}}_\infty.
\]
With Lemma \ref{lem:f}, the bound becomes
\[
\max_{a,b=1,\dots,p}\,C_{H3}C_f\, 2^{-(2j_0+u+u')(\alpha)}\,2^{(u+u'-2\Delta)(M+1/2)}\,2^{\Delta(d_a+d_b)}\abs{\lambda}^{2M-d_a-d_b}\,.
\]
This term goes to $0$ when $j_0$ goes to infinity uniformly for $\lambda\in(-\pi,\pi)$.

As $\abs{\hat \phi(2^{-j_1}\lambda)}\tend_{j_0\to\infty}\abs{\hat{\phi}(0)}=1$ and as $\bff(2^{-j_1}\lambda)$ satisfies approximation \eqref{eqn:approx}, we obtain convergence~\eqref{eqn1iii}.
\end{proof}

We introduce some useful notations. For $(i,i')\in\{1,\dots,N\}^2$, $t\in\Z$, $\lambda\in\R$, let
\begin{align}
\tilde{\bmV}^{(i,\infty)}(t)&=\frac{1}{\sqrt{2\pi}}\int \bmV^{(i,\infty)\ast}(\lambda)\rme^{\rmi\, \lambda\,t}\rmd\,\lambda, \\
\label{eqn:wtilde}\tilde{\bmW}^{(i,i')\ast}(\lambda)&=\sum_{t\in\Z}\bmW^{(i,i')\ast}(\lambda+2t\pi), 
\end{align}
where $\bmV^{(i,\infty)\ast}(\cdot)$ and $\bmW^{(i,i')\ast}(\cdot)$ have been defined respectively in \eqref{eqn1ii} and in \eqref{eqn:wdef}.
We also define, for $(s,s')\in\{1,\dots,N\}^2$, 
 $(a,b,a',b')\in\{1,\dots,p\}^4$,
\begin{align}
\label{eqn:Sigma} \bTheta^{((i,s),(i',s'))}&= \int_{-\infty}^\infty \bmW^{(i,i')\ast}(\lambda)\rme^{-\rmi(s-s')\lambda}\rmd\lambda,\\
\label{eqn:Gamma}\Gamma_{(a,b),(a',b')}^{(i,i')}&=  2\pi\int_{-\pi}^\pi \overline{\tilde{\mW}_{a,a'}^{(i,i')\ast}(\lambda)}\tilde{\mW}_{b,b'}^{(i,i')\ast}(\lambda)\rmd\lambda {+ 2\pi\int_{-\pi}^\pi \overline{\tilde{\mW}_{a,b'}^{(i,i')\ast}(\lambda)}{\tilde{\mW}_{b,a'}^{(i,i')\ast}(\lambda)}\rmd\lambda}.
\end{align} 

We can first state a result on $\bTheta$. This result is useful to show that $\bGamma^{(i,i')}$ is the asymptotic covariance matrix of $(\bmS^{(i,j_0)}, \bmS^{(i',j_0)})$ as $j_0$ goes to infinity. This will be proved in Lemma~\ref{lem:cov}.

\begin{lem}\label{lem:theta}
For all $(i,i')\in\{1,\dots,N\}^2$, 
\begin{gather}
\label{eqnthetai}\forall j_0\geq 0, \forall a=1,\dots, p, ~ \sup_{s\in\Z}\sum_{t\in\Z} \sum_{a'=1}^p  \mV_{a,a'}^{(i,j_0)}(2^{j_1}s-t)^2  < \infty\,, \\
\label{eqn1iv}\forall (s,s')\in\Z^2, ~ \sum_{t\in\Z} \bmV^{(i,j_0)}(2^{j_1}s-t)\bmV^{(i',j_0)}(2^{j_1}s'-t)^T  \tend_{j_0\to\infty}  \bTheta^{((i,s),(i',s'))}\,,
\end{gather} 
with $\bTheta^{((i,s),(i',s'))}$ defined in \eqref{eqn:Sigma}.

Moreover, for all $(a,b)\in\{1,\dots,p\}^2$, for all $i\in\{1,\dots,N\}$,
\begin{equation}
\label{eqnthetaii}
\sup_{t\in\Z} \sum_{s\in\Z} {\mV_{a,b}^{(i,j_0)}(2^{j_1}s-t)}^2 \tend_{j_0\to\infty} 0\,.
\end{equation}
\end{lem}
\begin{proof}
\item 
\paragraph[Proof of dominance]{Proof of \eqref{eqnthetai}}
By Parseval's identity and a change of variable, for all $(a,a')\in\{1,\dots,p\}^2$,
\begin{equation*}
\sum_{t\in\Z}  \mV_{a,a'}^{(i,j_0)}(2^{j_1}s-t)^2   =  \int_{-\pi}^\pi \lvert \mV_{a,a'}^{(i,j_0)\ast}(\lambda)\rvert^2\,d\lambda \leq {C_{v}}\, \int_{-\infty}^\infty (1+\lvert\lambda\rvert)^{-2\delta_v}\,\rmd\lambda\,,
\end{equation*}
which implies \eqref{eqnthetai}. 
\item 
\paragraph[Proof of cross-convergence]{Proof of \eqref{eqn1iv}}
Applying Parseval's theorem and the change of variable $\lambda\to 2^{j_1}\lambda$, we get the equality
\begin{multline}
{\sum_{t\in\Z} \bmV^{(i,j_0)}(2^{j_1}s-t){\bmV^{(i',j_0)}(2^{j_1}s'-t)}^T}\\
= \int_{-2^{j_1}\pi}^{2^{j_1}\pi} 2^{-j_1}{\bmV^{(i,j_0)\ast}(2^{-j_1}\lambda)}\overline{\bmV^{(i',j_0)\ast}(2^{-j_1}\lambda)}^T \rme^{\rmi(s-s')\lambda}\,\rmd\lambda.
\end{multline} 
The function under the integral converges to ${\bmW^{(i,i')\ast}(\lambda)}\rme^{\rmi(s-s')\lambda}$ by \eqref{eqn1iii}. Convergence under the integral can be applied thanks to dominated convergence, by Lemma~\ref{lem:v1}.  This gives \eqref{eqn1iv}.
\item 
\paragraph[Proof of convergence]{Proof of \eqref{eqnthetaii}}
Observe that
\begin{equation*}
\mV^{(i,j_0)}_{a,b}(2^{j_1}s-t) = \frac{1}{\sqrt{2\pi}} \int_{-\pi}^\pi  \overline{\mV^{(i,j_0)\ast}_{a,b}(\lambda)}\rme^{-\rmi (2^{j_1}s+t)\lambda} \rmd\lambda .
\end{equation*}
Since the function $\lambda\to \mV^{(i,j_0)\ast}_{a,b}(\lambda)$ is $2\pi$-periodic,\cite[Lemma 4]{Roueff09central}, leads to
\begin{align*}
\mV^{(i,j_0)}_{a,b}(2^{j_1}s-t) & = \frac{1}{\sqrt{2\pi}}\int_{-\pi}^\pi  2^{-j_1}\sum_{l=0}^{2^j-1}  \mV^{(i,j_0)\ast}_{a,b}2^{-j_1}(\lambda+2\pi l))\rme^{-\rmi (2^{j_1}s-t)(2^{-j_1}(\lambda+2\pi l)} \rmd\lambda\\
&= \frac{1}{\sqrt{2\pi}}\int_{-\pi}^\pi  \bigl[ 2^{-j_1}\sum_{l=0}^{2^{j_1}-1}  \mV^{(i,j_0)\ast}_{a,b}2^{-j_1}(\lambda+2\pi l))\rme^{-\rmi\, t\, 2^{-j_1}(\lambda+2\pi l)}\bigr] \rme^{\rmi s\lambda} \rmd\lambda.
\end{align*}
Parseval's identity entails that
\begin{equation*}
\sum_{s\in\Z} \abs{\mV^{(i,j_0)}_{a,b}(2^{j_1}s-t)}^2 = \int_{-\pi}^\pi  \Bigl\lvert2^{-j_1}\sum_{l=0}^{2^{j_1}-1}  \mV^{(i,j_0)\ast}_{a,b}(2^{-j_1}(\lambda+2\pi l))\rme^{-\rmi\, t \,2^{-j_1}(\lambda+2\pi l)}\Bigr\rvert^2 \rmd\lambda.
\end{equation*}
Hence,
\begin{equation*}
\sum_{s\in\Z} \abs{\mV^{(i,j_0)}_{a,b}(2^{j_1}s-t)}^2 \leq 2^{-j_1} \int_{-\pi}^\pi  \Bigl(\sum_{l=0}^{2^{j_1}-1}  2^{-j_1/2}\abs{\mV^{(i,j_0)\ast}_{a,b}(2^{-j_1}(\lambda+2\pi l))}\Bigr)^2 \rmd\lambda.
\end{equation*}
Lemma \ref{lem:v1} implies that
\begin{equation*}
\sum_{s\in\Z} \abs{\mV^{(i,j_0)}_{a,b}(2^{j_1}s-t)}^2 \leq C_v\,2^{-j_1} \int_{-\pi}^\pi  \Bigl(\sum_{l=0}^{2^{j_1}-1}  (1 + \abs{\lambda+2\pi l})^{-\delta_v}\Bigr)^2 \rmd\lambda.
\end{equation*}
We can deduce the following inequalities
\begin{align*}
\sum_{s\in\Z} \abs{\mV^{(i,j_0)}_{a,b}(2^{j_1}s-t)}^2 &\leq C_v\,2^{-j_1} \int_{-\pi}^\pi  \Bigl(1+\sum_{l=0}^{2^{j_1}-2}  (1 + \abs{2\pi l})^{-\delta_v}\Bigr)^2 \rmd\lambda\\
&\leq C_v\,2^{-j_1}\, 2\pi  \Bigl(1+\int_{0}^{2^{j_1}-2}  (1 + \abs{2\pi \xi})^{-\delta_v}\rmd\xi\Bigr)^2 \\
&\leq C_v\,2^{-j_1}\, 2\pi\Bigl(1+(1+\abs{2\pi\,2^{j_1}})^{1-\delta_v}\Bigr)^2\\
&\leq C_v 4\pi 2^{-j_1}\,\Bigl(1+ (2\pi)^2\,2^{j_1(2-2\delta_v)}\Bigr).
\end{align*}
The right-hand side goes to 0 when $j_1$ goes to infinity since $\delta_v>1/2$.
\end{proof}

\subsection{Preliminary results on the $m$-dependent processes}

We define similar quantities than in \ref{sec:v} in the $m$-dependent setting. That is,
\begin{align}
\bmV^{(i,\infty)(m)}(t)&= H(2^{-j_1} t/m)\,\bmV^{(i,\infty)}(t)\,,\\
\bmV^{(i,\infty)(m)\ast}(\lambda)& = (2\pi)^{-1/2}\sum_{t\in\Z}\bmV^{(i,\infty)(m)}(t)e^{-i\lambda t}\,,\\
\label{eqn:wm}\bmW^{(i,i')(m)\ast}(\lambda)&= {\bmV^{(i,\infty)(m)\ast}(\lambda)}\overline{\bmV^{(i',\infty)(m)\ast}(\lambda)}^T,\\
\label{eqn:wmtilde} \tilde{\bmW}^{(i,i')(m)\ast}(\lambda)&= \sum_{t\in\Z}\bmW^{(i,i')\ast}(\lambda+2t\pi).
\end{align}
We also denote
\begin{equation}
\label{eqn:Sigma_m} \bTheta^{((i,s),(i',s'))(m)} = \int_{-\infty}^\infty \bmW^{(i,i')(m)\ast}(\lambda)\,\rme^{-\rmi(s-s')\lambda}\,\rmd\lambda,
\end{equation}
\begin{equation}
\label{eqn:Gamma_m}\Gamma_{(a,b),(a',b')}^{(i,i')(m)} =  2\pi\int_{-\pi}^\pi \overline{\tilde{\mW}_{a,a'}^{(i,i')(m)\ast}(\lambda)}\tilde{\mW}_{b,b'}^{(i,i')(m)\ast}(\lambda)\rmd\lambda {+ 2\pi\int_{-\pi}^\pi \overline{\tilde{\mW}_{a,b'}^{(i,i')(m)\ast}(\lambda)}{\tilde{\mW}_{b,a'}^{(i,i')(m)\ast}(\lambda)}\rmd\lambda}.
\end{equation}
We will prove in \ref{sec:proof:thm:gauss1:final} that $\bGamma^{(i,i')(m)}$ is the asymptotic covariance matrice of $(\bmS^{(i,j_0)(m)}, \bmS^{(i',j_0)(m)})$ as $j_0$ goes to infinity. 

We now provide some general results on the behavior of $(\bmV^{(i,j_0)(m)})$ and $(\bmV^{(i,j_0)(m)\ast})$, in much the same way as in Lemma~\ref{lem:v1}, Lemma \ref{lem:v} and Lemma \ref{lem:theta} for $(\bmV^{(i,j_0)})$ and $(\bmV^{(i,j_0)\ast})$.
\begin{lem}
\label{lem:vm1}
Suppose assumptions of Theorem~\ref{thm:gauss1} hold. Suppose $\Delta <\infty$. Then there exists $\delta_v>1/2$ such that for all $j\geq j_0$, $j-j_0\leq \Delta$,  we have
\begin{equation}
\label{eqn:vm_ast}
\norm{\bmV^{(i,j_0)(m)\ast}(\lambda)}_\infty\leq C_{vm}\, 2^{j/2}\,(1+2^{j}\lvert\lambda\rvert)^{-\delta_v}\,.
\end{equation}
with $j=j_0+\Delta-\lfloor\log_2(i)\rfloor$ and $C_{vm} < \infty$, depending on $m$, $L$, $\beta$, $\bOmega$, $\bd$, $\Delta$, $\phi(\cdot)$ and $\psi(\cdot)$.
\end{lem}
\begin{proof}
The lemma follows from Lemma \ref{lem:v1} and \cite[Lemma 5]{Roueff09central}.
\end{proof}

\begin{lem}\label{lem:vm}
Suppose assumptions of Theorem~\ref{thm:gauss1} hold. 
For all $i=1,\dots, N$, for all $m\geq 1$, sequences $\{\mV_{a,b}^{(i,j_0)(m)},\, a,b=1,\dots,p,\, j_0\geq 0\}$ verify the following properties:
\begin{equation}
\label{eqn2i}\forall j_0\geq 0, \forall a=1,\dots, p, \, \sum_{t\in\Z} \sum_{a'=1}^p  \mV_{a,a'}^{(i,j_0)(m)}(2^{j_1}s-t)^2  < \infty\,, \\
\end{equation}
for all $(a,b)\in\{1,\dots,p\}^2$, for all $i\in\{1,\dots,N\}$,
\begin{equation}
\sup_{t\in\Z} \sum_{s\in\Z} {\mV_{a,b}^{(i,j_0)(m)}(2^{j_1}s-t)}^2 \tend_{j_0\to\infty} 0\,.
\end{equation}

Moreover for all $m\geq 1$, for all $(a,b)\in\{1,\dots,p\}^2$, for all $(i,i')\in\{1,\dots,N\}^2$ and $(s,s')\in\{0, \dots, n_{j_1}-1\}^2$, 
\begin{equation}
\label{eqn2iii} \sum_{t\in\Z} \bmV^{(i,j_0)(m)}(2^{j_1}s-t)\bmV^{(i',j_0)(m)}(2^{j_1}s'-t)^T  \tend_{j_0\to\infty}  \bTheta^{((i,s),(i',s'))(m)}\,,
\end{equation}
with $\bTheta^{((i,s),(i',s'))(m)}$ defined in \eqref{eqn:Sigma_m}.
\end{lem}
The proof is similar to that of Lemma~\ref{lem:theta} and it is thus omitted.

\subsection{Asymptotic variance of $(\bmS^{(i,j_0)},~i=1,\dots,N)_{j_0\geq 0}$}

 Lemma~\ref{lem:Sigma_m} below studies the behavior of $\{\bTheta^{((i,s),(i',s'))}, i,i'=1,\dots,N, s,s'\geq 0\}$ and $\{\bTheta^{((i,s),(i',s'))(m)}, i,i'=1,\dots,N, s,s'\geq 0\}$ when summing over the parameters $(s,s')$. It is used next to prove Lemma~\ref{lem:cov} which establishes that the asymptotic covariances of $(\bmS^{(i,j_0)},~i=1,\dots,N)_{j_0\geq 0}$ are equal to $(\bGamma^{(i,i')},~i,i'=1,\dots,N)$ when $j_0$ goes to infinity.

\begin{lem}\label{lem:Sigma_m}
Suppose conditions of Theorem \ref{thm:gauss} hold. For all $(a,b,a',b')\in\{1,\dots,p\}^4$, for all $(i, i')\in\{1,\dots,N\}$,
\begin{align}
\label{eqn:cvgceSigma}
\lim_{\ell\to\infty} \ell^{-1}\sum_{s,s'=0,\dots,\ell-1}\Theta_{a,b}^{((i,s),(i',s'))}\Theta_{a',b'}^{((i,s),(i',s'))} & = 2\pi\int_{-\pi}^\pi \overline{\tilde{\mW}_{a,b}^{(i,i')\ast}(\lambda)}\tilde{\mW}_{a',b'}^{(i,i')\ast}(\lambda)\rmd\lambda,\\
\lim_{\ell\to\infty} \ell^{-1}\sum_{s,s'=0,\dots,\ell-1}\Theta^{((i,s),(i',s'))(m)}_{a,b}\Theta^{((i,s),(i',s'))(m)}_{a',b'}
& = 2\pi\int_{-\pi}^\pi \overline{\tilde{\mW}_{a,b}^{(i,i')(m)\ast}(\lambda)}\tilde{\mW}_{a',b'}^{(i,i')(m)\ast}(\lambda)\rmd\lambda,\\
\label{eqn:cvgceSigma_m}
\end{align}
where $\tilde{\mW}_{a',b'}^{(i,i')\ast}(\cdot)$ and $\tilde{\mW}_{a',b'}^{(i,i')(m)\ast}(\cdot)$ are defined respectively in \eqref{eqn:wtilde} and in \eqref{eqn:wmtilde}.
\end{lem}
\begin{proof} We only prove \eqref{eqn:cvgceSigma}, since the proof of \eqref{eqn:cvgceSigma_m} is similar.
Quantity $\bTheta^{((i,s),(i',s'))}$ can be written as
\begin{align*}
\bTheta^{((i,s),(i',s'))}&= \int_{-\pi}^\pi \tilde \bmW^{(i,i)\ast}(\lambda)\,\rme^{-\rmi\,(s-s')\lambda}\,\rmd\lambda,
\end{align*}
Hence, setting $v=s-s'$,
\begin{multline*}
\ell^{-1}\sum_{s,s'=0,\dots,\ell-1}\Theta_{a,b}^{((i,s),(i',s'))}\Theta_{a',b'}^{((i,s),(i',s'))}\\
=\sum_{v\in\Z}\ell^{-1}(\ell-\lvert v\rvert)_+ \left(\int_{-\pi}^\pi \tilde{\mW}_{a,b}^{(i,i')\ast}(\lambda)\,\rme^{-\rmi\,v\,\lambda}\,\rmd\lambda\right)\left(\int_{-\pi}^\pi \tilde{\mW}_{a',b'}^{(i,i')\ast}(\lambda)\,\rme^{-\rmi\,v\,\lambda}\,\rmd\lambda\right)\,,
\end{multline*}
with $(\ell-\lvert v\rvert)_+=\ell-\lvert v\rvert$ if $\ell-\lvert v\rvert\geq 0$ and $0$ otherwise. Lemma \ref{lem:conv_sum} entails that, when $\ell$ goes to infinity, the above term converges to  
\[
2\pi\int_{-\pi}^\pi \overline{\tilde{\mW}_{a,b}^{(i,i')\ast}(\lambda)}\tilde{\mW}_{a',b'}^{(i,i')\ast}(\lambda)\rmd\lambda\,.
\]
This is precisely the assertion of the lemma.
\end{proof}

We can deduce from Lemma~\ref{lem:Sigma_m} that for all $i,i'=1,\dots, N$, $\bGamma^{(i,i')}$ is the asymptotic covariance between $\bmS^{(i,j_0)}$ and $\bmS^{(i',j_0)}$.

\begin{lem}\label{lem:cov}
For all $m\geq 1$, for all $(a,b,a',b')\in\{1,\dots,p\}^4$, for all $(i,i')\in\{1,\dots,N\}^2$, 
\begin{equation}
\lim_{j_0\to\infty}\cov(\mS^{(i,j_0)}_{a,b},\mS^{(i',j_0)}_{a',b'})=\Gamma^{(i,i')}_{(a,b),(a',b')}\,,
\end{equation}
with $\bGamma^{(i,i')}$ defined in \eqref{eqn:Gamma}.
\end{lem}
\begin{proof} We first decompose $\cov(\mS^{(i,j_0)}_{a,b},\mS^{(i',j_0)}_{a',b'})$ in two terms and next study separately the two terms.
\item 
\paragraph[Step 1]{Step 1. Decomposition of $\cov(\mS^{(i,j_0)}_{a,b},\mS^{(i',j_0)}_{a',b'})$}
Easy calculation shows that
\begin{equation*}
{\cov(\bmY^{({i,s,j_0})},\bmY^{({i',s',j_0})})}
= T^{((i,s),(i',s'),j_0)}_{(a,a'),(b,b')}+T^{((i,s),(i',s'),j_0)}_{(a,b'),(b,a')} - R^{((i,s),(i',s'),j_0)}_{(a,b),(a',b')},
\end{equation*}
with
\begin{multline*}
{T^{((i,s),(i',s'),j_0)}_{(a,a'),(b,b')}}
 =\left(\sum_{t_1\in\Z} \bigl(\bmV(2^{j_1}s-t_1)\bmV^{(i',j_0)}(2^{j_1}s'-t_1)^T\bigr)_{a,a'}\right) \\ \left(\sum_{t_2\in\Z}\bigl(\bmV^{(i,j_0)}(2^{j_1}s-t_2)\bmV^{(i',j_0)}(2^{j_1}s'-t_2)^T\bigr)_{b,b'}\right),
\end{multline*}
a similar expression for $T^{(i,i',s,s',j_0)}_{(a,b'),(b,a')}$, 
and
\begin{align*}
\lefteqn{R^{((i,s),(i',s'),j_0)}_{(a,b),(a',b')}}\\
 & =\sum_{a_1,a_2,a_3,a_4=1,\dots,p} \mu_{a_1,a_2,a_3,a_4}\,\sum_{t\in\Z} \mV_{a,a_1}^{(i,j_0)}(2^{j_1}s-t)\mV_{b,a_2}^{(i,j_0)}(2^{j_1}s-t)\mV_{a',a_3}^{(i',j_0)}(2^{j_1}s'-t)\\
&\pushright{\,\mV^{(i',j_0)}_{b',a_4}(2^{j_1}s'-t)}\\
&- \sum_{a_1,a_2=1,\dots,p} \sum_{t\in\Z} \mV^{(i,j_0)}_{a,a_1}(2^{j_1}s-t)\mV^{(i,j_0)}_{b,a_1}(2^{j_1}s-t){\mV^{(i',j_0)}_{a',a_2}(2^{j_1}s'-t)\mV^{(i',j_0)}_{b',a_2}(2^{j_1}s'-t)}\\
& - \sum_{a_1,a_2=1,\dots,p} \sum_{t\in\Z} \mV^{(i,j_0)}_{a,a_1}(2^{j_1}s-t)\mV^{(i,j_0)}_{b,a_2}(2^{j_1}s-t){\mV^{(i',j_0)}_{a',a_1}(2^{j_1}s'-t)\mV^{(i',j_0)}_{b',a_2}(2^{j_1}s'-t)}\\
& - \sum_{a_1,a_2=1,\dots,p} \sum_{t\in\Z} \mV^{(i,j_0)}_{a,a_1}(2^{j_1}s-t)\mV^{(i,j_0)}_{b,a_2}(2^{j_1}s-t){\mV^{(i',j_0)}_{a',a_2}(2^{j_1}s'-t)\mV^{(i',j_0)}_{b',a_1}(2^{j_1}s'-t)\,.}
\end{align*}
Hence,
\begin{multline}
\label{eqn:covS}
{\cov(\bmS^{({i,j_0})},\bmS^{({i',j_0})})}
= \frac{1}{n_{j_1}}\sum_{s=0}^{n_{j_1}-1}\sum_{s'=0}^{n_{j_1}-1}T^{((i,s),(i',s'),j_0)}_{(a,a'),(b,b')} +\frac{1}{n_{j_1}}\sum_{s=0}^{n_{j_1}-1}\sum_{s'=0}^{n_{j_1}-1}T^{((i,s),(i',s'),j_0)}_{(a,b'),(b,a')}\\
- \frac{1}{n_{j_1}}\sum_{s=0}^{n_{j_1}-1}\sum_{s'=0}^{n_{j_1}-1} R^{((i,s),(i',s'),j_0)}_{(a,b),(a',b')}.
\end{multline}
We shall now study separately the terms in the right-hand side.
\item 
\paragraph[Step 2]{Step 2. Study of $R^{((i,s),(i',s'),j_0)}_{(a,b)}$}
Let us study first $R^{((i,s),(i',s'),j_0)}_{(a,b),(a',b')}$.
Cauchy-Schwarz's inequality yields to:
\begin{multline*}
{\frac{1}{\ell}\sum_{s=0}^{\ell-1}\sum_{s'=0}^{\ell-1}\lvert{R^{((i,s),(i',s'),j_0)}_{(a,b),(a',b')}}\rvert}\\
\leq (\mu_{\infty}+3)\,\sum_{a_1,a_2,a_3,a_4=1,\dots,p} \sup_{t\in\Z} \left(\sum_{s\in\Z} \mV^{(i,j_0)}_{a,a_1}(2^{j_1}s-t)^2\right)^{1/2} \sup_{t\in\Z} \left(\sum_{s\in\Z} \mV^{(i,j_0)}_{b,a_2}(2^{j_1}s-t)\right)^{1/2}\\{\sup_{s'\in\Z}\left( \sum_{t\in\Z} \mV^{(i',j_0)}_{a',a_3}(2^{j_1}s'-t)^2\right)^{1/2}\sup_{s'\in\Z}\left( \sum_{t\in\Z} \mV^{(i',j_0)}_{b',a_4}(2^{j_1}s'-t)^2\right)^{1/2}}.
\end{multline*}
The right-hand side does not depend on $\ell$. Results \eqref{eqnthetai} and \eqref{eqnthetaii} in Lemma \ref{lem:theta} imply that it converges to 0 when $j_0$ goes to infinity.  
Hence, \begin{equation}
\label{eqn:sumR}
\frac{1}{n_{j_1}}\sum_{s=0}^{n_{j_1}-1}\sum_{s'=0}^{n_{j_1}-1}\abs{R^{((i,s),(i',s'),j_0)}_{(a,b),(a',b')}}\tend_{j_0\to\infty} 0.
\end{equation}
\item 
\paragraph[Step 3]{Step 3. Study of $T^{((i,s),(i',s'),j_0)}_{(a,b)}$}
First observe that
\begin{align*}
\lefteqn{\sum_{t_1\in\Z}\bmV(2^{j_1}s-t_1)\bmV^{(i,j_0)}(2^{j_1}s'-t_1)^T}\\
& = \int_{-\pi}^{\pi} 2^{-j_1}{\bmV^{(i,j_0)\ast}(\lambda)}\overline{\bmV^{(i',j_0)\ast}(\lambda)}^T \rme^{\rmi 2^{j_1}(s-s')\lambda}\,\rmd\lambda\\
& = \frac{1}{\sqrt{2\pi}} \int_{-\pi}^{\pi} (2\pi)^{1/2}2^{-j_1}\sum_{q=0}^{2^{j_1}-1}{\bmV^{(i,j_0)\ast}(2^{-j_1}(\lambda+2\pi q))}\overline{\bmV^{(i',j_0)\ast}(2^{-j_1}(\lambda+2\pi q))}^T \rme^{\rmi 2^{j_1}(s-s')\lambda}\rmd\lambda.
\end{align*}
Last equality was obtained by \cite[Lemma 4]{Roueff09central}, since the function $\lambda\to {\bmV^{(i,j_0)\ast}(2^{-j_1}(\lambda+2\pi\,q))}\overline{\bmV^{(i',j_0)\ast}(2^{-j_1}(\lambda+2\pi\,q))}^T\rme^{\rmi 2^{j_1}(s-s')\lambda}$ is $2\pi$-periodic.

For all functions $g_1, g_2$ in $L^2(-\pi,\pi)$, $Q\in\N$, set
\[
L_Q(g_1,g_2)=\sum_{q\in\Z}\bigl(1-\frac{\lvert q\rvert}{Q}\bigr)_+ \left(\int_{-\pi}^\pi g_1(\lambda)\rmd\lambda\right)\left(\int_{-\pi}^\pi g_1(\lambda)\rmd\lambda\right).
\]
For all $\ell\in\N$,
\[
\frac{1}{\ell}\sum_{s=0}^{\ell-1}\sum_{s'=0}^{\ell-1}T^{((i,s),(i',s'),j_0)}_{(a,b),(a',b')}
= L_\ell(g_{a,a'}^{(j_0)},g_{b,b'}^{(j_0)}),
\]
where
\[
g_{a,a'}^{(j_0)}(\lambda)=(2\pi)^{1/2}2^{-j_1}\sum_{q=0}^{2^{j_1}-1}\bigl({\mV^{(i,j_0)\ast}(2^{-j_1}(\lambda+2\pi\,q))}\overline{\mV^{(i',j_0)\ast}(2^{-j_1}(\lambda+2\pi\,q))}^T\bigr)_{a,a'},
\]
and a similar definition of $g_{b,b'}^{(j_0)}(\lambda)$.
We omit the dependence on $i,i'$ temporally to simplify notations.

Introduce 
\[
g_{a,a'}^{(\infty)}(\lambda)=(2\pi)^{1/2} \tilde \mW_{a,a'}(\lambda)= (2\pi)^{1/2}\sum_{q\in\Z}\mW_{a,a'}(\lambda+2\pi\,q).
\]
We have 
\begin{align*}
\lefteqn{\abs{L_\ell(g_{a,a'}^{(j_0)},g_{b,b'}^{(j_0)})-L_\ell(g_{a,a'}^{(\infty)},g_{b,b'}^{(\infty)})}}\\
&= \abs{L_\ell(g_{a,a'}^{(j_0)}-g_{a,a'}^{(\infty)},g_{b,b'}^{(j_0)}-g_{b,b'}^{(\infty)}) + L_\ell(g_{a,a'}^{(\infty)},g_{b,b'}^{(j_0)}-g_{b,b'}^{(\infty)})+ L_\ell(g_{a,a'}^{(j_0)}-g_{a,a'}^{(\infty)},g_{b,b'}^{(\infty)})}\\
&\leq M_\ell(g_{a,a'}^{(j_0)}-g_{a,a'}^{(\infty)})M_\ell(g_{b,b'}^{(j_0)}-g_{b,b'}^{(\infty)}) + M_\ell(g_{a,a'}^{(\infty)})M_\ell(g_{b,b'}^{(j_0)}-g_{b,b'}^{(\infty)})\\
&\qquad{+ M_\ell(g_{a,a'}^{(j_0)}-g_{a,a'}^{(\infty)})M_\ell(g_{b,b'}^{(\infty)}),}
\end{align*}
with $M_Q(g_1)$ defined in Lemma~\ref{lem:diff_sum}. Last inequality results from Cauchy-Schwarz's inequality, which entails that $L_Q(g_1,g_2)\leq M_Q(g_1)M_Q(g_2)$.  

Applying Lemma~\ref{lem:diff_sum}, 
\begin{align}
\label{eqn:Ml1}M_\ell(g_{a,a'}^{(\infty)})&\leq \left(\int_{-\pi}^\pi \abs{g_{a,a'}^{(\infty)}(\lambda)}^2\rmd\lambda\right)^{1/2},\\
\label{eqn:Ml2} M_\ell(g_{a,a'}^{(j_0)}-g_{a,a'}^{(\infty)})&\leq \left(\int_{-\pi}^\pi \abs{g_{a,a'}^{(j_0)}(\lambda)-g_{a,a'}^{(\infty)}(\lambda)}^2\rmd\lambda\right)^{1/2}.
\end{align}
The two bounds in \eqref{eqn:Ml1} and \eqref{eqn:Ml2} do not depend on $\ell$.
The right-hand side of \eqref{eqn:Ml1} is finite since by \eqref{eqn1iii} and Lemma~\ref{lem:v1}, $\abs{g_{a,a'}(\lambda)}=(2\pi)^{1/2}\abs{\tilde \mW_{a,a'}(\lambda)}\leq (2\pi)^{1/2}\,C_v (1+\lvert\lambda\rvert)^{-2\delta_v}$, with $\delta_v>1/2$.
Next, notice that
\begin{multline*}
\int_{-\pi}^\pi \abs{g_{a,a'}^{(j_0)}(\lambda)-g_{a,a'}^{(\infty)}(\lambda)}^2\rmd\lambda\\ 
=2\pi\int_{-\infty}^\infty \abs{2^{-j_1}\bigl(\bmV^{(i,j_0)\ast}(2^{-j_1}\lambda)\overline{\bmV^{(i,j_0)\ast}(2^{-j_1}\lambda)}^T\bigr)_{a,a'}-\mW_{a,a'}^{(i,i')\ast}(\lambda)}^2\rmd\lambda.
\end{multline*}
Convergence \eqref{eqn1iii} and inequality \eqref{eqn:Ml2} ensure that the integral goes to 0 when $j_0$ goes to infinity by dominated convergence. 
Therefore, the right-hand side of \eqref{eqn:Ml2} goes to 0 when $j_0$ goes to infinity.
It results that $\abs{L_\ell(g_{a,a'}^{(j_0)},g_{b,b'}^{(j_0)})-L_\ell(g_{a,a'}^{(\infty)},g_{b,b'}^{(\infty)})}$ can be bounded by a quantity which is independent of $\ell$ and which goes to 0 when $j_0$ goes to infinity. Consequently, $\abs{L_{n_{j_1}}(g_{a,a'}^{(j_0)},g_{b,b'}^{(j_0)})-L_{n_{j_1}}(g_{a,a'}^{(\infty)},g_{b,b'}^{(\infty)})}\tend_{j_0\to \infty} 0$.

Observe that\begin{align*}
L_{n_{j_1}}(g_{a,a'}^{(j_0)},g_{b,b'}^{(j_0)})&= n_{j_1}^{-1}\sum_{s=0}^{n_{j_1}-1}\sum_{s'=0}^{n_{j_1}-1}T^{((i,s),(i',s'),j_0)}_{(a,a'),(b,b')}\\
L_{n_{j_1}}(g_{a,a'}^{(\infty)},g_{b,b'}^{(\infty)}) & = n_{j_1}^{-1}\sum_{s=0}^{n_{j_1}-1}\sum_{s=0}^{n_{j_1}-1}\Theta^{((i,s),(i',s'))}_{a,a'}\Theta^{((i,s),(i',s'))}_{b,b'}\\
 & \tend_{j_0\to\infty} 2\pi\int_{-\pi}^\pi \overline{\tilde{\mW}_{a,a'}^{(i,i')\ast}(\lambda)}\tilde{\mW}_{b,b'}^{(i,i')\ast}(\lambda)\rmd\lambda,
\end{align*}
where the last convergence is given by Lemma~\ref{lem:Sigma_m}. Hence,
\begin{equation}
\label{eqn:sumT1}
n_{j_1}^{-1}\sum_{s=0}^{n_{j_1}-1}\sum_{s'=0}^{n_{j_1}-1}T^{((i,s),(i',s'),j_0)}_{(a,a'),(b,b')} \tend_{j_0\to\infty} 2\pi\int_{-\pi}^\pi \overline{\tilde{\mW}_{a,a'}^{(i,i')\ast}(\lambda)}\tilde{\mW}_{b,b'}^{(i,i')\ast}(\lambda)\rmd\lambda.
\end{equation}
Similarly,
\begin{equation}
\label{eqn:sumT2}
n_{j_1}^{-1}\sum_{s=0}^{n_{j_1}-1}\sum_{s'=0}^{n_{j_1}-1}T^{((i,s),(i',s'),j_0)}_{(a,b'),(b,a')} \tend_{j_0\to\infty} 2\pi\int_{-\pi}^\pi \overline{\tilde{\mW}_{a,b'}^{(i,i')\ast}(\lambda)}\tilde{\mW}_{b,a'}^{(i,i')\ast}(\lambda)\rmd\lambda.
\end{equation}
\item 
\paragraph[Step 4]{Step 4. End of the proof}
Lemma~\ref{lem:cov} follows from \eqref{eqn:covS}, \eqref{eqn:sumR}, \eqref{eqn:sumT1} and \eqref{eqn:sumT2}.
\end{proof}

\subsection{Convergence of $\bGamma^{(m)}$ to $\bGamma$}

We proceed to show that $\bGamma^{(i,i')(m)}$ goes to $\bGamma^{(i,i')}$ when $m$ goes to infinity, for all $i,i'=1,\dots,N$. That is, the asymptotic variance of $\bmS^{(\cdot,j_0)(m)}$ when $j_0$ goes to infinity converges to the asymptotic variance of $\bmS^{(\cdot,j_0)}$.

\begin{lem}\label{lem:Gamma2}
For all $i,i'=1,\dots, N$,
\begin{equation}
\lim_{m\to\infty}\bGamma^{(i,i')(m)} = \bGamma^{(i,i')}.
\end{equation} 
\end{lem}
\begin{proof}
To study the limit of $\bGamma^{(i,i')(m)}_{a,b}$ when $m$ goes to infinity, we will first prove that $\bGamma^{(i,i')}$ satisfies $\bGamma^{(i,i')}=\lim_{j_0\to\infty}\lim_{m\to\infty} \hat\bGamma^{(i,i',j_0)(m)}$,  where $\bigl(\hat\Gamma_{(a,b),(a',b')}^{(i,i',j_0)(m)}\bigr)_{a,b,a',b'}$ are defined by
\begin{multline}
\label{eqn:Gamma_m_hat}\hat\Gamma_{(a,b),(a',b')}^{(i,i',j_0)(m)}=  2\pi\int_{-\pi}^\pi \overline{\tilde{\hat \mW}_{a,a'}^{(i,i',j_0)(m)\ast}(\lambda)}\tilde{\mW}_{b,b'}^{(i,i',j_0)(m)\ast}(\lambda)\rmd\lambda \\+ 2\pi\int_{-\pi}^\pi \overline{\tilde{\hat \mW}_{a,b'}^{(i,i',j_0)(m)\ast}(\lambda)}{\tilde{\hat \mW}_{b,a'}^{(i,i',j_0)(m)\ast}(\lambda)}\rmd\lambda,
\end{multline}
with \begin{align*}
\tilde{\hat \bmW}^{(i,i',j_0)(m)\ast}(\lambda)&=\sum_{t\in\Z}{\hat \bmW}^{(i,i',j_0)(m)\ast}(\lambda+2t\pi),\\
\hat \bmW^{(i,i',j_0)(m)\ast}(\lambda)&= 2^{-j_1}{\bmV^{(i,j_0)(m)\ast}(2^{-j_1}\lambda)}\overline{\bmV^{(i',j_0)(m)\ast}(2^{-j_1}\lambda)}^T.
\end{align*}
Notice that \[\int_{-\pi}^\pi \overline{\tilde{\hat \mW}_{a,a'}^{(i,i',j_0)(m)\ast}(\lambda)}\tilde{\mW}_{b,b'}^{(i,i',j_0)(m)\ast}(\lambda)\rmd\lambda=\int_{-\infty}^\infty \overline{{\hat \mW}_{a,a'}^{(i,i',j_0)(m)\ast}(\lambda)}{\mW}_{b,b'}^{(i,i',j_0)(m)\ast}(\lambda)\rmd\lambda.\] When $j_0$ goes to infinity, $\hat \bmW^{(i,i',j_0)(m)\ast}(\lambda)$ converges to $\bmW^{(i,i')(m)\ast}(\lambda)$. The convergence under the integral is obtained by dominated convergence thanks to Lemma~\ref{lem:v1}. It results that  
 $\lim_{j_0\to\infty} \hat \bGamma^{(i,i',j_0)(m)}=\bGamma^{(i,i')(m)}$.

Let us now study the convergence of $(\hat \bGamma^{(i,i',j_0)(m)})$ with respect to $m$. We introduce
\begin{align}
\hat \bmW^{(i,i',j_0)\ast}(\lambda)&= 2^{-j_1}{\bmV^{(i,j_0)\ast}(2^{-j_1}\lambda)}\overline{\bmV^{(i',j_0)\ast}(2^{-j_1}\lambda)}^T,\\
\tilde{\hat \bmW}^{(i,i',j_0)\ast}(\lambda)&=\sum_{t\in\Z}{\hat \bmW}^{(i,i',j_0)\ast}(\lambda+2t\pi),
\end{align}
and
\begin{equation}
\label{eqn:Gamma_hat}
\hat \Gamma_{(a,b),(a',b')}^{(i,i',j_0)} =  2\pi\int_{-\pi}^\pi \overline{\tilde{\hat \mW}_{a,a'}^{(i,i',j_0)\ast}(\lambda)}\tilde{\hat \mW}_{b,b'}^{(i,i',j_0)\ast}(\lambda)\rmd\lambda{ + 2\pi\int_{-\pi}^\pi \overline{\tilde{\hat \mW}_{a,b'}^{(i,i',j_0)\ast}(\lambda)}{\tilde{\hat \mW}_{b,a'}^{(i,i',j_0)\ast}(\lambda)}\rmd\lambda.}
\end{equation}

Since $\bmV^{(i,s,j_0)\ast}(\cdot)$ is continuous, we can apply a convergence under the integral. Hence, for all $\lambda\in\R$, 
\begin{align*}
\bmV^{(i,j_0)(m)\ast}(\lambda)&= \frac{1}{2\pi}\int_{-\pi}^\pi \hat H(u)\bmV^{(i,j_0)\ast}(\lambda-u/m)\,\rmd u\\ &\tend_{m\to\infty} \bmV^{(i,j_0)\ast}(\lambda)\,\frac{1}{2\pi}\int_{-\pi}^\pi \hat H(u)\rmd u = \bmV^{(i,j_0)\ast}(\lambda)\,.
\end{align*}
Additionally Lemma \ref{lem:vm1} entails that
\[
\forall j_0\geq 0,\;  \sup_{m\geq 1}\sup_{\abs{\lambda}<\pi} 2^{-j/2}\norm{\bmV^{(i,s,j_0)(m)\ast}(\lambda)}_\infty(1+2^j\abs{\lambda})^{\delta_v}  < \infty\,.
\]
Consequently, $\hat \bGamma^{(i,i',j_0)(m)}$ converges uniformly in $m$ to $\hat \bGamma^{(i,i',j_0)}$.
Moreover, $\hat\bGamma^{(i,i',j_0)}$ converges to $\bGamma^{(i,i')}$ when $j_0$ goes to infinity, the convergence under the integral being obtained by continuity.

It results that \[
\lim_{m\to\infty}\lim_{j_0\to\infty}\hat \bGamma^{(i,i',j_0)(m)}=\lim_{j_0\to\infty}\lim_{m\to\infty}\hat \bGamma^{(i,i',j_0)(m)}=\bGamma^{(i,i')}.\]
 Convergence of Lemma \ref{lem:Gamma2} follows.
\end{proof}

\subsection{Sums of $\bGamma^{(i,i')}$}

Due to decomposition \eqref{eqn:defSR}, we will have to manipulate sums of covariances of $\{\bmS^{(i,j_0)},\,i=1,\dots,N\}$. By Lemma~\ref{lem:cov}, the covariances are equal to $\{\bGamma^{(i,i')},\,i,i'=1,\dots,N\}$. The objective of this section is to give some results on these sums. Rather than using expressions \eqref{eqn:Gamma}, we use convergence \eqref{eqn:cvgceSigma}. We first need the following Lemma on quantities $\{{\Theta_{a,b}^{((i,s),(i',s'))}},\,i,i'=1,\dots,N,\, s,s'\geq 0\}$.

\begin{lem}
\label{lem:Sigma}
Suppose conditions of Theorem \ref{thm:gauss} hold.
Let $(a,b)\in\{1,\dots,p\}^2$, $(i,i')\in\{1,\dots,N\}^2$, $(s,s')\in\N^2$. Introduce 
\[
{\Xi_{a,b}^{((u,s),(u',s'))}}=2^{(\Delta-u)(1-d_a-d_b)}\int_{-\infty}^\infty g_{u'-u}(\lambda; d_a+d_b)\,\rme^{-\rmi\,(k-2^{u'-u}k')\lambda}\rmd\lambda,
\]
with $g_{u'-u}(\xi;\delta)=\overline{\hat\psi(\lambda)}\,{\hat\psi(2^{u'-u}\lambda)}\lvert\lambda\rvert^{-\delta}$, and $k=i+2^{\Delta-u}(s-1)$, $k'=i'+2^{\Delta-u'}(s'-1)$.
Then, under assumptions of Theorem~\ref{thm:gauss1},
\[{\Theta_{a,b}^{((i,s),(i',s'))}}
 = \Omega_{a,b}\, \cos\bigl({\pi(d_a-d_b)/2}\bigr)\,2^{(u-\Delta)/2+(u'-\Delta)/2+\Delta\,(d_a+d_b)} \Xi_{a,b}^{((u,s),(u',s'))}.
\]
\end{lem}
\begin{proof}
Recall that $\{\bTheta_{a,b}^{((i,s),(i',s'))},\,a,b=1,\dots,p,\,i=1,\dots,N,\,s=0,\dots,n_{j_1}-1\}$ are defined in \eqref{eqn:Sigma}. For all $(a,b)\in\{1,\dots,p\}^2$, \[\Theta_{a,b}^{((i,s),(i',s'))}=\overline{\Theta_{b,a}^{((i',s'),(i,s))}} = \frac{1}{2}\bigl(\Theta_{a,b}^{((i,s),(i',s'))}+\overline{\Theta_{b,a}^{((i,s),(i',s'))}}\bigr).\]
Hence,
\[\bTheta^{((i,s),(i',s'))}=\int_{-\infty}^\infty \frac{1}{2}\bigl(\bmW^{(i,i')\ast}(\lambda)+\overline{\bmW^{(i',i)\ast}(\lambda)}^T\bigr)\rme^{-\rmi(s-s')\lambda}\rmd\lambda.
\]
Replacing $\bmW^{(i,i')\ast}(\lambda)$ and $\bmW^{(i',i)\ast}(\lambda)$ by their expression \eqref{eqn:wdef}, we get
\[{\Theta_{a,b}^{((i,s),(i',s'))}}
 = \Omega_{a,b}\, \cos\bigl({\pi(d_a-d_b)/2}\bigr)\,2^{(u-\Delta)/2+(u'-\Delta)/2+\Delta\,(d_a+d_b)} \Xi_{a,b}^{((u,s),(u',s'))}
\]
with 
\begin{align*}
\lefteqn{\Xi_{a,b}^{((u,s),(u',s'))}}\\*
&=\int_{-\infty}^\infty\overline{\hat\psi(2^{u-\Delta}\lambda)}\,{\hat\psi(2^{u'-\Delta}\lambda)}\lvert\lambda\rvert^{-d_a-d_b}\,\rme^{-\rmi(\,2^{u-\Delta}(i-1)-\,2^{u'-\Delta}(i'-1))\lambda}\,\rme^{-\rmi(s-s')\lambda}\rmd\lambda,\\
&=2^{(\Delta-u)(1-d_a-d_b)}\int_{-\infty}^\infty\overline{\hat\psi(\lambda)}\,{\hat\psi(2^{u'-u}\lambda)}\lvert\lambda\rvert^{-d_a-d_b}\,\rme^{-\rmi\,(i+2^{\Delta-u}s)\lambda+\rmi\,(2^{\Delta-u'}s'+i')2^{u'-u}\lambda}\,\rmd\lambda.
\end{align*}
\end{proof}

We are now in a position to give a useful result on $\bGamma$. Namely, we consider the sum of $\{\bGamma^{(i,i')},\, i=2^{\Delta-u},\dots 2^{\Delta-u+1}-1,\,i'=2^{\Delta-u'},\dots, 2^{\Delta-u'+1}-1\}$, which corresponds to the contribution of the scales $(j,j')=(j_0+u,j_0+u')$ to the asymptotic variance $\bV$ of the sample wavelet covariance.

\begin{lem}
\label{lem:Gamma_final}
Suppose conditions of Theorem \ref{thm:gauss} hold. For all $(a,b,a',b')\in\{1,\dots,p\}^4$, for all $\Delta\in\N$, $(u,u')\in\{0,\dots,\Delta\}^2$,
\[
\sum_{i=2^{\Delta-u}}^{2^{\Delta-u+1}-1}\sum_{i'=2^{\Delta-u'}}^{2^{\Delta-u'+1}-1}\Gamma_{(a,b),(a',b')}^{(i,i')}=2^{\Delta-u}\,\bigl(\GIG_{(a,a'),(b,b')}(u)+\GIG_{(a,b'),(a',b)}(u)\bigr),
\]
where $\GIG(u)$ is defined in \eqref{eqn:Itilde_mat}.
\end{lem}
\begin{proof}
Quantities $\Gamma^{(i,i')}_{(a,b),(a',b')}$ can be expressed as:
\begin{multline}
\label{eqn:V2}
\Gamma^{(i,i')}_{(a,b),(a',b')}=\lim_{\ell\to\infty} \ell^{-1}\sum_{s=0}^\ell \sum_{s'=0}^\ell \Bigl(\Theta_{a,a'}^{((i,s),(i',s'))}\Theta_{b,b'}^{((i,s),(i',s'))}+\Theta_{a,b'}^{((i,s),(i',s'))}\Theta_{b,a'}^{((i,s),(i',s'))}\Bigr),
\end{multline}
where $\{\bTheta_{a,b}^{((i,s),(i',s'))},\,a,b=1,\dots,p,\,i=1,\dots,N,\,s=0,\dots,n_{j_1}-1\}$ are noted in \eqref{eqn:Sigma}. 
Lemma~\ref{lem:Sigma} yields
\begin{equation}
\label{eqn:V3}{\Theta_{a,b}^{((i,s),(i',s'))}}
 = \Omega_{a,b}\, \cos\bigl({\pi(d_a-d_b)/2}\bigr)\,2^{(u-\Delta)/2+(u'-\Delta)/2+\Delta\,(d_a+d_b)} \Xi_{a,b}^{((u,s),(u',s'))},
\end{equation}
with 
\[
{\Xi_{a,b}^{((u,s),(u',s'))}}=2^{(\Delta-u)(1-d_a-d_b)}\int_{-\infty}^\infty g_{u'-u}(\lambda; d_a+d_b)\,\rme^{-\rmi\,(k-2^{u'-u}k')\lambda}\rmd\lambda,
\]
and $g_{u'-u}(\xi;\delta)=\overline{\hat\psi(\lambda)}\,{\hat\psi(2^{u'-u}\lambda)}\lvert\lambda\rvert^{-\delta}$, $k=i+2^{\Delta-u}(s-1)$, $k'=i'+2^{\Delta-u'}(s'-1)$.

To get all values in $\Z$ from $k-2^{u'-u}k'$, we introduce $\tau\in\{0,\dots, 2^{-(u'-u)}-1\}$. Then, when $i$, $i'$, $s$ and $s'$ vary respectively in $\{2^{\Delta-u},\dots{2^{\Delta-u+1}-1}\},$ $\{2^{\Delta-u'},\dots {2^{\Delta-u'+1}-1}\}$, $\{0,\dots \ell-1\}$ ad $\{0,\dots \ell-1\}$, quantity $q=k-2^{u'-u}k'+2^{u'-u}\tau$ takes all relative integers values in $\{-Q,\dots, Q\}$, with $Q=2^{\Delta-u}(\ell-1)$.

We have 
\begin{align*}
\lefteqn{\sum_{i=2^{\Delta-u}}^{2^{\Delta-u+1}-1}\sum_{i'=2^{\Delta-u'}}^{2^{\Delta-u'+1}-1}\ell^{-1}\sum_{s=0}^{\ell-1} \sum_{s'=0}^{\ell-1} \Xi_{a,b}^{((u,s),(u',s'))}\Xi_{a',b'}^{((u,s),(u',s'))}}\\
&= \frac{Q}{\ell}\!\sum_{-Q\leq q\leq Q}\! \bigl(1-\frac{q}{Q}\bigr)_+ 2^{(\Delta-u)(2-d_a-d_b-d_{a'}-d_{b'})}\Bigl(\int_{-\infty}^\infty \sum_{\tau=0}^{2^{-(u'-u)}-1}\!\!g(\lambda; d_a+d_b)\rme^{-\rmi\,(q-2^{u'-u}\tau)\lambda)}\rmd\lambda\Bigr)\\
&\pushright{\Bigl(\int_{-\infty}^\infty \sum_{\tau=0}^{2^{-(u'-u)}-1} g(\lambda; d_{a'}+d_{b'})\,\rme^{-\rmi\,(q-2^{u'-u}\tau)\,\lambda}\rmd\lambda\Bigr)}\\
&= 2^{\Delta-u}\!\sum_{-Q\leq q\leq Q}\!\bigl(1-\frac{q}{Q}\bigr)_+ 2^{(\Delta-u)(2-d_a-d_b-d_{a'}-d_{b'})}\Bigl(\int_{-\pi}^\pi 2^{-(u'-u)/2} \tilde D_{u'-u,\infty}(\lambda; d_a+d_b)\rme^{\rmi\,q\lambda}\rmd\lambda\Bigr)\\
&\pushright{\Bigl(\int_{-\pi}^\pi 2^{-(u'-u)/2}\tilde  D_{u'-u,\infty}(\lambda; d_a+d_b)\,\rme^{\rmi\,q\lambda}\rmd\lambda\Bigr)}
\end{align*}
since $\tilde D_{u'-u;\infty}(\lambda; d_a+d_b)=\sum_{v=0}^{2^{u'-u}-1}2^{(u'-u)/2}\sum_{t\in\Z} g_{u'-u}(\lambda+2t\pi)\rme^{\rmi \,2^{u'-u}\tau(\lambda+2t\pi)}$.
Applying Lemma~\ref{lem:conv_sum}, we obtain 
\begin{align}
\lefteqn{\lim_{\ell\to\infty}\sum_{i=2^{\Delta-u}}^{2^{\Delta-u+1}-1}\sum_{i'=2^{\Delta-u'}}^{2^{\Delta-u'+1}-1}\ell^{-1}\sum_{s=0}^\ell \sum_{s'=0}^\ell \Xi_{a,b}^{((u,s),(u',s'))}\Xi_{a',b'}^{((u,s),(u',s'))}}\\
&= 2^{\Delta-u}\,2^{(\Delta-u)(2-d_a-d_b-d_{a'}-d_{b'})+u-u'}
(2\pi)\,\int_{-\pi}^\pi \overline{\tilde D_{u'-u;\infty}(\lambda; d_a+d_b)}\tilde D_{u'-u;\infty}(\lambda; d_{a'}+d_{b'})\,\rmd\lambda\\
\label{eqn:V4}&= 2^{\Delta-u'+(\Delta-u)(2-d_a-d_b-d_{a'}-d_{b'})}\,
\tilde{\mathcal{I}}_{u'-u}(d_a+d_b,d_{a'}+d_{b'})K(d_a+d_b)K(d_{a'}+d_{b'}).
\end{align}
Lemma~\ref{lem:Gamma_final} results from \eqref{eqn:V2}, \eqref{eqn:V3} and \eqref{eqn:V4}.
\end{proof}

\section{Proof of Theorem \ref{thm:gauss}}
\label{proof:thm:gauss}

The asymptotic normality is given by Theorem \ref{thm:gauss1} below. 
Proposition~\ref{prop:approx} enables to approximate $\{2^{-j(d_a+d_b)}\sigma_{a,b}(j),\, a,b=1,\dots,p,\, j\geq 0\}$  by $\bG$ and hence entails Theorem~\ref{thm:gauss}.

\begin{thm}
\label{thm:gauss1}
Suppose Assumptions \ref{ass:LRD}--\ref{ass:Aast} and \ref{ass:Wcompact} to \ref{ass:Wparameters} hold. Let $2^{-j_0\beta}\to 0$ and $N_X^{-1}2^{j_0}\to 0$.
Then 
\begin{multline}
\left\{\sqrt{n_{j_0+u}}\,\vect{\bLambda_{j_0}(\bd)^{-1}\bigl(\hat \bSigma(j_0+u)- \bSigma(j_0+u)\bigr)\bLambda_{j_0}(\bd)^{-1}}, ~ u=0,\dots,\Delta\right\}\\
\tend^{\mathcal L}_{j_0\to\infty} \{\bQ(u),\,u=0,\dots,\Delta\}
\end{multline}
where $\bQ(\cdot)$ is the centered Gaussian process defined in Theorem \ref{thm:gauss}.
\end{thm}

The construction of the proof is adapted from that of \cite[Theorem 2]{Roueff09asymptotic}. The proof has been structured as follows. \ref{proof:coeff} proposes a writing of wavelet coefficients as decimated linear processes, and provides an approximation by a $m$-dependent decimated linear processes, $m\geq 0$. Notations and technical results on the decimated decompositions are stated in \ref{proof:v}. They are useful for applying the propositions of \cite{Roueff09central}, which lead to the asymptotic normality described in Theorem~\ref{thm:gauss1}. The current section deals with this step.

Let us use the notations introduced in \ref{sec:indexing}. The sample wavelet covariances satisfy \eqref{eqn:defSR}. It results that the vector of empirical covariances at different scales can be written as
\begin{equation}
\label{eqn:matriciel}
2^{-j_0\,(d_a+d_b)}\begin{pmatrix}
\hat \sigma_{a,b}(j_0)\\ \vdots 
\\ \hat \sigma_{a,b}(j_1)\end{pmatrix} = \sqrt{n_{j_1}}\, \bB_{j_0} \, \begin{pmatrix}
\mS_{a,b}^{({1,j_0})}\\ \vdots \\ \mS_{a,b}^{({N,j_0})}\end{pmatrix} + \begin{pmatrix}
\mR_{a,b}({j_0})\\ \vdots \\ \mR_{a,b}({j_1})\end{pmatrix}
\end{equation}
with 
\[
\bB_{j_0}=\begin{pmatrix}
 0\dots & \dots & \dots~ 0 &\overbrace{n_{j_0}^{-1}\dots n_{j_0}^{-1}}^{2^\Delta\text{ times}} \\
0 \dots & \dots~ 0 & \overbrace{n_{j_0+1}^{-1}\dots n_{j_0+1}^{-1}}^{2^{\Delta-1}\text{ times}}& 0\dots0   \\
\vdots & &\vdots & \vdots\\
n_{j_1}^{-1}~0 \dots  & \dots & \dots & \dots 0
\end{pmatrix}.
\]
The objective is to show that $\vect{\hat\sigma_{a,b}(j),~{a,b=1,\dots,p}}$ is asymptotically Gaussian when $j$ goes to infinity. The proof is divided into the following steps:
\begin{itemize}[topsep=-10pt]
\item \ref{proof:thm:S} establishes that the vector $\vect{\mS_{a,b}({i,j_0}),~{a,b=1,\dots,p}}_{i=1,\dots,N}$ is asymptotically Gaussian when $j_0$ goes to infinity;  the proof is base on the approximation by $m$-dependent processes introduced in \ref{sec:mdep}.
 \item \ref{sec:R} proves that the terms $\vect{\mR_{a,b}(j),~{a,b=1,\dots,p}}_{j_0\leq j\leq j_1}$ are negligible.
 \end{itemize}
\ref{sec:proof:thm:gauss1:final} compiles all elements above to prove Theorem \ref{thm:gauss1}.

\subsection{Asymptotic normality of $\bmS^{({i,j})}$}
\label{proof:thm:S}

The asymptotic normality of $\vect{\mS^{({i,j_0})}}$ is given by the following proposition.

\begin{prop}\label{prop:Sij}
Under conditions of Theorem~\ref{thm:gauss1}, 
\begin{equation}
\left\{\vect{\bmS^{(i,j_0)}}, ~ i=1,\dots,N\right\}\tend^{\mathcal L}_{j_0\to\infty} \{\bQ^S(i),\,i=1,\dots,N\},
\end{equation}
where $\bQ^S(\cdot)$ is a centered Gaussian process with covariance function $\cov{(Q^S_{a,b}(i),\,Q^S_{a',b'}(i'))}=
\Gamma_{(a,b),(a',b')}^{(i,i')}$ defined in~\eqref{eqn:Gamma}. 
\end{prop}

Recall that $\bmS^{(i,j_0)}=\frac{1}{n_{j_1}}\sum_{s=0}^{n_{j_1}-1}\bmY^{(i,s,j_0)}$. The steps of the proof of the asymptotic normality of $\vect{\mS^{({i,j_0})}}$ are the following:
\begin{itemize}[topsep=-10pt]
\item We approximate $\{\mY_{a,b}^{(i,\cdot,j)},\,i=1,\dots N\}$ by the $m$-dependent process $\{\mY_{a,b}^{(i,\cdot,j)(m)},\,i=1,\dots N\}$ defined in \ref{sec:mdep}.
 We establish that $\{\mY^{({i,\cdot,j})(m)}_{a,b},\,i=1,\dots N\}$ is asymptotically normal when $j$ goes to infinity, using  \cite[Proposition~2]{Roueff09central}.
\item We obtain the asymptotic normality of $\sum_{a,b=1,\dots,p} u_{a,b} \mS_{a,b}^{({i,j_0})(m)}=n_{j}^{-1} \sum_{s=0}^{n_{j}-1} \mY^{({i,s,j})(m)}_{a,b}$, thanks to  \cite[Proposition~3]{Roueff09central}.
\item The asymptotic normality for $\sum_{a,b=1,\dots,p} \nu_{a,b} \mS_{a,b}^{({i,j_0})}$ is obtained by letting $m$ goes to infinity, using \cite[Theorem 3.2]{Billingsley}. 
\end{itemize}

\subsubsection{First step, approximation by a $m$-dependent process}

\label{sec:step2}

We study variables $(\bmY^{(i,s,j_0)(m)})_{i=1,\dots,N}$ defined in \eqref{eqn:Ym}, in \ref{sec:mdep}.
The objective of this step is to prove that variables $(\bmY^{(i,s,j_0)(m)})_{i=1,\dots,N}$ are asymptotically Gaussian. They are defined from variables $(\mZ_{a}^{(i,s,j_0)(m)},\, a=1,\dots,p)_{i=1,\dots,N}$ by $\bmY^{(i,s,j_0)(m)}=\bmZ^{(i,s,j_0)(m)}\bmZ^{(i,s,j_0)(m)T}$. We will study first the behavior of  $(\bmZ^{(i,s,j_0)(m)})_{i=1,\dots,N}$ and next deduce that variables $(\bmY^{(i,s,j_0)(m)})_{i=1,\dots,N}$ are asymptotically Gaussian.

For all $a=1,\dots, p$ and $s\in\N$, let
\[
\Z_{a}^{(s,j_0)(m)}=\left(
\mZ_{a}^{(1,s,j_0)(m)}, ~~ {\mZ_{a}^{(2,s,j_0)(m)}},   \dots, ~~ \mZ_{a}^{(N,s,j_0)(m)}\right)^T.
\]
$(\mZ_{a}^{(i,s,j_0)(m)},\, a=1,\dots,p)_{i=1,\dots,N}$ are $m$-dependent decimated linear processes in $\R^N$.
By \cite[Proposition~2]{Roueff09central}, we get that $
 \vect{\Z_{a}^{(s,j_0)(m)}, a=1,\dots,p}$
converges in distribution to $ \vect{\Z_{a}^{(s,\infty)(m)}, a=1,\dots,p}
$, which follows a centered Gaussian distribution with $\Z_{a}^{(s,\infty)(m)}=(\mZ_{a}^{(i,s,\infty)(m)})_{i=1,\dots, N}$ and \[
\cov\bigl(\mZ_{a}^{(i,s,\infty)(m)},\mZ_{b}^{(i',s',\infty)(m)}\bigr)=\Theta_{a,b}^{((i,s),(i',s'))(m)}.
\]
For $s\in\N$, $\mathbf{\nu}\in\R^{p\times p}$, write \begin{align*}
\Y_{a,b}^{(s,j_0)(m)}&=\left(\mY_{a,b}^{(1,s,j_0)(m)}, \dots,\, \mY_{a,b}^{(N,s,j_0)(m)}\right)^T\\
\bY^{(s,j_0)(m)}(\bnu)& =\sum_{a,b=1,\dots,p} \nu_{a,b}\Y_{a,b}^{(s,j_0)(m)}=\left(Y^{(1,s,j_0)(m)}(\bnu), \dots,\, Y^{(N,s,j_0)(m)}(\bnu)\right)^T
\end{align*}
 The continuous mapping theorem implies that, when $j_0$ goes to infinity,  $(\bY^{({s,j_0})(m)}(\bnu))_{s=0,\dots,n_{j_1}-1}$ converges in distribution to $ \left(\bY^{(s,\infty)(m)}(\bnu)\right)_{s=0,\dots,n_{j_1}-1}$ given by \[
 \bY^{(s,\infty)(m)}(\bnu)=\sum_{a,b=1,\dots,p} \nu_{a,b}\Y_{a,b}^{(s,\infty)(m)}=\left(Y^{(1,s,\infty)(m)}(\bnu), \dots,\, Y^{(N,s,\infty)(m)}(\bnu)\right)^T,
 \] with $\Y_{a,b}^{(s,\infty)(m)}=(\mY_{a,b}^{(i,s,\infty)(m)})_{i=1,\dots, N}$ and $\mY_{a,b}^{(i,s,\infty)(m)}=\mZ_{a}^{(i,s,\infty)(m)}\mZ_{b}^{(i,s,\infty)(m)}$.

\subsubsection{Second step: asymptotic normality of $\bmS^{(i,s,j_0)(m)}$}

We first prove that conditions of \cite[Proposition 3]{Roueff09central} are satisfied by $\{\bY^{(s,j_0)(m)},\,s\in\N,\, j_0\geq 0\}$.

\begin{lem}\label{lem:Ym}
For all $m\geq 1$, for all $\bnu\in\R^{p\times p}$,
\begin{gather}
\label{eqn3i}\sup_{i=1,\dots,N}\sup_{s\geq 0}\sup_{j_0\geq 0}\E[Y^{({i,s,j_0})(m)}(\bnu)]  <\infty,\\
\forall s,s'\geq 0,\; \lim_{j_0\to\infty}\cov(\bY^{({s,j_0})(m)}(\bnu),\bY^{({s',j_0})(m)}(\bnu))  = \cov( \bY^{(s,\infty)(m)}(\bnu), \bY^{(s',\infty)(m)}(\bnu)),\\
\label{eqn3ii}\\
\label{eqn3iii}\lim_{\ell\to\infty}\lim_{j_0\to\infty}\cov(\ell^{-1/2}\sum_{s=0}^{\ell-1}\bY^{({s,j_0})(m)}(\bnu))  = \bGamma^{(m)}(\bnu),
\end{gather}
with $\bGamma^{(m)}(\bnu)=\sum_{a,b=1,\dots,p} \sum_{a',b'=1,\dots,p} \nu_{a,b}\nu_{a',b'}\bigl(\Gamma^{(i,i')(m)}_{(a,b),(a',b')}\bigr)_{i,i'=1,\dots,N}$ and $\bGamma^{(m)}$ defined in \eqref{eqn:Gamma_m}.
\end{lem}
\begin{proof} \item 
\paragraph[Proof of dominance]{Proof of \eqref{eqn3i}}
Assertion \eqref{eqn3i} follows from the fact that $\E[\Y^{({s,\infty})(m)}(\bnu)]=\sum_{a,b=1,\dots,p} \nu_{a,b}\bigl(\Theta^{((i,s),(i,s))(m)}_{a,b}\bigr)_{i=1,\dots,N}$.
\item 
\paragraph[Proof of convergence of covariance]{Proof of \eqref{eqn3ii}}
Vector $\begin{pmatrix}
\Z_{a}^{(s,\infty)(m)},&  \Z_{b}^{(s,\infty)(m)}, & \tilde \Z_{a'}^{(s',\infty)(m)},&  \Z_{b'}^{(s',\infty)(m)}
\end{pmatrix}^T$ follows a centered Gaussian distribution. We can therefore use Isserlis's theorem. We get 
\begin{align*}
\lefteqn{\E(Y^{({i,s,\infty})(m)}(\bnu) Y^{({i,s',\infty})(m)}(\bnu))}\\
&=\sum_{a,b,a',b'=1,\dots,p}\nu_{a,b}\nu_{a',b'}{\E(\mZ_{a}^{(i,s,\infty)(m)}\mZ_{b}^{(i,s,\infty)(m)}\mZ_{a'}^{(i,s',\infty)(m)}\mZ_{b'}^{(i,s',\infty)(m)})}\\
&=\sum_{a,b,a',b'=1,\dots,p}\nu_{a,b}\nu_{a',b'}\left[\Theta^{((i,s),(i,s))(m)}_{a,b}\Theta^{((i',s'),(i',s'))(m)}_{a',b'}\right.\\
&\qquad\qquad\qquad\qquad{\left.+\Theta^{((i,s),(i',s'))(m)}_{a,a'}\Theta^{((i,s),(i',s'))(m)}_{b,b'}+\Theta^{((i,s),(i',s'))(m)}_{a,b'}\Theta^{((i,s),(i',s'))(m)}_{b,a'}\right]\,.}
\end{align*}
It results that:
\begin{multline}
{\cov(Y^{({i,s,\infty})(m)}(\bnu),Y^{({i',s',\infty})(m)}(\bnu))}\\
=\sum_{a,b,a',b'=1,\dots,p}\nu_{a,b}\nu_{a',b'}\left[\Theta^{((i,s),(i',s'))(m)}_{a,a'}\Theta^{((i,s),(i',s'))(m)}_{b,b'}+\Theta^{((i,s),(i',s'))(m)}_{a,b'}\Theta^{((i,s),(i',s'))(m)}_{b,a'}\right].
\end{multline}
We deduce that it is sufficient to prove that, when $j_0$ goes to infinity,
$
\cov(Y^{({i,s,j_0})(m)}(\bnu),Y^{({i',s',j_0})(m)}(\bnu))$ converges to \begin{multline*}\sum_{a,b,a',b'=1,\dots,p}\nu_{a,b}\nu_{a',b'}\left[\Theta^{((i,s),(i',s'))(m)}_{a,a'}\Theta^{((i,s),(i',s'))(m)}_{b,b'}+\Theta^{((i,s),(i',s'))(m)}_{a,b'}\Theta^{((i,s),(i',s'))(m)}_{b,a'}\right]
\end{multline*} 
to obtain equality \eqref{eqn3ii}.

Following the proof of Lemma~\ref{lem:cov}, we can write $\cov(Y^{({i,s,j_0})(m)}(\bnu),Y^{({i',s',j_0})(m)}(\bnu))$ as
\begin{multline} 
\label{eqn:covm}
\cov(Y^{({i,s,j_0})(m)}(\bnu),Y^{({i',s',j_0})(m)}(\bnu))\\ =\sum_{a,b,a',b'=1,\dots,p}\nu_{a,b}\nu_{a',b'}
\Bigl[T^{(i,i',s,s',j_0)(m)}_{(a,a'),(b,b')} +T^{(i,i',s,s',j_0)(m)}_{(a,b'),(b,a')}
- \sum_{s'=0}^{n_{j_1}-1} \nu_{a,b}\nu_{a',b'}R^{(i,i',s,s',j_0)(m)}_{(a,b),(a',b')}\Bigr],
\end{multline}
with \begin{multline*}
{T^{(i,i',s,s',j_0)(m)}_{(a,a'),(b,b')}(\bnu)}
 =\left(\sum_{t_1\in\Z} \bigl(\bmV^{(i,j_0)(m)}(2^{j_1}s-t_1)\bmV^{(i',j_0)(m)}(2^{j_1}s'-t_1)^T\bigr)_{a,a'}\right)\\ {\left(\sum_{t_2\in\Z}\bigl(\bmV^{(i,j_0)(m)}(2^{j_1}s-t_2)\bmV^{(i',j_0)(m)}(2^{j_1}s'-t_2)^T\bigr)_{b,b'}\right),}
 \end{multline*}
and
$
\lim_{j_0\to\infty} R^{(i,i',s,s',j_0)(m)}_{(a,b),(a',b')}=0.
$
 Conclusion follows from \eqref{eqn2iii}.
\item \paragraph[Proof of convergence of the covariance of the mean]{Proof of \eqref{eqn3iii}}
The proof is based on decomposition \eqref{eqn:covm} of $\cov(Y^{({i,s,j_0})(m)},Y^{({i',s',j_0})(m)})$. Following the step 2 and the step 3 of the proof of Lemma~\ref{lem:cov}, we can establish that
\begin{align*}
n_{j_1}^{-1}\sum_{s=0}^{n_{j_1}-1}\sum_{s'=0}^{n_{j_1}-1}R^{(i,i',s,s',j_0)(m)}_{(a,a'),(b,b')} &\tend_{j_0\to\infty}0,\\
n_{j_1}^{-1}\sum_{s=0}^{n_{j_1}-1}\sum_{s'=0}^{n_{j_1}-1}T^{(i,i',s,s',j_0)(m)}_{(a,b),(a',b')} &\tend_{j_0\to\infty} 2\pi\int_{-\pi}^\pi \overline{\tilde{\mW}_{a,a'}^{(i,i')(m)\ast}(\lambda)}\tilde{\mW}_{b,b'}^{(i,i')(m)\ast}(\lambda)\rmd\lambda,\\
n_{j_1}^{-1}\sum_{s=0}^{n_{j_1}-1}\sum_{s'=0}^{n_{j_1}-1}T^{(i,i',s,s',j_0)(m)}_{(a,b'),(b,a')} &\tend_{j_0\to\infty} 2\pi\int_{-\pi}^\pi \overline{\tilde{\mW}_{a,b'}^{(i,i')(m)\ast}(\lambda)}\tilde{\mW}_{b,a'}^{(i,i')(m)\ast}(\lambda)\rmd\lambda.
\end{align*}
The proof is very similar and it is not detailed here for the sake of concision. It relies on Lemma~\ref{lem:diff_sum}, Lemma~\ref{lem:vm1}, Lemma~\ref{lem:vm} and Lemma~\ref{lem:Sigma_m}.
\end{proof}

We are now in a position to give the asymptotic normality of variables $\bmS^{(i,s,j_0)(m)}$, defined in \eqref{eqn:Sm}.

\begin{prop} \label{prop:Sm}
Under conditions of Theorem~\ref{thm:gauss1}, 
\begin{equation}
\left\{\vect{\bmS^{(i,j_0)(m)}}, ~ i=1,\dots,N\right\}\tend^{\mathcal L}_{j_0\to\infty} \{\bQ^{(m)}(i),\,i=1,\dots,N\},
\end{equation}
where $\bQ^{(m)}(\cdot)$ is a centered Gaussian process with covariance function $\cov{(Q^{(m)}_{a,b}(i),\,Q^{(m)}_{a',b'}(i'))}=
\Gamma^{(i,i')(m)}_{(a,b),(a',b')}$ defined in~\eqref{eqn:Gamma_m}.
\end{prop}
\begin{proof}
With results of Lemma~\ref{lem:Ym}, we can apply \cite[Proposition~3]{Roueff09central}, which gives the proposition. 
\end{proof}

\subsubsection{Third step: proof of Proposition \ref{prop:Sij}}

For $i=1,\dots, N$, when $j_0$ goes to infinity, $\vect{\bmS^{(i,j_0)(m)}}=\vect{n_j^{-1/2}\sum_{s=0}^{n_j-1}\bmY^{(i,s,j_0)(m)}}$ converges to a $\mathcal N_{p^2}(0,\bGamma^{(m)})$ distribution by Proposition \ref{prop:Sm}.  We want to deduce a similar result for variables $(\bmS^{(i,j_0)})$. 
Lemma \ref{lem:Gamma2} establishes that $\lim_{m\to\infty}\bGamma^{(m)}=\bGamma$. Hence Lemma \ref{lem:Gamma} below entails that we can apply \cite[Theorem 3.2]{Billingsley}. Proposition~\ref{prop:Sij} follows.

It remains to prove the following lemma.

\begin{lem}\label{lem:Gamma}
For all $i,i'=1,\dots, N$,
\begin{equation}
\label{eqn4ii}\lim_{m\to\infty}\lim_{j_0\to\infty} \var\left(\bmS^{({i,j_0})(m)} - \bmS^{({i,j_0})}\right)=0.
\end{equation} 
\end{lem}
\begin{proof}
Lemma \ref{lem:cov} and Proposition \ref{prop:Sm} state respectively that for all $(i,i')\in\{1,\dots,N\}^2$, for all $(a,b,a',b')\in\{1,\dots,p\}^4$, we have $\lim_{j_0\to\infty} \cov(\mS_{a,b}^{({i,j_0})},\mS_{a',b'}^{({i',j_0})})=\Gamma_{(a,b),(a',b')}^{(i,i')}$ and $\lim_{j_0\to\infty} \cov(\mS_{a,b}^{({i,j_0})(m)}\mS_{a',b'}^{({i',j_0})(m)})=\Gamma_{(a,b),(a',b')}^{(i,i')(m)}$. Additionnally, by
Lemma \ref{lem:Gamma2}, $\lim_{m\to\infty} \bGamma^{(i,i')(m)}=\bGamma^{(i,i')}$. Consequently,
\[
\lim_{m\to\infty}\lim_{j_0\to\infty}\cov(\mS_{a,b} ^{(i,j_0)(m)},\mS_{a',b'}^{(i,j_0)(m)})=\lim_{j_0\to\infty}\cov(\mS_{a,b}^{(i,j_0)},\mS_{a',b'}^{(i,j_0)})=\Gamma_{(a,b),(a',b')}^{(i,i')}.
\]

Hence it is sufficient to prove that $\lim_{m\to\infty}\lim_{j_0\to\infty}\cov(\mS_{a,b} ^{(i,j_0)(m)},\mS_{a',b'}^{(i',j_0)})= \Gamma_{(a,b),(a',b')}^{(i,i')}.$ To this aim,  we will prove that limits can be inverted, that is, $\lim_{m\to\infty}\lim_{j_0\to\infty}\cov(\mS_{a,b} ^{(i,j_0)(m)},\mS_{a',b'}^{(i',j_0)})=\lim_{j_0\to\infty}\lim_{m\to\infty}\cov(\mS_{a,b}^{(i,j_0)(m)},\mS_{a',b'}^{(i',j_0)})$.

We can establish that $\cov(\mS_{a,b}^{(i,j_0)(m)},\mS_{a',b'}^{(i',j_0)})$ converges as $j_0$ goes to infinity to \begin{equation}
\label{eqn:cov_Gamma}
2\pi\!\int_{-\pi}^\pi \overline{\tilde{\mW}_{a,a'}^{(i,i',j_0)(m,\infty)\ast}\!(\lambda)}\tilde{\mW}_{b,b'}^{(i,i',j_0)(m,\infty)\ast}\!(\lambda)\rmd\lambda + 2\pi\!\int_{-\pi}^\pi \overline{\tilde{\mW}_{a,b'}^{(i,i',j_0)(m,\infty)\ast}\!(\lambda)}{\tilde{\mW}_{b,a'}^{(i,i',j_0)(m,\infty)\ast}\!(\lambda)}\rmd\lambda,
\end{equation}
with \begin{align*}
\tilde{\mW}_{a,b'}^{(i,i',j_0)(m,\infty)\ast}(\lambda)&=\sum_{t\in\Z}{\mW}_{a,b'}^{(i,i',j_0)(m,\infty)\ast}(\lambda+2t\pi)\\ 
\text{and~~} {\mW}_{a,b'}^{(i,i',j_0)(m,\infty)\ast}(\lambda)&= 2^{-j_1}\sum_{a'=1}^p \mV_{a,a'}^{(i,j_0)(m)\ast}(2^{-j_1}\lambda)\mV_{b,a'}^{(i',j_0)\ast}(2^{-j_1}\lambda).
\end{align*}
 The proof is very similar to the one carried out for proving \eqref{eqn3iii} and it is thus omitted. Additionally \eqref{eqn:cov_Gamma} converges to $\Gamma_{(a,b),(a',b')}^{(i,i')}$ when $j_0$ goes to infinity.

Next we can study the limit of $\cov(\mS_{a,b}^{(i,j_0)(m)},\mS_{a',b'}^{(i',j_0)})$ as $m$ goes to infinity and state that it converges uniformly to $\hat \Gamma_{(a,b),(a',b')}^{(i,i',j_0)}$ defined in \eqref{eqn:Gamma_hat}. In the proof of Lemma~\ref{lem:Gamma2}, we have proved that $\lim_{j_0\to\infty}\hat \Gamma_{(a,b),(a',b')}^{(i,i',j_0)}=\hat \Gamma_{(a,b),(a',b')}^{(i,i',j_0)}$. Hence
\[
\lim_{m\to\infty}\lim_{j_0\to\infty} \cov(\mS_{a,b}^{(i,j_0)(m)},\mS_{a',b'}^{(i',j_0)}) = \lim_{j_0\to\infty} \lim_{m\to\infty} \cov(\mS_{a,b}^{(i,j_0)(m)},\mS_{a',b'}^{(i',j_0)})= \Gamma_{(a,b),(a',b')}^{(i,i')}.
\]
This concludes the proof.
\end{proof}

\subsection{Study of $\mR(j)$}
\label{sec:R}

The following lemma gives the convergence of $\{\mR_{a,b}(j),\, a,b=1,\dots,p,\, j\geq 0\}$ to zero when $j$ goes to infinity.
\begin{lem}
\label{prop:Rij}
$n_{j}^{1/2} \mR_{a,b}(j)=n_{j}^{-1/2} \sum_{i=2^{\Delta-u}}^{T_\psi(2^{\Delta-u}-1)} \mY_{a,b}^{(i,n_{j_1},j_0)}$ goes to zero in probability as $j_0$ goes to infinity.
\end{lem}
\begin{proof}
H\"older's inequality gives 
\[
\E[\lvert \mY_{a,b}^{(i,n_{j_1},j_0)}\rvert]\leq\sum_{a'=1}^p\left(\sum_{t\in\Z} \mV_{a,a'}^{(i,j_0)}(2^{j_1}n_{j_1}-t)^2\right)^{1/2}\left(\sum_{t\in\Z} \mV_{b,a'}^{(i,j_0)}(2^{j_1}n_{j_1}-t)^2\right)^{1/2}\,.
\]
Using Parseval's equality and Lemma \ref{lem:v1}, we get
\begin{align*}
\E[\lvert \mY_{a,b}^{(i,n_{j_1},j_0)}\rvert]& \leq \sum_{a'=1}^p\left(\int_{-\pi}^\pi  \abs{\mV_{a,a'}^{(i,j_0)\ast}(\lambda)}^2\rmd\,\lambda\right)^{1/2}\left(\int_{-\pi}^\pi  \abs{\mV_{b,a'}^{(i,j_0)\ast}(\lambda)}^2\rmd\,\lambda\right)^{1/2}\\ 
&\leq C_v\,\int_\R (1+\lvert\xi\rvert)^{-2\delta_v}\rmd\xi\,.
\end{align*}
Since $\delta_v>1/2$, $\E[\lvert \mY_{a,b}^{({i,n_{j_1},j_0})}\rvert]$ if finite. Thus $\mR_{a,b}(j) = \BigO_\P(n_{j})$ using Markov's inequality.
\end{proof}

\subsection{Proof of Theorem \ref{thm:gauss1}}

\label{sec:proof:thm:gauss1:final}

Equality \eqref{eqn:matriciel} states that for all $a,b=1,\dots,p$,
\begin{multline}
2^{-j_0(d_a+d_b)}\begin{pmatrix}
\sqrt{n_{j_0}}\hat \sigma_{a,b}({j_0})\\ \vdots \\ \sqrt{n_{j_0+\Delta}}\hat \sigma_{a,b}({j_0+\Delta})\end{pmatrix} \\
= \sqrt{n_{j_1}}\,\begin{pmatrix}
\sqrt{n_{j_0}}\\ \vdots \\ \sqrt{n_{j_0+\Delta}}\end{pmatrix}\, \bB_{j_0}\, \begin{pmatrix}
\mS_{a,b}^{({1,j_0})}\\ \vdots \\ \mS_{a,b}^{({N,j_0})}\end{pmatrix} + \begin{pmatrix}
\sqrt{n_{j_0}}\mR_{a,b}({j_0})\\ \vdots \\ \sqrt{n_{j_0+\Delta}}\mR_{a,b}({j_0+\Delta})\end{pmatrix}.
\end{multline}
By Proposition \ref{prop:Rij}, $n_{j}^{1/2}\mR_{a,b}(j)$ goes to 0 in probability when $j$ goes to infinity.

Proposition~\ref{prop:Sij} entails that the first term is asymptotically Gaussian and centered. We now explicit the asymptotic variance.

For all $0\leq u'\leq u \leq \Delta$,  $V_{(a,b),(a',b')}(u,u')=\lim_{j_0\to\infty}\cov\bigl(n_{j_0+u}^{1/2}2^{-j_0(d_a+d_b)}\hat \sigma_{a,b}(j_0+u),n_{j_0+u'}^{1/2}2^{-j_0(d_{a'}+d_{b'})}\hat \sigma_{a',b'}(j_0+u')\bigr)$ satisfies
\[ 
V_{(a,b),(a',b')}(u,u') =\lim_{j_0\to\infty}
n_{j_1} n_{j_0+u}^{1/2}n_{j_0+u'}^{1/2}\bigl(\bB_{j_0} \cov\bigl(\mS_{a,b}^{(i,j_0)},\mS_{a',b'}^{(i,j_0)}\bigr)_{i,i'=1,\dots,N}\bB_{j_0}^T\bigr)_{u,u'}.
\]
Replacing $\bB_{j_0}$ by its expression, the equation above can be reformulated as
\begin{equation}
V_{(a,b),(a',b')}(u,u')
= \lim_{j_0\to\infty} n_{j_1}n_{j_0+u}^{-1/2}n_{j_0+u'}^{-1/2}\sum_{i=2^{\Delta-u}}^{2^{\Delta-u+1}-1}\sum_{i'=2^{\Delta-u'}}^{2^{\Delta-u'+1}-1}\cov\bigl(\mS_{a,b}^{(i,j_0)},\mS_{a',b'}^{(i,j_0)}\bigr)_{i,i'=1,\dots,N}.
\end{equation}
Using the fact that $n_{j_1} n_{j_0+u}^{-1/2}n_{j_0+u'}^{-1/2}\sim 2^{-\Delta+u/2+u'/2}$, and using Proposition~\ref{prop:Sij}, it results that 
\begin{equation}
V_{(a,b),(a',b')}(u,u')
=2^{-\Delta+u/2+u'/2}\sum_{i=2^{\Delta-u}}^{2^{\Delta-u+1}-1}\sum_{i'=2^{\Delta-u'}}^{2^{\Delta-u'+1}-1}\Gamma_{i,i'}((a,b),(a',b')),
\end{equation}
with $\tilde\bGamma((a,b),(a',b'))=\bigl(\Gamma_{(a,b),(a',b')}(i,i')\bigr)_{i,i'=1,\dots, N}$ defined in \eqref{eqn:Gamma}.
We deduce from Lemma~\ref{lem:Gamma_final} that
\[
V_{(a,b),(a',b')}(u,u')=2^{-(u-u')/2}\,\bigl(\GIG_{(a,a'),(b,b')}(u)+\GIG_{(a,b'),(a',b)}(u)\bigr).
\]

\section{Proof of Corollary \ref{cor:cor}}
\label{proof:cor:gauss}
 
Let $(a,b)\in\{1,\dots,p\}^2$ and $j\geq 0$.
We can write the correlation $\hat\rho_{a,b}(j)$ as $\hat \rho_{a,b}(j)=g(2^{-j(d_{a}+d_{b})}\hat\sigma_{a,b}(j),2^{-j2d_{a}}\hat\sigma_{a,a}(j),2^{-j2d_{b}}\hat \sigma_{b,b}(j))$ with  $g(x_1,x_2,x_3)=x_1/\sqrt{x_2x_3}$. The vector $(\hat\sigma_{a,a}(j), \hat\sigma_{b,b}(j), \hat\sigma_{a,b}(j))$ is asymptotically Gaussian by Theorem \ref{thm:gauss1}.
By Delta method \citep[Theorem 4.2.3.]{Anderson}, 
 we deduce that 
$\sqrt{n_j}\,\left(\hat \rho_{a,b}(j)-r_{a,b}\right)$ converges to a centered Gaussian distribution when $j$ goes to infinity and that its asymptotic covariance satisfies:
\begin{align*}
\lefteqn{\lim_{j_0\to\infty}\cov(\hat\rho_{a,b}(j_0+u),\hat\rho_{a',b'}(j_0+u'))}\\ & = \frac{1}{\sqrt{G_{a,a}G_{b,b}}}\Bigl(\frac{1}{\sqrt{G_{a',a'}G_{b',b'}}} V_{(a,b),(a',b')}(u,u')-\frac{r_{a',b'}}{2G_{a',a'}} V_{(a,b),(a',a')}(u,u')\\
&\qquad\qquad\qquad\qquad\qquad\qquad\qquad\qquad\qquad\qquad\qquad\qquad{-\frac{r_{a',b'}}{2G_{b',b'}} V_{(a,b),(b',b')}(u,u') \Bigr)}\\*
& \quad- \frac{r_{a,b}}{2G_{a,a}}\Bigl(\frac{1}{\sqrt{G_{a',a'}G_{b',b'}}} V_{(a',a'),(a',b')}(u,u')-\frac{r_{a',b'}}{2G_{a',a'}} V_{(a,a),(a',a')}(u,u')\\
&\pushright{-\frac{r_{a',b'}}{2G_{b',b'}} V_{(a,a),(b',b')}(u,u') \Bigr)}\\
& \quad- \frac{r_{a,b}}{2G_{b,b}}\Bigl(\frac{1}{\sqrt{G_{a',a'}G_{b',b'}}} V_{(b,b),(a',b')}(u,u')-\frac{r_{a',b'}}{2G_{a',a'}} V_{(b,b),(a',a')}(u,u')\\
&\pushright{-\frac{r_{a',b'}}{2G_{b',b'}} V_{(b,b),(b',b')}(u,u') \Bigr).}
\end{align*}

We deduce first that 
\begin{align*}
\lefteqn{\lim_{j\to\infty}\var(\hat\rho_{a,b}(j))}\\ & = \frac{1}{G_{a,a}G_{b,b}}\left( V_{(a,b),(a,b)}(0,0)-\frac{G_{a,b}}{2G_{a,a}} V_{(a,b),(a,a)}(0,0)-\frac{G_{a,b}}{2G_{b,b}} V_{(a,b),(b,b)}(0,0) \right)\\
& \quad - \frac{G_{ab}}{2G_{a,a}^2G_{b,b}}\left( V_{(a,b),(a,a)}(0,0)-\frac{G_{a,b}}{2G_{a,a}} V_{(a,a),(a,a)}(0,0)-\frac{G_{a,b}}{2G_{b,b}} V_{(a,a),(b,b)}(0,0) \right)\\
& \quad- \frac{G_{ab}}{2G_{a,a}G_{b,b}^2}\left(V_{(a,b),(b,b)}(0,0)-\frac{G_{a,b}}{2G_{a,a}}V_{(a,a),(b,b)}(0,0)-\frac{G_{a,b}}{2G_{b,b}} V_{(b,b),(b,b)}(0,0)\right),
\end{align*}
where we have used that $\bV(u,u)=\bV(0,0)$ for all $u\in\Z$. Replacing also $V_{(a_1,a_2),(a_3,a_4)}(0,0)$ for all $a_1,a_2,a_3,a_4=1,\dots,p$ by its expression given in \eqref{eqn:variance}, we obtain the asymptotic distribution of Corrollary \ref{cor:cor}.

Second, suppose that the off-diagonal entries of $\bG$ are equal to zero. Then,
\[
{\lim_{j_0\to\infty}\cov(\hat\rho_{a,b}(j_0+u),\hat\rho_{a',b'}(j_0+u'))} = \frac{1}{\sqrt{G_{a,a}G_{b,b}G_{a',a'}G_{b',b'}}} V_{(a,b),(a',b')}(u,u').
\]
Replacing $V_{(a,b),(a',b')}(u,u')$ by its expression, it results that the right-hand side is equal to 0 for all $(a,b)\notin\{(a',b'),(b',a')\}$, and is equal to 
$2^{-|u-u'|/2}2^{-j(2d_a+2d_b)}\tilde I_{|u-u'|}(2d_a,2d_b)$ else.

\section{Additional results on the sample wavelet covariance}
\label{sec:cvgceS}

The objective of this section is to prove that some linear combinations of sample wavelet covariances may be asymptotically Gaussian. Some conditions are given in the following  proposition. It corresponds to \cite[Theorem 3]{Roueff09asymptotic}. The arguments and the scheme of proof are the same.  They are recalled here since the setting and the notations are slightly different.

\begin{prop} \label{prop:cvgceS}
Suppose assumptions of Theorem~\ref{thm:gauss} hold.
Let $\Delta\in\N\cup\{\infty\}$. Let $\{\bomega(u,j_0), u\in\N, j_0\in\N\}$ be a sequence of $\R^{p\times p}$ such that for all $u\in\N$,  $\bomega(u,j_0)\tend_{j_0\to\infty} \tilde \bomega(u)\in\R$ and 
\begin{equation}\label{eqn:wu}
\sum_{u=0}^\Delta \sup_{j_0\geq 0} \norm{\bomega(u,j_0)}_\infty<\infty.
\end{equation}
Define
\[
\bS(\Delta,j_0) =  \sum_{u=0}^{\Delta} \sqrt{n_{j_0+u}}\bomega(u,j_0) \bLambda_{j_0+u}(\bd)^{-1}(\hat\bSigma(j_0+u)-\bSigma(j_0+u))\bLambda_{j_0+u}(\bd)^{-1}\,.
\]
Then $\vect{\bS(\Delta,j_0)}$ converges in distribution to $\mathcal N_p(0,\bV^{(S)}(\tilde\bomega,\Delta))$ when $j_0$ goes to infinity, with \[
\bV^{(S)}_{(a,b),(a',b')}(\tilde\bomega,\Delta)=\sum_{u,u'=0,\dots,\Delta} 2^{-u(d_a^0+d_b^0+d_{a'}^0+d_{b'}^0)}\tilde \omega_{a,b}(u) V_{(a,b),(a',b')}(u,u') \tilde \omega_{a',b'}(u').
\]
\end{prop}
\begin{proof}
For all $\ell\geq 0$, introduce 
\[
\tilde S_{a,b}(\ell,j_0) =  \sum_{u=0}^{\ell}   \tilde \omega_{a,b}(u) \sqrt{n_{j_0+u}}2^{-(j_0+u)(d_a+d_b)}(\hat\sigma_{a,b}(j_0+u)-\sigma_{a,b}(j_0+u))\,.
\]
By Theorem~\ref{thm:gauss}, for all $0\leq \ell<\infty$, $\tilde S_{a,b}(\ell,j_0)$ is asymptotically Gaussian, with distribution $\mathcal N_p(0,\bV^{(S)}(\tilde\bomega,\ell))$.

We will first establish the result when $\Delta=j_1-j_0$ is finite and next when it is infinite.

\begin{itemize}
\item {$\Delta$ finite.}~\\
Since $\tilde S_{a,b}(\Delta,j_0)$ is asymptotically Gaussian, it is sufficient to prove that $E\abs{S_{a,b}(\Delta,j_0)-\tilde S_{a,b}(\Delta,j_0)}$ goes to 0 when $j_0$ goes to infinity. Using Lemma~\ref{lem:sigma}, we have 
\begin{equation}\label{eqn:Stemp}
E\abs{S_{a,b}(\Delta,j_0)-\tilde S_{a,b}(\Delta,j_0)} \leq C_\sigma \sum_{u=0}^{\Delta}   \abs{\omega_{a,b}(u,j_0) -\tilde \omega_{a,b}(u)}.
\end{equation}
We conclude with \eqref{eqn:wu}.

\item {$\Delta$ infinite.}~\\
The convergence of $S_{a,b}(\ell,j_0)$ has been established when $\ell$ is finite and \eqref{eqn:wu}-\eqref{eqn:Stemp} imply that 
\[
\lim_{\ell\to\infty}\lim_{j_0\to\infty}\E\abs{S_{a,b}(\ell,j_0)-S_{a,b}(\ell,j_0)}= 0. 
\]
Hence, it is sufficient to prove that $\lim_{\ell\to\infty}\lim_{j_0\to\infty}\E\abs{S_{a,b}(\infty,j_0)-S_{a,b}(\ell,j_0)}= 0.$
Lemma~\ref{lem:sigma} gives
\[
E\abs{S_{a,b}(\infty,j_0)- S_{a,b}(\ell,j_0)} \\
\leq \sum_{u=\ell+1}^{\infty}   \abs{\omega_{a,b}(u,j_0)}\,.
\]
The convergence is obtained using \eqref{eqn:wu}.
\end{itemize}
\end{proof}

The proof of Proposition \ref{prop:cvgceS} above is based on the following lemma.

\begin{lem}
\label{lem:sigma}
Suppose conditions of Theorem~\ref{thm:gauss1} hold. There exists $C_\sigma$ depending on $\bOmega$, $\bd$, $\phi(\cdot)$, $\psi(\cdot)$, $L$ and $\beta$ such that for all $(a,b)\in\{1,\dots,p\}$,
\[
\E\abs{\hat\sigma_{a,b}(j)-\sigma_{a,b}(j)}\leq C_\sigma \bigl(2^{j(d_a+d_b)}n_j^{-1/2}\bigr)\,.
\]
\end{lem}
\begin{proof}
Since we only consider one scale, suppose momentarily that $j_0=j=j_1$. Based on notations of \ref{sec:indexing}, equation \eqref{eqn:defSR}, and Proposition \ref{prop:Rij},
\[
2^{-j(d_a+d_b)}\bigl({\hat\sigma_{a,b}(j)-\sigma_{a,b}(j)}\bigr)=\frac{1}{\sqrt{n_{j}}}(\mS_{a,b}(1,j)-\E(\mS_{a,b}(1,j)))+O_\P(n_j).
\]
Lemma \ref{lem:Ym} and Lemma \ref{lem:Gamma} imply that $\var\bigl(\mS_{a,b}(1,j)\bigr)$ is finite. Lemma~\ref{lem:sigma} is then straightforward.
\end{proof}

\section{Proof of Theorem \ref{thm:d}}

\label{proof:thmd}
In this section the true parameters are denoted with an exponent 0.

Observe that conditions of  \cite[Proposition 6]{AchardGannaz} are satisfied since we suppose assumption \ref{ass:linear}. Consequently, under assumptions of Theorem~\ref{thm:d}, conditions of Theorem 6 of \cite{AchardGannaz} hold. It entails that 
\begin{align}
\hat\bd-\bd^0&=\BigO_\P(2^{-j_0\beta}+N_X2^{j_0/2}),\\
\forall(a,b)\in\{1,\ldots,p\}^2,\,\hat G_{a,b}(\hat \bd)-G_{a,b}(\bd^0) &= \BigO_\P(\log(N)(2^{-j_0\beta}+ N^{-1/2} 2^{j_0/2})).
\end{align}

The proof of Theorem \ref{thm:d} is based on a Taylor expansion of the objective function. We first recall some useful results obtained \cite{AchardGannaz} in \ref{sec:deriv}, next we give a normality result on the first derivative of the objective function in 
\ref{sec:Sn}. \ref{sec:proof:thmd} finally gives the proof of Theorem  \ref{thm:d}.

\subsection{Some results about the objective function and its derivative}
\label{sec:deriv}

The objective function $R(\cdot)$ is equal to $R(\bd)=\log\det \left(\bLambda_{\mj}(\bd)\hat \bG(\bd) \bLambda_{\mj}(\bd)\right)-1$. It is straightforward that $\hat \bd = \argmin_{\bdexp} R(\bd)$ satisfies 
\begin{eqnarray}
\label{eqn:R2}\hat \bd = \argmin_{\bdexp} \barR(\bd)  &\text{~with~}& \barR(\bd)=\log\det\bbarG(\bd)\\ 
\label{eqn:barG} &\text{~and~}& \bbarG(\bd)=\bLambda_{\mj}(\bd-\bd^0)\hat \bG(\bd) \bLambda_{\mj}(\bd-\bd^0).
\end{eqnarray}
 
The derivatives of the criterion $\barR(\bd)$ are equal to
\begin{align}
\label{eqn:derivR}\frac{\partial \barR(\bd)}{\partial d_a}&= \trace\left(\bbarG(\bd)^{-1} \frac{\partial \bbarG(\bd)}{\partial d_a} \right)\;,\\
\label{eqn:deriv2R}\frac{\partial^2 \barR(\bd)}{\partial d_a\partial d_b}&= - \trace\left(\bbarG(\bd)^{-1} \frac{\partial \bbarG(\bd)}{\partial d_b}\bbarG(\bd)^{-1} \frac{\partial \bbarG(\bd)}{\partial d_a} \right)+\trace\left(\bbarG(\bd)^{-1} \frac{\partial^2 \bbarG(\bd)}{\partial d_a \partial d_b} \right)\;.
\end{align}
when $\bbarG(\bd)^{-1}$ exists.

For any $a=1,\ldots,p$, let $\bi_a$ be a $p\times p$ matrix whose $a$-{\it th} diagonal element is one and all other elements are
zero. Let $a$ and $b$ be two indexes in $1,\ldots,p$. The first derivative of $\bbarG(\bd)$ with respect to $d_a$, $\frac{\partial \bbarG(\bd)}{\partial d_a}$, is equal to
\begin{multline*}
 - \log(2) \frac{1}{n} \sum_{j=j_0}^{j_1} n_j\,(j-\mj)\bLambda_{\mj}(\bd-\bd^0){\bLambda_{j}(\bd)}^{-1}\\(\bi_a \bSigma(j)+\bSigma(j)\bi_a){\bLambda_{j}(\bd)}^{-1}\bLambda_{\mj}(\bd-\bd^0),
 \end{multline*} with $\hat\bSigma(j)=(\hat\sigma_{a',b'}(j))_{a',b'=1,\dots,p}$.
Thus
\begin{equation}
\label{eqn:dbarG}
\left.\frac{\partial \bbarG }{\partial d_a}\right|_{\bdexp^0} = -\frac{\log(2)}{n}\sum_{j=j_0}^{j_1} n_j\,(j-\mj)\,{\bLambda_{j}(\bd)}^{-1}(\bi_a \bSigma(j)+\bSigma(j)\bi_a){\bLambda_{j}(\bd)}^{-1}\,.
\end{equation}

\subsection{Asymptotic normality of the first derivative}
\label{sec:Sn}

The objective of this section is to prove that $\sqrt{n}\,\left.\frac{\partial \barR}{\partial \bd}\right|_{\bdexp^0}$ is asymptotically Gaussian. 

\begin{prop}\label{prop:cvgcedR}
Under assumptions of Theorem \ref{thm:d},
\[
\sqrt{n}\left.\frac{\partial \barR}{\partial \bd}\right|_{\bdexp^0} \tend^{\mathcal L}_{j_0\to\infty} \mathcal N_{p}\left(0,4\log(2)^4\kappa_\Delta^2\, \bV^{\bdexp(\Delta)}\right).
\]
where $\bV^{\bdexp(\Delta)}$ is defined in equation~\eqref{eqn:W}.
\end{prop}
\begin{proof}
For any vector $\bv=(\upsilon_a)_{a=1,\dots,p}\in\R^p$, we want to prove that $\bv^T\left.\frac{\partial \barR}{\partial \bd}\right|_{\bdexp^0}$ converges to a centered Gaussian distribution with variance $\bv^T(4\log(2)^2\kappa_\Delta^2 \bV)\bv$.

By \eqref{eqn:derivR}, \[
\bv^T\frac{\partial \barR(\bd)}{\partial d_a}= \sum_{a=1}^p \upsilon_a \trace\left(\bbarG(\bd)^{-1} \frac{\partial \bbarG(\bd)}{\partial d_a} \right).
\]
As expressed in \cite[page 36]{AchardGannaz}, $\bbarG(\bd)$ can be written as
\[
\barG_{a,b}(\bd^0)=G^0_{a,b}+\sum_{j=j_0}^{j_1}\sum_{k=0}^{n_j} \frac{1}{n} \left(\frac{W_a({j,k})W_b({j,k})}{ 2^{j(d_a^0+d_b^0)}}-G_{a,b}^0\right).
\]
Applying \cite[Proposition~8]{AchardGannaz}, we get
\[
\barG_{a,b}(\bd^0)=G^0_{a,b}+\BigO_\P(2^{-j_0\beta}+N_X^{-1/2}2^{-j_1/2})\;.
\]

We introduce
\begin{equation}\label{eqn:Sn}
\tilde S_{j_0}= \sqrt{n}\sum_{a=1}^p \upsilon_a \trace\left(\bG^{0-1}\left.\frac{\partial \bbarG(\bd) }{\partial d_a}\right\rvert_{\bdexp^0}\right)\,.
\end{equation}
It is easily seen that
\begin{equation}\label{eqn:dR}
\sqrt{n}\bv^T\left.\frac{\partial \barR(\bd)}{\partial \bd}\right|_{\bdexp^0} -\tilde S_{j_0} \tend^\P 0.
\end{equation}
Reformulating \eqref{eqn:dbarG}, we get
\[
\left(\left.\frac{\partial \bbarG(\bd)}{\partial d_a}\right|_{\bdexp}\right)_{a,b}=-2\log(2)\frac 1 n \sum_{j=j_0}^{j_1} (j-\mj) 2^{-j(d_a^0+d_b^0)}\,n_j\hat\sigma_{a,b}(j).
\]
Thus we can write $\tilde S_{j_0}$ as
\begin{align} \label{eqn:Sn2}
\tilde S_{j_0} & = \sum_{a,b=1,\dots,p} S_{a,b}^{(\bdexp)}(\Delta,j_0),\\
\nonumber\text{with~~} S_{a,b}^{(\bdexp)}(\Delta,j_0)&=\sum_{u=0}^{\Delta}   \omega^{(S)}_{a,b}(u,j_0) \sqrt{n_{j_0+u}}\,2^{-(j_0+u)(d_a^0+d_b^0)}\,\hat\sigma_{a,b}(j_0+u)\\
\nonumber\text{and~~} \omega_{a,b}^{(S)}(u,j_0) &= -2\log(2) \sqrt{\frac{n_{j_0+u}}{n}}(j_0+u-\mj)\,\upsilon_a (G^{0-1})_{a,b}.
\end{align}
Lemma 13 of \cite{Moulines08Whittle} states that $\mj-j_0\to\eta_\Delta$ when $N_X\to\infty$. When $j_0\to\infty$, $\omega_{a,b}^{(S)}(u,j_0)$ hence converges to $\tilde \omega_{a,b}^{(S)}(u)=-2\log(2) \sqrt{\frac{2^{-u}}{2-2^{-\Delta}}}(u-\eta_\Delta)\,\upsilon_a (G^{0-1})_{a,b}.$ 
Moreover \eqref{eqn:wu} holds for $\Delta\in\N\cup\{\infty\}$.

Applying Proposition \ref{prop:cvgceS}, $\vect{\bS^{(\bdexp)}(\Delta,j_0)-\E(\bS^{(\bdexp)}(\Delta,j_0))}$ is asymptotically Gaussian, with distribution $\mathcal N_p(0,\bV^{(S)}(\tilde\bomega^{(S)},\Delta))$. Consequently, $\tilde S_{j_0}-\E(\tilde S_{j_0})$ follows asymptotically the Gaussian distribution $\mathcal N_p(0,\sum_{a,b,a',b'=1,\dots,p}\bV^{(S)}_{(a,b),(a',b')}(\tilde\bomega^{(S)},\Delta))$.

The end of the proof is divided into two steps. First we prove that $\E(\tilde S_{j_0})$ goes to 0 when $j_0$ goes to infinity, and next we establish that the asymptotic variance above, that is, $\sum_{a,b,a',b'=1,\dots,p}\bV^{(S)}_{(a,b),(a',b')}(\tilde\bomega^{(S)},\Delta)$, is equal to $4\log(2)^2\kappa_\Delta^2\bv^T \bV^{\bdexp(\Delta)}\bv$.

\item 
\paragraph[Asymptotic expectancy]{Convergence of $\E(\tilde S_{j_0})$ toward 0}~\\
Taking the expectancy of $\tilde S_{j_0}$,
\[ \E(\tilde S_{j_0})= -\frac{2\log(2)}{\sqrt{n}}\sum_{j=j_0}^{j_1} n_j(j-\mj)\sum_{a,b=1,\dots,p} \upsilon_a\,(G^{0-1})_{a,b}2^{-j(d_a^0+d_b^0)}\sigma_{a,b}(j).\] 
Since $\sum_{j=j_0}^{j_1} n_j(j-\mj)=0$.
\[
\E(\tilde S_{j_0})= -\frac{2\log(2)}{\sqrt{n}}\sum_{j=j_0}^{j_1} n_j(j-\mj)\sum_{a,b=1,\dots,p} \upsilon_a\,(G^{0-1})_{a,b}\bigl(2^{-j(d_a^0+d_b^0)}\sigma_{a,b}(j)-G^0_{a,b}\bigr).
\]
Using Proposition \ref{prop:approx}, \[
\E\lvert \tilde S_{j_0}\rvert =O\bigl(\sqrt{n} j_0 2^{-j_0\,\beta} \bigr). 
\]
Thus $\E(\tilde S_{j_0})$ converges to zero when $j_0^2\,N_X2^{-j_0(1+2\beta)}\to 0$.

\item 
\paragraph[Asymptotic variance]{Expression of $\sum_{a,b,a',b'=1,\dots,p}\bV^{(S)}_{(a,b),(a',b')}(\tilde\bomega^{(S)},\Delta)$}~\\
It remains to prove that $\sum_{a,b,a',b'=1,\dots,p}\bV^{(S)}_{(a,b),(a',b')}(\tilde\bomega^{(S)},\Delta)=4\log(2)^2\kappa_\Delta^2\bv^T \bV^{\bdexp(\Delta)}\bv$. By expanding the expression, 
\begin{align*}
\lefteqn{\sum_{a,b,a',b'=1,\dots,p}\bV^{(S)}_{(a,b),(a',b')}(\tilde\bomega^{(S)},\Delta))}\\
& = \frac{4\log(2)^2}{2-2^{-\Delta}}\sum_{a,b,a',b'=1,\dots,p} \upsilon_a \upsilon_{a'} (G^{0-1})_{a,b}(G^{0-1})_{a',b'} \sum_{u=0}^{\Delta}\sum_{u'=0}^{\Delta} 2^{-u/2-u'/2} (u-\eta_\Delta)(u'-\eta_\Delta)\\
 & \pushright{2^{-u(d_a^0+d_b^0)-u'(d_{a'}^0+d_{b'}^0)}V_{(a,b),(a',b')}(u,u')}\\
& = 4\,{\log(2)^2}\sum_{a,b,a',b'=1,\dots,p} \upsilon_a \upsilon_{a'} (G^{0-1})_{a,b}(G^{0-1})_{a',b'} \left(G_{aa'}^0G_{bb'}^0\mathcal I^{(S)}(d_a^0+d_{a'}^0,d_b^0+d_{b'}^0) \right.\\&\pushright{\left.+ G_{ab'}^0G_{a'b}^0\mathcal I^{(S)}(d_a^0+d_{b'}^0,d_{a'}^0+d_b^0)\right),}
\end{align*}
where
\begin{multline*}
\mathcal I^{(S)}(\delta_1,\delta_2)=\frac{1}{2-2^{-\Delta}}\sum_{u=0}^{\Delta}\sum_{u'=0}^{\Delta} 2^{-u/2-u'/2}(u-\eta_\Delta)(u'-\eta_\Delta)\\
2^{(\delta_1+\delta_2)u\vee u'-|u-u'|/2}2^{-u\delta_1-u'\delta_2}\tilde I_{\lvert u-u'\rvert}(\delta_1,\delta_2) \,.
\end{multline*}
We can formulate this expression to recover a similar form to that of  \cite[Theorem 5]{Moulines08Whittle} and \cite[Theorem 5]{Roueff09asymptotic}. The arguments are the same than those used in the proof of \cite[Proposition 10]{Moulines08Whittle}, but are recalled here to explicit the form of the variance.  We can express $\mathcal I^{(S)}(\delta_1,\delta_2)$ as:
\begin{multline*}
{\mathcal I^{(S)}(\delta_1,\delta_2)=\frac{1}{2-2^{\Delta}}\sum_{\gamma'=0}^\Delta (\gamma'-\eta_\Delta)^2\,2^{-\gamma'}\tilde I_{0}(\delta_1,\delta_2)}\\
 +  \frac{1}{2-2^{\Delta}}\sum_{\gamma=1}^{\Delta}\sum_{\gamma'=0}^{\Delta-\gamma}(\gamma+\gamma'-\eta_\Delta)(\gamma'-\eta_\Delta)2^{-\gamma-\gamma'}{(2^{\gamma\delta_1}+2^{\gamma\delta_2})\tilde I_{\gamma}(\delta_1,\delta_2) \,,}
\end{multline*}
where we set $\gamma=\abs{u- u'}$ and $\gamma'=u\wedge u'$.
We can use the equalities $\sum_{\gamma'=0}^{\Delta-\gamma}(\gamma'-\eta_\Delta)^22^{-\gamma'}=(2-2^{-\Delta+\gamma})(\kappa_{\Delta-\gamma}+(\eta_{\Delta-\gamma}-\eta_\Delta)^2)$ and $\sum_{\gamma'=0}^{\Delta-\gamma}(\gamma'-\eta_\Delta)2^{-\gamma'}=(2-2^{-\Delta+\gamma})(\eta_{\Delta-\gamma}-\eta_\Delta)$.  We obtain
\begin{multline*}
\mathcal I^{(S)}(\delta_1,\delta_2)
=\kappa_\Delta \tilde I_{0}(\delta_1,\delta_2)\\
+  \frac{1}{2-2^{\Delta}}\sum_{\gamma=1}^{\Delta}(2-2^{-\Delta+\gamma})((\gamma+\eta_{\Delta-\gamma}-\eta_\Delta)(\eta_{\Delta-\gamma}-\eta_\Delta)+\kappa_{\Delta-\gamma})\,2^{-\gamma}{(2^{\gamma\delta_1}+2^{\gamma\delta_2})}\tilde I_{\gamma}(\delta_1,\delta_2) \,.
\end{multline*}
We see that when $\Delta < \infty$, $\mathcal I^{(S)}(\delta_1,\delta_2)= \frac{\kappa_\Delta^2}{2}\mathcal I_\Delta(\delta_1,\delta_2)$, with $\mathcal I_\Delta(\delta_1,\delta_2)$ defined in \eqref{eqn:Idelta}. Hence, $\sum_{a,b,a',b'=1,\dots,p}\bV^{(S)}_{(a,b),(a',b')}(\tilde\bomega^{(S)},\Delta)=4\log(2)^2\kappa_\Delta^2\bv^T \bV^{\bdexp(\Delta)}\bv$.

When $\Delta$ goes to infinity, the sequence $\kappa_\Delta$ converges to $2$ and the sequence $\eta_\Delta$ converges to $1$.
 We deduce the asymptotic form \eqref{eqn:Iinf} when $\Delta\to\infty$ by dominated convergence.
\end{proof}

\subsection{Proof of Theorem~\ref{thm:d}}
\label{sec:proof:thmd}

The Taylor expansion of $\frac{\partial \barR(\bd)}{\partial \bd}$ at $\hat \bd$ at the neighborhood of $\bd^0$ gives 
\begin{equation}
\label{eqn:DLd}
\sqrt{n}(\hat\bd -\bd^0)=\left(\left.\frac{\partial^2 \barR}{\partial \bd\partial \bd^T}\right|_{\bar \bd}\right)^{-1} \sqrt{n}\,\left.\frac{\partial \barR}{\partial \bd}\right|_{\bdexp^0},
\end{equation}
 where $\bar{\bd}$ is such that $\|\bar{\bd}-\bd^0\|\leq\|\hat \bd-\bd^0\|$. 

It has already been established in \cite[Equation (E10)]{AchardGannaz} that, under assumptions of Theorem~\ref{thm:d},
\begin{equation}
\label{eqn:dR2} \left.\frac{\partial^2 \barR(\bd)}{\partial \bd \partial \bd^T}\right\lvert_{\bar{\bdexp}}\tend^\P  
\kappa_{j_1-j_0}\log(2)^2 2(\bG^{0-1}\circ \bG^0+\bI_p)\,.
\end{equation}
This justifies also that the matrix $\left( \left.\frac{\partial^2 R}{\partial \bdexp\partial \bdexp^T}\right|_{\bar \bdexp}\right)$ in \eqref{eqn:DLd} is indeed invertible for sufficiently high $N_X$ when $2^{-j_0 \beta}+N_X^{-1/2}2^{j_0/2}\to 0$.

Next Proposition~\ref{prop:cvgcedR} establishes that $\left.\frac{\partial \barR}{\partial \bdexp}\right|_{\bdexp^0}$ converges to a centered Gaussian distribution with variance $(2\log(2)^2\kappa_{j_1-j_0}^2\, \bV^{\bdexp(\Delta)})$. Theorem \ref{thm:d} then follows with \eqref{eqn:DLd} and \eqref{eqn:dR2}.

\section{Proof of Theorem \ref{thm:G}}

\label{proof:thmG}
In this section the true parameters are denoted with an exponent 0.

We have $\hat G_{a,b}(\bd)=2^{\mj(d_a-d_a^0+d_b-d_b^0)}\tilde  G_{a,b}(\bd)$ and $\hat G_{a,b}(\bd^0)=\bar G_{a,b}(\bd^0)$ with $\bbarG(\bd)$ defined in~\eqref{eqn:barG}. As $2^{\mj\, u}-1=j_0\,u\,\log(2)(1+o(1))$ when $u\to 0$, we deduce that \begin{equation} \label{eqn:diffGbar}
\hat G_{a,b}(\hat\bd) - \tilde  G_{a,b}(\hat\bd) =j_0\,(\hat d_a-d_a^0+\hat d_b-d_b^0)\,\log(2)(1+o(1))\bar G_{a,b}(\hat\bd)\,.
\end{equation}
Since $j_0\,(\hat d_a-d_a^0+\hat d_b-d_b^0)=o_{\P}(1)$, it is sufficient to establish the asymptotic distribution of $\barG_{a,b}(\hat\bd)$. More precisely, we want to prove that $\sqrt{n}\,\vect{\bar \bG(\hat\bd)-\bG^0} $ converges in distribution to a centered Gaussian distribution. We decompose $\sqrt{n}\vect{\tilde \bG(\hat\bd)-\bG^0}$ as \begin{equation}\label{eqn:Gdecompose}
\sqrt{n}(\tilde \bG(\hat\bd)-\bG^0)=\sqrt{n}\left(\bar\bG(\bd^0)-\bG^0\right)+\sqrt{n}\left(\tilde \bG(\hat\bd)-\tilde \bG(\bd^0)\right).
\end{equation}
The first term converges to the desired distribution as established in Lemma \ref{lem:Gbarnorm}, while the second one is negligible by Lemma \ref{lem:Gbar0}.

Lemma \ref{lem:Gbarnorm} and Lemma \ref{lem:Gbar0} are given hereafter.

\subsection{Study of $\sqrt{n}\left(\bbarG(\bd^0)-\bG^0\right)$}

\begin{lem}
\label{lem:Gbarnorm}
Under assumptions of Theorem \ref{thm:G},
$\sqrt{n}\vect{\bar\bG(\bd^0)-\bG^0}$ converges as $j_0$ goes to infinity to a centered Gaussian distribution, with variance $\bTheta^{\bGexp(\Delta)}$ defined in \eqref{eqn:WG}.
\end{lem}
\begin{proof}
Consider $\bT_0(j_0)= \vect{\sqrt{n}\left(\bar\bG(\bd^0)-\E(\bar\bG(\bd^0))\right)}$.
Recall that 
\[
\barG_{a,b}(\bd^0)=\frac{1}{n}\sum_{j=j_0}^{j_1} n_j 2^{-j(d_a^0+d_b^0)}\hat\sigma_{a,b}(j)\,.
\]
Using inequality \eqref{eqn:theta},
\[
\E(\barG_{a,b}(\bd^0))=G_{a,b}^0(j)+\BigO\left(2^{j_0\beta}\right)\,,
\]
and consequently $\sqrt{n}\left(\E(\barG_{a,b}(\bd^0))-G_{a,b}^0(j)\right) = o(1)$.

We can write $\sqrt{n}\vect{\bar\bG(\bd^0)-\bG^0}$ as
\begin{multline}\label{eqn:Gw}
\sqrt{n}\left(\barG_{a,b}(\bd^0)-\E(\barG_{a,b}(\bd^0))\right)\\
= \sum_{u=0}^{j_1-j_0} \omega^{(\bGexp)}({u},j_0) \sqrt{n_{j_0+u}}\,2^{-(j_0+u)(d_a^0+d_b^0)}(\hat\sigma_{a,b}(j_0+u)-\sigma_{a,b}(j_0+u)),
\end{multline}
with $\omega^{(\bGexp)}(u,j_0)=\sqrt{n_{j_0+u}/n}$. The sequence $\omega^{(\bGexp)}(u)$ converges to $\tilde \omega^{(\bGexp)}(u)=2^{-u/2}/\sqrt{2-2^{-\Delta}}$ when $j_0$ goes to infinity. Applying Proposition \ref{prop:cvgceS}, we obtain that $\bT_0(j_0)-\E(\bT_0(j_0))$ converges as $n$ goes to infinity to a centered Gaussian distribution, with variance
\begin{align*}
\lefteqn{\lim_{j_0\to\infty}\cov\left({T_{0\,a,b}(j_0),T_{0\,a',b'}(j_0)}\right)}\\&=\sum_{u=0}^\Delta \sum_{u'=0}^\Delta 2^{-u(d_a^0+d_b^0)-u'(d_{a'}^0+d_{b'}^0)}\frac{2^{-u/2-u'/2}}{2-2^{-\Delta}}V_{(a,b),(a',b')}(u,u')\\
&= 2\pi\left(G_{a,a'}^0G_{b,b'}^0{\mathcal I_\Delta^G(d_a+d_{a'},d_b+d_{b'})}+G_{a,b'}^0G_{b,a'}^0\,{\mathcal I_\Delta^G(d_a+d_{b'},d_b+d_{a'})}\right)\,,
\end{align*}
where
\[
\mathcal I_\Delta^G(\delta_1,\delta_2) = 
\sum_{u=0}^\Delta \sum_{u'=0}^\Delta 2^{(u\vee u'-u)\delta_1+(u\vee u'-u')\delta_2}\,\frac{2^{-u/2-u'/2-|u-u'|/2}}{2-2^{-\Delta}}\tilde I_{\abs{u'-u}}(\delta_1,\delta_2).
\]
Quantity $\mathcal I^G(\delta_1,\delta_2)$ can be simplified as:
\begin{align*}
\mathcal I_\Delta^G(\delta_1,\delta_2) & =  \sum_{u=0}^{\Delta} \frac{2^{-u}}{2-2^{-\Delta}}\tilde I_{0}(\delta_1,\delta_2)+
\sum_{u=1}^\Delta (2^{u\delta_1}+2^{u\delta_2})2^{-u}\,\tilde I_{u}(\delta_1,\delta_2) \sum_{v=0}^{\Delta-u} \frac{2^{-v}}{2-2^{-\Delta}}\\
&=\tilde I_{0}(\delta_1,\delta_2)+\sum_{u=1}^\Delta (2^{u\delta_1}+2^{u\delta_2})2^{-u}\,\frac{2-2^{-\Delta+u}}{2-2^{-\Delta}}\,\tilde I_{u}(\delta_1,\delta_2) .
\end{align*}
When $\Delta$ goes to infinity,
\[
\mathcal I_\infty^G(\delta_1,\delta_2) =\tilde I_{0}(\delta_1,\delta_2)+\sum_{u=1}^\infty (2^{u\delta_1}+2^{u\delta_2})2^{-u}\,\tilde I_{u}(\delta_1,\delta_2) .
\]
\end{proof}

\subsection{Study of $\sqrt{n}\left(\bbarG(\hat\bd)-\bbarG(\bd^0)\right)$}

\begin{lem}
\label{lem:Gbar0}
Under assumptions of Theorem \ref{thm:G},
$
\sqrt{n}\left(\bbarG(\hat\bd)-\bbarG(\bd^0)\right)$ tends to 0 in probability when $j_0$ goes to infinity.
\end{lem}
\begin{proof}
A Taylor expansion at order one at $\bd^0$ gives
\begin{equation*}
\sqrt{n}\left(\bbarG(\hat\bd)-\bbarG(\bd^0)\right)=\sqrt{n}\left(\left.\frac{\partial \bbarG }{\partial \bd}\right|_{\bdexp^0}\right)(\hat\bd-\bd^0)+\sqrt{n}(\hat\bd-\bd^0)^T\left(\left.\frac{\partial^2 \bbarG }{\partial \bd\partial \bd^T}\right|_{\bar{\bd}}\right)(\hat\bd-\bd^0),
\end{equation*}
with $\norm{\bar\bd-\bd^0}\leq\norm{\hat\bd-\bd^0}$. Achard and Gannaz in \cite[Section E.2.4.]{AchardGannaz} state that, when $2^{-j_0\beta}+N_X^{-1/2}2^{j_0/2}\to 0$, \[
\left.\frac{\partial^2 \bbarG }{\partial \bd\partial \bd^T}\right|_{\bdexp^0}=\BigO_{\P}(1)\,.
\]
 Thus \[
\sqrt{n}(\hat\bd-\bd^0)^T\left(\left.\frac{\partial^2 \bbarG }{\partial \bd\partial \bd^T}\right|_{\bar{\bd}}\right)(\hat\bd-\bd^0)=o_{\P}(1).
\]

Next, using the derivative of $\bar\bG(\bd)$ given in \eqref{eqn:dbarG}, we have:
\begin{multline*}
\sqrt{n}\left(\left.\frac{\partial \bbarG }{\partial \bd}\right|_{\bdexp^0}(\hat\bd-\bd^0)\right)_{a,b}\\ = -\frac{\log(2)}{\sqrt{n}}\sum_{j=j_0}^{j_1} n_j\,(j-\mj)\,2^{-j(d_a^0+d_b^0)}\,\hat\sigma_{a,b}(j)(\hat d_a-d_a^0+\hat d_b-d_b^0)\,.
\end{multline*}
Similarly to what was done in \ref{proof:thmd}, we can establish that 
$\frac{\log(2)}{\sqrt{n}}\sum_{j=j_0}^{j_1} n_j\,(j-\mj)\,2^{-j(d_a^0+d_b^0)}\,\hat\sigma_{a,b}(j)$ is asymptotically Gaussian. Since $(\hat d_a-d_a^0+\hat d_b-d_b^0)\to 0$ we conclude that this term goes to 0, which concludes the proof.
\end{proof}

\section{Proof of Proposition~\ref{prop:dG}}
\label{proof:thmdG}

In this section the true parameters are denoted with an exponent 0.

From the proof of Theorem~\ref{thm:d} (using equations \eqref{eqn:DLd}, \eqref{eqn:dR2}, \eqref{eqn:dR} and \eqref{eqn:Sn2}) and Theorem~\ref{thm:G} (using equations \eqref{eqn:diffGbar}, \eqref{eqn:Gdecompose}, \eqref{eqn:Gw} and Lemma~\ref{lem:Gbar0}), we can extract the following results
\begin{align*} 
&\pushleft{\left((\bG^{0-1}\circ \bG^0+\bI_p)\sqrt{n}(\hat \bd -\bd^0)\right)_{a'}} \\
&\quad= \sum_{b'=1}^p
{\sum_{u=0}^{j_1-j_0}  \omega_{a',b'}^{(\bdexp)}(u,j_0) \sqrt{n_{j_0+u}}\,2^{-(j_0+u)(d_{a'}^0+d_{b'}^0)}(\hat\sigma_{a',b'}(j)-\sigma_{a',b'}(j))(1+o_{\P}(1))}{+o_{\P}(1),}\\
&\pushleft{\sqrt{n}\left(\hat G_{a,b}(\hat \bd)-G_{a,b}^0\right)}\\ 
&\quad=\sum_{u=0}^{j_1-j_0} \omega_{a,b}^{(\bGexp)}(u,j_0) \sqrt{n_{j_0+u}}\,2^{-(j_0+u)(d_{a'}^0+d_{b'}^0)}(\hat\sigma_{a,b}(j)-\sigma_{a,b}(j))(1+o_{\P}(1))+o_{\P}(1),
\end{align*}
with
\begin{align*}
\omega_{(a',b')}^{(\bdexp)}(u,j_0) &= -\left(\kappa_{j_1-j_0}\log(2)\right)^{-1}\sqrt{n_{j_0+u}/n}\,(j_0+u-\mj)\,(G^{0-1})_{a',b'}\,,\\
\omega_{(a,b)}^{(\bGexp)}(u,j_0) &= \sqrt{n_{j_0+u}/n}.
\end{align*}
Hence, linear combinations of $\sqrt{n}(\hat d_{a'} -d_{a'})$ and $\sqrt{n}\left(\hat G_{a,b}(\hat \bd)-G_{a,b}^0))\right)$ can be written as $\sum_{a'',b''=1,\dots,p} \sum_{u=0}^{j_1-j_0}   \omega_{a'',b''}(u,j_0)\sqrt{n_{j_0+u}}\,2^{-(j_0+u)(d_{a'}^0+d_{b'}^0)} (\hat\sigma_{\ell,m}(j)-\sigma_{\ell,m}(j))(1+o_{\P}(1))+o_{\P}(1)$ with $\omega_{a'',b''}(u,j_0)$ linear combination of $(\omega_{a''',b'''}^{(\bdexp)}(u,j_0))_{a''',b'''=1,\dots,p}$ and  $(\mW_{a''',b'''}^{(\bGexp)}(u,j_0))_{a''',b'''=1,\dots,p}$. Proposition \ref{prop:cvgceS} gives the joint convergence to a Gaussian distribution.

It remains to explicit the asymptotic covariance between $\hat \bd$ and $\hat \bG(\hat\bd)$. Asymptotically, the asymptotic covariance between $\sqrt{n}(\hat d_{a'}-d_{a'}^0)$ and $\sqrt{n}(\hat G_{a,b}(\hat \bd)-G_{a,b}^0)$ is 
\begin{align}
\label{eqn:covdG}
V_{a',(a,b)}^{d,G (\Delta)} =& \left(\kappa_{\Delta}2\log(2)\right)^{-1}(2-2^{-\Delta})^{-1}\sum_{u=0}^\Delta \sum_{u'=0}^\Delta 2^{-u-u'-\abs{u'-u}/2}(j_0+u-\mj)\\ 
\nonumber& 2^{(u\wedge u'-u')(d_a^0+d_b^0)}
\sum_{b'=1}^p \left((\bG^{0-1}\circ \bG^0+\bI_p)^{-1}\right)_{a',b'}2^{(u\wedge u'-u)(d_{a'}^0+d_{b'}^0)}\\
\nonumber&  \left(G_{a',a}^0G_{b',b}^0{\tilde I_{|u-u'|}(d_{a'}+d_{a},d_{b'}+d_{b})}+G_{a',b}^0G_{a,b'}^0\,{\tilde I_{|u-u'|}(d_{a'}+d_{b},d_{b'}+d_{a})}\right)\\
\label{eqn:covdG2}=& \left(4\log(2)\right)^{-1}\sum_{b'=1}^p((\bG^{0-1}\circ \bG^0+\bI_p)^{-1})_{a',b'}\\
\nonumber&  {\left(G_{a',a}^0G_{b',b}^0{\mathcal  I^{d,G}_{\Delta}(d_{a'}+d_{a},d_{b'}+d_{b})}+G_{a',b}^0G_{a,b'}^0\,{\mathcal I^{d,G}_{\Delta}(d_{a'}+d_{b},d_{b'}+d_{a})}\right),}
\end{align}
with 
\begin{align}
\nonumber\mathcal I^{d,G}_{\Delta}(\delta_1,\delta_2)=& \frac{2}{\kappa_{\Delta}}\sum_{u=0}^\Delta \frac{2^{-u}}{2-2^{-\Delta}}(u-\eta_\Delta)\tilde I_{|u-u'|}(\delta_1,\delta_2)\\ 
\nonumber& + \frac{1}{\kappa_{\Delta}}\sum_{u=1}^\Delta 2^{-u}\tilde I_{u}(\delta_1,\delta_2)\sum_{v=0}^{\Delta-u}\frac{2^{-v}}{2-2^{-\Delta}}\left(2^{u\delta_1}(u-\eta_\Delta)+2^{u\delta_2}(u+v-\eta_\Delta)\right)\\
=&\frac{1}{\kappa_{\Delta}}\sum_{u=1}^\Delta 2^{-u}\tilde I_{u}(\delta_1,\delta_2)\left((2^{u\delta_1}+2^{u\delta_2})\frac{2-2^{-\Delta-u}}{2-2^{-\Delta}}\,(u-\eta_\Delta)\,+2^{u\delta_2}\,\eta_{\Delta-u}\right)\,.\\
\label{eqn:covdG3}
\end{align}

\section{Technical lemmas}

\label{proof:technical}

We first recall some inequalities on wavelet filters given in \cite[Proposition 3]{Moulines07SpectralDensity}.
\begin{prop}
Under {\ref{ass:Wcompact}--\ref{ass:Wmoments}}, there exist positive constants $C_{H1}$, $C_{H2}$ and $C_{H3}$ only depending on $\phi$ and $\psi$, such that, for all $j, j'\geq 0$ and $\lambda\in(-\pi,\pi)$,
\begin{align}
\label{eqn:Hj}
\abs{\H_{j}(\lambda)}&\leq C_{H1} 2^{j/2}\abs{2^{j}\lambda}^M(1+2^{j}\abs{\lambda})^{-\alpha-M},\\
\label{eqn:Hjapprox}
\abs{\H_{j}(\lambda)-2^{j/2}\hat\phi(\lambda) \overline{\hat\psi(2^j\lambda)}}&\leq C_{H2} 2^{j/2-j\alpha}\abs{\lambda}^M,
\end{align}
\begin{equation}
\label{eqn:Hj2approx} {\abs{{\H_{j}(\lambda)}\overline{\H_{j'}(\lambda)}-2^{j/2+j'/2}\lvert\hat\phi(\lambda)\rvert^2\, \overline{\hat\psi(2^j\lambda)} {\hat\psi(2^{j'}\lambda)}} \leq C_{H3} 2^{(j+j')(M-\alpha+1/2)}\abs{\lambda}^{2M}\,.}
\end{equation}
\end{prop}
\begin{proof}
Inequalities \eqref{eqn:Hj} and \eqref{eqn:Hjapprox} are proved in \cite[Proposition 3]{Moulines07SpectralDensity}. 
Next,
\begin{align*}
\lefteqn{\abs{{\H_{j}(\lambda)}\overline{\H_{j'}(\lambda)}-2^{j/2+j'/2}\lvert\hat\phi(\lambda)\rvert^2\, \overline{\hat\psi(2^j\lambda)} {\hat\psi(2^{j'}\lambda)}}} \\
& \leq  \abs{{\H_{j}(\lambda)}}\abs{\H_{j'}(\lambda)-2^{j'/2}\hat\phi(\lambda)\, \overline{\hat\psi(2^{j'}\lambda)}}+ \abs{\H_{j'}(\lambda)}\abs{\H_{j}(\lambda)-2^{j/2}\hat\phi(\lambda)\, \overline{\hat\psi(2^{j}\lambda)}}\\
&\qquad+\abs{\H_{j'}(\lambda)-{2^{j'/2}\hat\phi(\lambda)\, \overline{\hat\psi(2^{j'}\lambda)}}}\abs{\H_{j}(\lambda)-2^{j/2}\hat\phi(\lambda)\, \overline{\hat\psi(2^{j}\lambda)}}.
\end{align*}
Applying inequalities \eqref{eqn:Hj} and \eqref{eqn:Hjapprox} to the right-hand side gives \eqref{eqn:Hj2approx}.
\end{proof}

The following lemma is \cite[Lemma 1]{Moulines07SpectralDensity}. It is used in the proofs of Lemma~\ref{lem:cov} and of Lemma \ref{lem:Ym}.
\begin{lem}[\cite{Moulines07SpectralDensity}] \label{lem:diff_sum}
Let $Q\in\N$. For all function $g\in L^2(-\pi,\pi)$, write
\[M_Q(g)=\left(\sum_{q\in\Z}(1-\lvert q\rvert/Q)_+ \left(\int_{-\pi}^\pi g(\lambda)\rme^{-\rmi\,q\,\lambda}\rmd\lambda\right)^2\right)^{1/2}.
\]
Suppose $g_1$ and $g_2$ are $\C$-valued functions of $L^2((-\pi,\pi))$. 
Then, 
\[
\abs{M_Q(g_1) -M_Q(g_2)}^2\leq 2\pi\int_{-\pi}^\pi \abs{g_1(\lambda)-g_2(\lambda)}^2\rmd\lambda.
\]
\end{lem}

Next, the following lemma states the convergence of a series of bivariate Fourier coefficients. It is used in the proofs of Lemma~\ref{lem:Sigma_m} and of Lemma~\ref{lem:Gamma_final}.

\begin{lem} \label{lem:conv_sum}
Suppose $\{w_1^\ast(\lambda),\,\lambda\in(-\pi,\pi)\}$ and $\{w_2^\ast(\lambda),\,\lambda\in(-\pi,\pi)\}$ are $\C$-valued functions of $L^2((-\pi,\pi))$. 
Then
\[\sum_{q\in\Z}(1-\lvert q\rvert/Q)_+ \left(\int_{-\pi}^\pi w_1^\ast(\lambda)\rme^{-\rmi\,q\,\lambda}\rmd\lambda\right) \left(\int_{-\pi}^\pi w_2^\ast(\lambda)\rme^{-\rmi\,q\,\lambda}\rmd\lambda\right) \tend_{Q\to\infty} 2\pi\int_{-\pi}^\pi \overline{w_1^\ast(\lambda)}w_2^\ast(\lambda)\rmd\lambda.\]
\end{lem}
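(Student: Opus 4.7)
The plan is to recognize the left-hand side as a Fejér-kernel average of the convolution $w_1^\ast\ast w_2^\ast$ and then to invoke the approximate-identity property of the Fejér kernel in $L^2$ of the circle.

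First I would interchange the summation and the two integrations. This is legitimate because only the finitely many indices $|q|<Q$ contribute to the weighted sum, so Fubini applies trivially. One thereby rewrites the left-hand side as
\[
\int_{-\pi}^\pi\!\int_{-\pi}^\pi w_1^\ast(\lambda)\,w_2^\ast(\mu)\,F_Q(\lambda+\mu)\,d\lambda\,d\mu,\qquad F_Q(\theta):=\sum_{|q|<Q}\Bigl(1-\frac{|q|}{Q}\Bigr)\rme^{-\rmi q\theta},
\]
where $F_Q$ is (a multiple of) the usual Fejér kernel, extended $2\pi$-periodically in $\theta$. The fact that $F_Q$ is even in $\theta$ and non-negative will be used implicitly.

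Next I would apply the classical fact that $F_Q/(2\pi)$ forms an approximate identity on the torus: for every $f\in L^2((-\pi,\pi))$ extended $2\pi$-periodically, the circular convolution $F_Q\ast f$ converges in $L^2$ norm to $2\pi f$ as $Q\to\infty$ (Fejér--Lebesgue theorem). Applied to $f=w_1^\ast$, after the change of variable $\lambda\mapsto -\lambda$ this gives $\int_{-\pi}^\pi w_1^\ast(\lambda)F_Q(\lambda+\mu)d\lambda\to 2\pi\,w_1^\ast(-\mu)$ in $L^2(d\mu)$. Combining this with $w_2^\ast\in L^2$ via the Cauchy--Schwarz inequality then yields
\[
\int_{-\pi}^\pi\!\int_{-\pi}^\pi w_1^\ast(\lambda)\,w_2^\ast(\mu)\,F_Q(\lambda+\mu)\,d\lambda\,d\mu\;\tend_{Q\to\infty}\;2\pi\int_{-\pi}^\pi w_1^\ast(-\mu)\,w_2^\ast(\mu)\,d\mu.
\]

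To recover the right-hand side of the lemma one invokes the Hermitian symmetry $w_1^\ast(-\lambda)=\overline{w_1^\ast(\lambda)}$. This symmetry is implicit in all applications of the statement inside the paper: the relevant functions $\tilde w_{i,i'}^{(a,b)\ast}$ are constructed via \eqref{eqn:w} from Fourier transforms of the real-valued wavelet $\psi$ and of the real sequence $\bA$, so they inherit the conjugate symmetry directly from $\hat\psi(-\xi)=\overline{\hat\psi(\xi)}$ (a fact already used in the proof of Lemma~\ref{lem:v}). The only delicate step is the second one, namely the $L^2$-norm convergence of the Fejér convolution; this is a classical approximate-identity result but requires a careful periodization argument to move from the segment $(-\pi,\pi)$ to the circle. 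Once this is in place, the remaining Cauchy--Schwarz bound and the Hermitian-symmetry identification are routine.
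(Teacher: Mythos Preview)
Your Fejér-kernel argument is correct, but the paper takes a considerably shorter route that stays entirely on the Fourier side. It writes the sum as $\sum_q (1-|q|/Q)_+\, c_{1q}c_{2q}$ with $c_{iq}=\int_{-\pi}^\pi w_i^\ast(\lambda)e^{-iq\lambda}\,d\lambda$, notes that $\sum_q|c_{1q}c_{2q}|<\infty$ by Cauchy--Schwarz and Parseval, and then simply lets $Q\to\infty$ by dominated convergence to obtain $\sum_q c_{1q}c_{2q}$, which it identifies with $2\pi\int\overline{w_1^\ast}w_2^\ast$ via Parseval. No approximate-identity machinery or $L^2$-convergence of Cesàro means is needed; the whole proof lives in $\ell^2(\Z)$. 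Your approach, while heavier, has the merit of making the limit $2\pi\int w_1^\ast(-\mu)w_2^\ast(\mu)\,d\mu$ appear transparently.

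You are also right to flag the Hermitian symmetry $w_1^\ast(-\lambda)=\overline{w_1^\ast(\lambda)}$: without it the correct limit is $2\pi\int w_1^\ast(-\mu)w_2^\ast(\mu)\,d\mu$, not $2\pi\int\overline{w_1^\ast}w_2^\ast$. The paper's final Parseval step implicitly uses this symmetry (equivalently, reality of the $c_{1q}$) without stating it, so your observation that the hypothesis is needed and is indeed satisfied in every application of the lemma is a genuine clarification of the paper's argument.
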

\begin{proof}
Note that \[
\sum_{q\in\Z}(1-\lvert q\rvert/Q)_+ \left(\int_{-\pi}^\pi w_1^\ast(\lambda)\rme^{-\rmi\,q\,\lambda}\rmd\lambda\right) \left(\int_{-\pi}^\pi w_2^\ast(\lambda)\rme^{-\rmi\,q\,\lambda}\rmd\lambda\right)= \sum_{q\in\Z}(1-\lvert q\rvert/Q)_+ c_{1q}\,c_{2q},
\]
with $c_{1q}$ and $c_{2q}$ the $q^\text{th}$ Fourier coefficient respectively of functions $w_1^\ast$ and $w_2^\ast$. 

The sequence $(1-\lvert q\rvert/Q)_+ c_{1q}c_{2q}$ converges to $ c_{1q}c_{2q}$ when $Q$ goes to infinity. By Cauchy-Schwarz's inequality and Parseval's equality, 
\[
\sum_{|q|\leq Q}\lvert c_{1q}\,c_{2q}\rvert\leq 
 2\pi\left(\int_{-\pi}^\pi \abs{w_i^\ast(\lambda)}^2\rmd\lambda\right)^{1/2}\left(\int_{-\pi}^\pi \abs{w_i^\ast(\lambda)}^2\rmd\lambda\right)^{1/2}<\infty.
 \]
Thus dominated convergence entails that the series $\sum_{q\in\Z}(1-\lvert q\rvert/Q)_+ c_{1q}c_{2q}$ converges to $\sum_{q\in\Z}c_{1q}\,c_{2q}$.
By Parseval's theorem, \[
\sum_{q\in\Z}c_{1q}\,c_{2q}= 2\pi \int_{-\pi}^\pi \overline{w_1^\ast(\lambda)}w_2^\ast(\lambda)d\lambda,
\]
which concludes the proof.
\end{proof}


\section*{Acknowledgments}
The author gratefully acknowledge to the two anonymous referees for their comments that led to substantial improvements in the article.
The author also wishes to express her thanks to Sophie Achard for fruitful exchanges.  Emmanuel Barbier and Guillaume Becq are also acknowledged for providing the data of the resting state fMRI on the rats. 

\section*{References}

\bibliographystyle{elsarticle-num}

\bibliography{biblio}

\begin{thebibliography}{10}
\expandafter\ifx\csname url\endcsname\relax
  \def\url#1{\texttt{#1}}\fi
\expandafter\ifx\csname urlprefix\endcsname\relax\def\urlprefix{URL }\fi
\expandafter\ifx\csname href\endcsname\relax
  \def\href#1#2{#2} \def\path#1{#1}\fi

\bibitem{PercivalWalden}
D.~B. Percival, A.~T. Walden, Wavelet methods for time series analysis, Vol.~4,
  Cambridge University Press, 2006.

\bibitem{Beran2016}
J.~Beran, Y.~Feng, S.~Ghosh, R.~Kulik, Long-Memory Processes., Springer, 2016.

\bibitem{Pipiras2017}
V.~Pipiras, M.~S. Taqqu, Long-range dependence and self-similarity, Vol.~45,
  Cambridge university press, 2017.

\bibitem{WhitcherJensen00}
B.~Whitcher, M.~J. Jensen, Wavelet estimation of a local long memory parameter,
  Exploration Geophysics 31~(1/2) (2000) 94--103.

\bibitem{Gencay}
R.~Gen{\c{c}}ay, F.~Sel{\c{c}}uk, B.~J. Whitcher, An introduction to wavelets
  and other filtering methods in finance and economics, Academic Press, 2001.

\bibitem{AchardGannaz}
S.~Achard, I.~Gannaz, Multivariate wavelet {W}hittle estimation in long-range
  dependence, {J}ournal of {T}ime {S}eries {A}nalysis 37~(4) (2016) 476--512.

\bibitem{Robinson95b}
P.~M. Robinson, Gaussian semiparametric estimation of long range dependence,
  The Annals of Statistics 23~(5) (1995) 1630--1661.

\bibitem{Lobato97}
I.~N. Lobato, Consistency of the averaged cross-periodogram in long memory
  series, Journal of Time Series Analysis 18~(2) (1997) 137--155.

\bibitem{SelaHurvich2008}
R.~J. Sela, C.~M. Hurvich, Computationaly efficient methods for two
  multivariate fractionnaly integrated models, Journal of Time Series Analysis
  30 (2008) 6.

\bibitem{KechagiasPipiras}
S.~Kechagias, V.~Pipiras, Definitions and representations of multivariate
  long-range dependent time series, Journal of Time Series Analysis 36~(1)
  (2015) 1--25.

\bibitem{DidierPipiras}
G.~Didier, V.~Pipiras, et~al., Integral representations and properties of
  operator fractional {B}rownian motions, Bernoulli 17~(1) (2011) 1--33.

\bibitem{maxim.2005.1}
V.~Maxim, L.~\c{S}endur, M.~J. Fadili, J.~Suckling, R.~Gould, R.~Howard, E.~T.
  Bullmore, Fractional {G}aussian noise, functional {MRI} and {A}lzheimer's
  disease, Neuro{I}mage 25 (2005) 141--158.

\bibitem{Achard_coma}
S.~Achard, C.~Delon-Martin, P.~E. V{\'e}rtes, F.~Renard, M.~Schenck,
  F.~Schneider, C.~Heinrich, S.~Kremer, E.~T. Bullmore, Hubs of brain
  functional networks are radically reorganized in comatose patients,
  Proceedings of the National Academy of Sciences 109~(50) (2012) 20608--20613.

\bibitem{Lobato99}
I.~N. Lobato, A semiparametric two-step estimator in a multivariate long memory
  model, Journal of Econometrics 90~(1) (1999) 129--153.

\bibitem{Shimotsu07}
K.~Shimotsu, Gaussian semiparametric estimation of multivariate fractionally
  integrated processes, Journal of Econometrics 137~(2) (2007) 277--310.

\bibitem{Nielsen11}
F.~S. Nielsen, Local {W}hittle estimation of multi-variate fractionally
  integrated processes, Journal of Time Series Analysis 32~(3) (2011) 317--335.

\bibitem{Baek2020}
C.~Baek, S.~Kechagias, V.~Pipiras, Asymptotics of bivariate local {W}hittle
  estimators with applications to fractal connectivity, Journal of Statistical
  Planning and Inference 205 (2020) 245--268.

\bibitem{Robinson08}
P.~M. Robinson, Multiple local {W}hittle estimation in stationary systems, The
  Annals of Statistics 36~(05) (2008) 2508--2530.

\bibitem{pipiras_lasso}
C.~Baek, S.~Kechagias, V.~Pipiras, Semiparametric, parametric, and possibly
  sparse models for multivariate long-range dependence, Proceedings Volume
  10394, Wavelets and Sparsity XVII (2017).

\bibitem{DukerPipiras}
M.-C. D{\"u}ker, V.~Pipiras, Asymptotic results for multivariate local
  {W}hittle estimation with applications, in: 2019 IEEE 8th International
  Workshop on Computational Advances in Multi-Sensor Adaptive Processing
  (CAMSAP), IEEE, 2019, pp. 584--588.

\bibitem{Moulines08Whittle}
E.~Moulines, F.~Roueff, M.~S. Taqqu, A wavelet {W}hittle estimator of the
  memory parameter of a nonstationary {G}aussian time series, The Annals of
  Statistics (2008) 1925--1956.

\bibitem{AchardGannaz_code}
S.~Achard, I.~Gannaz, Wavelet-based and {F}ourier-based multivariate {W}hittle
  estimation: multiwave, Journal of Statistical Software, Articles 89~(6)
  (2019) 1--31.

\bibitem{Roueff09asymptotic}
F.~Roueff, M.~S. Taqqu, Asymptotic normality of wavelet estimators of the
  memory parameter for linear processes, Journal of Time Series Analysis 30~(5)
  (2009) 534--558.

\bibitem{Achard08}
S.~Achard, D.~S. Bassett, A.~Meyer-Lindenberg, E.~Bullmore, Fractal
  connectivity of long-memory networks, Physical Review E 77~(3) (2008) 036104.

\bibitem{RobinsonDiscussion}
P.~M. Robinson, Robust covariance matrix estimation: {HAC} estimates with long
  memory/antipersistence correction, Econometric Theory 21~(01) (2005)
  171--180.

\bibitem{Moulines07SpectralDensity}
E.~Moulines, F.~Roueff, M.~S. Taqqu, On the spectral density of the wavelet
  coefficients of long-memory time series with application to the
  log-regression estimation of the memory parameter, Journal of Time Series
  Analysis 28~(2) (2007) 155--187.

\bibitem{Cohen}
A.~Cohen, Numerical analysis of wavelet methods, Elsevier, 2003.

\bibitem{Daubechies}
I.~Daubechies, Ten lectures on wavelets, Vol.~61, SIAM, 1992.

\bibitem{VelascoRobinson00}
C.~Velasco, P.~M. Robinson, Whittle pseudo-maximum likelihood estimation for
  nonstationary time series, Journal of the American Statistical Association
  95~(452) (2000) 1229--1243.

\bibitem{HurvichChen00}
C.~M. Hurvich, W.~W. Chen, An efficient taper for potentially overdifferenced
  long-memory time series, Journal of Time Series Analysis 21~(2) (2000)
  155--180.

\bibitem{Roueff09central}
F.~Roueff, M.~S. Taqqu, Central limit theorems for arrays of decimated linear
  processes, Stochastic processes and their applications 119~(9) (2009)
  3006--3041.

\bibitem{WhitcherGuttorpPercival}
B.~Whitcher, P.~Guttorp, D.~B. Percival, Wavelet analysis of covariance with
  application to atmospheric time series, Journal of Geophysical Research:
  Atmospheres 105~(D11) (2000) 14941--14962.

\bibitem{Anderson}
T.~W. Anderson, An introduction to multivariate statistical analysis, Tech.
  rep., Wiley New York (2003).

\bibitem{decorrelate}
P.~F. Craigmile, D.~B. Percival, Asymptotic decorrelation of between-scale
  wavelet coefficients, IEEE Transactions on Information Theory 51~(3) (2005)
  1039--1048.

\bibitem{spie_test}
S.~Achard, P.~Borgnat, I.~Gannaz, M.~Roux, Wavelet-based graph inference using
  multiple testing, in: Wavelets and Sparsity XVIII, Vol. 11138, International
  Society for Optics and Photonics, 2019, p. 1113811.

\bibitem{AbryVeitch98}
P.~Abry, D.~Veitch, Wavelet analysis of long-range-dependent traffic,
  Information Theory, IEEE Transactions on 44~(1) (1998) 2--15.

\bibitem{avignon}
S.~Achard, I.~Gannaz, Wavelet {W}hittle estimation in multivariate time series
  models: application to {fMRI} data, in: Conference of the International
  Society for Non-Parametric Statistics, Springer, 2016, pp. 271--285.

\bibitem{multiple}
S.~Achard, P.~Borgnat, I.~Gannaz, Asymptotic control of {FWER} under {G}aussian
  assumption: application to correlation tests, arXiv preprint arXiv:2007.00909
  (2020).

\bibitem{guillaume2020functional}
G.~Becq, T.~Habet, N.~Collomb, M.~Faucher, C.~Delon-Martin, V.~Coizet,
  S.~Achard, E.~L. Barbier, Functional connectivity is preserved but
  reorganized across several anesthetic regimes, NeuroImage 219 (2020) 116945.

\bibitem{becq_10.1088/1741-2552/ab9fec}
G.~Becq, E.~Barbier, S.~Achard, Brain networks of rats under anesthesia using
  resting-state f{MRI}: comparison with dead rats, random noise and generative
  models of networks, Journal of Neural Engineering (2020).

\bibitem{Pawela2008}
C.~P. Pawela, B.~B. Biswal, Y.~R. Cho, D.~S. Kao, R.~Li, S.~R. Jones, M.~L.
  Schulte, H.~S. Matloub, A.~G. Hudetz, J.~S. Hyde, Resting-state functional
  connectivity of the rat brain, Magnetic Resonance in Medicine 59~(5) (2008)
  1021--1029.

\bibitem{fMRInoise}
T.~T. Liu, Noise contributions to the f{MRI} signal: An overview, NeuroImage
  143 (2016) 141--151.

\bibitem{aggregation}
R.~Leipus, A.~Philippe, D.~Puplinskaite, D.~Surgailis, Aggregation and long
  memory: recent developments, Journal of the Indian Statistical Association
  52~(1) (2013) 71--101.

\bibitem{Billingsley}
P.~Billingsley, Convergence of probability measures, John Wiley \& Sons, 1999.

\end{thebibliography}

\end{document}